\documentclass[11pt]{amsart}

\usepackage{amssymb,amsfonts}
\usepackage[all,arc]{xy}
\usepackage{enumerate}
\usepackage{mathrsfs}
\usepackage{eufrak}
\usepackage{setspace}
\usepackage{color}
\usepackage{graphicx}
\usepackage{pdfpages}
\usepackage{enumitem}



\setlength{\textheight}{225mm} 
\setlength{\topmargin}{0.16cm}
\setlength{\textwidth}{165mm} 
\setlength{\oddsidemargin}{-0.2cm}
\setlength{\evensidemargin}{-0.2cm}


\setlength{\parindent}{0pt}
\newtheorem{theorem}{Theorem}[section]
\newtheorem{proposition}[theorem]{Proposition}

\newtheorem{corollary}[theorem]{Corollary}
\newtheorem{definition}[theorem]{Definition}

\newtheorem{example}[theorem]{Example}

\theoremstyle{remark}
\newtheorem{remark}[theorem]{Remark}

\renewenvironment{proof}{{\noindent\bf Proof.}}{\hfill $\Box$\par\vskip3mm}

\newcommand{\Aa}{\mathcal{A}}
\newcommand{\Bb}{\mathcal{B}}
\newcommand{\Cc}{\mathcal{C}}

\newcommand{\Ff}{\mathcal{F}}

\def\AA{{\mathbb A}}



\theoremstyle{definition}

\theoremstyle{remark}

\makeatletter
\let\c@equation\c@thm
\makeatother
\numberwithin{equation}{section}

\bibliographystyle{plain}


\title{$n$-quivers and A Universal Investigation of $n$-representations of Quivers }
\author{ADNAN H. ABDULWAHID}
\date{}

\begin{document}

\begin{abstract}
\noindent  We have two parallel goals of this paper. First, we investigate and construct cofree coalgebras over  $n$-representations of quivers,  limits and colimits of $n$-representations of quivers, and limits and colimits of coalgebras in the monoidal categories of $n$-representations of quivers.  \\
Second, we introduce a generalization for quivers and show that this generalization can be seen as essentially the same as $n$-representations of quivers. This is significantly important because it allows us to build a new quiver $\mathscr{Q}_{\!_{(\mathit{Q}_1, \mathit{Q}_2,...,  \mathit{Q}_n)}}$, called $n$-quiver, from any given quivers $\mathit{Q}_1,\mathit{Q}_2,...,\mathit{Q}_n$, and enable us to identify each category $Rep_k(\mathit{Q}_j)$ of representations of a quiver $\mathit{Q}_j$ as a full subcategory of the category $Rep_k(\mathscr{Q}_{\!_{(\mathit{Q}_1, \mathit{Q}_2,...,  \mathit{Q}_n)}})$ of representations of $\mathscr{Q}_{\!_{(\mathit{Q}_1, \mathit{Q}_2,...,  \mathit{Q}_n)}}$ for every $j \in \{1,\,\,2,\,\,..., \,\,n\}$. \\  
\end{abstract}

\thanks{2016 \textit{Mathematics Subject Classifications}. 20G05, 47A67, 06B15, 16Gxx, 18D10, 19D23, 18Axx, 18A35, 16T15}
\date{}
\keywords{Quiver, representation, birepresentation, $n$-representation, $n$-quiver, coalgebra, limit, colimit, complete,  cocomplete, free, cofree, Monoidal category, generator, 
co-wellpowered, abelian category}

\maketitle


\noindent

\section{Introduction}
\label{intro}

The quiver representations theory has been developed exponentially since it was introduced in \cite{Gabriel} due to Gabriel in 1972.
Very recently, it has significantly invaded a large area of substantial studies, and it has been seen as a pivot source of inspiration for many theories. Due to its combinatorial flavor,
it has become as a vital subject with germane connections to  associative algebra, combinatorics, algebraic topology, algebraic geometry, quantum groups, Hopf algebras, tensor categories. As it is well-known, there is a forgetful functor, which has a left adjoint,  from the category of small categories to the category of quivers. I turns out that the quiver representations theory supports a very positive connection between combinatorics and category theory, and this allows to substantially distinctive theories to collaboratively work together as a team to reshape many theories and build them in terms of modern techniques. \\

As an important strategy for developing quiver representations theory, we need to carefully and systematically generalize the notions of quivers and their representations. Our main aim is to introduce  generalizations for quivers and their representations. Unlike one might expect, our generalization approach is to start with introducing a generalization for representations of quivers. Then we investigate universal properties and constructions for $n$-representations of quivers. Finally, we  introduce a generalization for quivers and prove that this generalization and our previous generalization approach are essentially the same in the sense of a categorical point of view. \\

The first part of our aim has already been done in \cite{Abdulwahid1} by introducing the concept of $n$-representations of quivers, providing concrete examples of such notions, establishing categories of $n$-representations of quivers, and showing that these categories are abelian. For any $n \geq 2$,  an $n$-representation of an $n$-tuple $(\mathit{Q}_1,\mathit{Q}_2,...,\mathit{Q}_n)$ of quivers is a generalization of a representation of $\mathit{Q}_j$, for any $j \in \{1,\,\,2,\,\,..., \,\,n\}$. Further, the definition of $n$-representations of quivers allows us to identify each category $Rep_k(\mathit{Q}_j)$ of representations of a quiver $\mathit{Q}_j$ as a full subcategory of the category $Rep_{\!_{(\mathit{Q}_1,\mathit{Q}_2,...,\mathit{Q}_n)}}$ of $n$-representations of $(\mathit{Q}_1,\mathit{Q}_2,...,\mathit{Q}_n)$ for every $j \in \{1,\,\,2,\,\,..., \,\,n\}$. \\

This paper is mainly devoted to two parallel goals. The fist one is to
investigate cofree coalgebras over  $n$-representations of quivers,  limits and colimits of $n$-representations of quivers, and limits and colimits of coalgebras in the categories of $n$-representations of quivers. Moreover, we explicitly construct cofree coalgebras, limits, colimits in these (monoidal) categories. \\
The other goal of this paper is to introduce a generalization for quivers and prove that this generalization can be recast into $n$-representations of quivers. This is crucially important since for any given quivers $\mathit{Q}_1,\mathit{Q}_2,...,\mathit{Q}_n$, it allows us to build a new quiver $\mathscr{Q}_{\!_{(\mathit{Q}_1, \mathit{Q}_2,...,  \mathit{Q}_n)}}$, called $n$-quiver, by which we are able to view each category $Rep_k(\mathit{Q}_j)$ of representations of a quiver $\mathit{Q}_j$ as a full subcategory of the category $Rep_k(\mathscr{Q}_{\!_{(\mathit{Q}_1, \mathit{Q}_2,...,  \mathit{Q}_n)}})$ of representations of $\mathscr{Q}_{\!_{(\mathit{Q}_1, \mathit{Q}_2,...,  \mathit{Q}_n)}}$ for every $j \in \{1,\,\,2,\,\,..., \,\,n\}$.\\

To formulate our goal concerned with a universal investigation, we need to recall some categorical definitions.\\   

Let $\mathfrak{X}$ be a category. A \textit{concrete category} over $\mathfrak{X}$ is a pair $(\mathfrak{A},\mathfrak{U})$, where $\mathfrak{A}$ is a category and $\mathfrak{U}: \mathfrak{A} \rightarrow \mathfrak{X}$ is a faithful functor \cite[p. 61]{Adamek}. Let $(\mathfrak{A},\mathfrak{U})$ be a concrete category over $\mathfrak{X}$. Following \cite[p. 140-143]{Adamek}, a \textit{free object} over $\mathfrak{X}$-object $X$ is an $\mathfrak{A}$-object $A$ such that there exists a \textit{universal arrow}  $(A,u)$ over $X$; that is, $u:X \rightarrow \mathfrak{U}A$ such that for every arrow $f:X \rightarrow \mathfrak{U}B$, there exists a unique morphism $f':A \rightarrow B$ in $\mathfrak{A}$ such that $\mathfrak{U}f' u=f$. We also say that $(A,u)$ is the free object over $X$.  A concrete category  $(\mathfrak{A},\mathfrak{U})$ over $\mathfrak{X}$ is said to \textit{have free objects} provided that for each $\mathfrak{X}$-object $X$, there exists a universal arrow over $X$. For example, the category $Vect_{\mathbb{K}}$ of vector spaces over a field $\mathbb{K}$ has free objects. So do the category \textbf{Top} of topological spaces and the category of \textbf{Grp} of groups. However, some interesting categories do not have free objects \cite[p. 142]{Adamek}). \\

Dually, \textit{co-universal arrows}, \textit{cofree objects} and categories that \textit{have cofree objects} can be defined. For the basic concepts of concrete categories, free objects, and cofree objects, we refer the reader to \cite[p. 138-155]{Kilp}. \\

It turns out that a  concrete $(\mathfrak{A},\mathfrak{U})$ over $\mathfrak{X}$ has (co)free objects if and only if the functor that builds up (co)free object is a (right) left adjoint to the faithful functor $\mathfrak{U}: \mathfrak{A} \rightarrow \mathfrak{X}$.\\

Therefore, our goal involved with a universal investigation can explicitly be formulated as follows. Let    $\mathscr{U}_{n}: CoAlg(Rep_{\!_{(\mathit{Q}_1,\mathit{Q}_2,...,\mathit{Q}_n)}}) \rightarrow Rep_{\!_{(\mathit{Q}_1,\mathit{Q}_2,...,\mathit{Q}_n)}}$ be the forgetful functor from the category of coalgebras in the category $Rep_{\!_{(\mathit{Q}_1,\mathit{Q}_2,...,\mathit{Q}_n)}}$ to the category $Rep_{\!_{(\mathit{Q}_1,\mathit{Q}_2,...,\mathit{Q}_n)}}$. Does  $\mathscr{U}_{n}$ have a right adjoint?   \\

An expected strategy for the answer of this question is to use  
the dual of Special Adjoint Functor Theorem (D-SAFT). \\

We start our inspection by showing that the limits and colimits in the categories of $n$-representations of quivers can be inherited from their respective limits and colimits of representations of quivers. Then we introduce the notion of $n$-quivers which suggests that there is an equivalence of categories between the categories of $n$-representations of quivers and the categories of representations of $n$-quivers. Eventually, we show that cofree coalgebras exist in the monoidal categories of $n$-representations of quivers and describe them in terms of cofree coalgebras in the monoidal categories of  representations of quivers.\\
 
The sections of this paper can be summarized as follows.\\

In Section 2, we give some detailed background on quiver representations and few categorical notions that we need for the next sections. As a preliminary step, we briefly discuss the basics about $n$-representations of quivers. For more details and concrete examples of $n$-representations of quivers, we refer the reader to \cite{Abdulwahid1}.\\ 

In Section 3,  we briefly discuss the basics about $n$-representations of quivers. \\ 

In Section 4, we start with an investigation of limits of birepresentations ($2$-representations of quivers) and inductively extend our results to limits of $n$-representations of quivers. Similarly to the case of limits, we investigate and construct colimits of $n$-representations of quivers. \\

In Section 5, we introduce the notion of $2$-quivers and inductively generalize the notion for of $n$-quivers. We explicitly give concrete examples of $n$-quivers and representations of $n$-quivers. Further, we show that the concept of $n$-quivers gives rise to identify 
the categories of $n$-representations of quivers and the categories of representations of $n$-quivers as essentially the same. This  identification allows one to have an explicit description for the generators of the categories of $n$-representations of quivers and characterize many properties of $n$-representations of quivers. \\
Finally, we investigate cofree coalgebras in the monoidal categories of $n$-representations of quivers. We also construct them in terms of colimits and generators and show that cofree coalgebras in these monoidal categories can explicitly be obtained from cofree coalgebras in the monoidal categories of quiver representations. \\

\section{\textbf{Preliminaries}}\label{s.p}
Throughout this paper $k$ is an algebraically closed field, $n \geq 2$,  and $\mathit{Q}, \,\, \mathit{Q}', \,\, \mathit{Q}_1, \,\, \mathit{Q}_2, ..., \,\, \mathit{Q}_n$ are  quivers. We also denote $k\mathit{Q}$ the path algebra of $\mathit{Q}$. Unless otherwise specified, we will consider only finite, connected, and acyclic quivers. \\
Let $\mathcal{A}$ be a (locally small) category and $A$, $B$  objects in $\mathcal{A}$. We denote by $\mathcal{A} (A,B)$ the set of all morphisms from $A$ to $B$. \\
Let $\Aa$, $\Bb$ be categories. Following \cite[p. 74]{McLarty},  the  \textbf{product category }$\Aa \times \Bb$ is the category whose objects are all pairs of the form $(A,B)$, where $A$ is an object of $\Aa$ and $B$ an object of $\Bb$. An arrow is a pair $(f,g): (A,B) \rightarrow (A',B')$, where $f: A \rightarrow A'$ is an arrow of $\Aa$ and $g: B \rightarrow B'$ is an arrow of $\Bb$. The identity arrow for $\Aa \times \Bb$ is $(id_{\!_{A}},id_{\!_{B}})$ and composition is defined component-wise, so $(f,g)(f',g') = (ff',gg')$.There is a functor $P_1:\Aa \times \Bb \rightarrow \AA$ defined by $P_1(A,B)= A $
and $P_1 (f,g) =f$, and an obvious $P_2$.  
We will need the following.

\begin{theorem} \cite[p. 148]{Freyd} \label{p.SAFT} 
If $\mathfrak{A}$ is cocomplete, co-wellpowered and with a generating set, then every cocontinuous functor from $\mathfrak{A}$ to a locally small category has a right adjoint.
\end{theorem}

\begin{proposition}\label{p.cowp}
Let $(\Cc,\otimes,I)$ be a monoidal category, $CoMon(\Cc)$ be the category of comonoids of $\Cc$ and $U:CoMon(\Cc)\rightarrow \Cc$ be the forgetful functor. \\
(i) If $\Cc$ is cocomplete, then $CoMon(\Cc)$ is cocomplete and $U$ preserves colimits. \\
(ii) If furthermore $\Cc$ is co-wellpowered, then so is $CoMon(\Cc)$. \\
\end{proposition}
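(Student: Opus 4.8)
The plan is to build colimits in $CoMon(\Cc)$ directly from colimits in $\Cc$, exploiting the fact that the comultiplication and counit of a colimit of comonoids are forced by the universal property. Concretely, given a small diagram $D: \Jj \to CoMon(\Cc)$, let $C = \mathrm{colim}\, (U \circ D)$ in $\Cc$, with colimit cocone $(\iota_j: D(j) \to C)_{j \in \Jj}$. First I would equip $C$ with a counit: for each $j$ the counits $\varepsilon_{D(j)}: D(j) \to I$ form a cocone over $U \circ D$ (they commute with the structure maps of the diagram, since each arrow in the diagram is a comonoid morphism), so they induce a unique $\varepsilon_C: C \to I$. Next I would produce the comultiplication. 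Since $\otimes$ need not preserve colimits in each variable separately for a general monoidal category, this is the one genuinely delicate point; but in fact one does not need that. Instead, observe that the maps $(\iota_j \otimes \iota_j)\circ \Delta_{D(j)}: D(j) \to C \otimes C$ form a cocone over $U\circ D$, so they induce a unique $\Delta_C : C \to C\otimes C$ with $\Delta_C \circ \iota_j = (\iota_j\otimes\iota_j)\circ\Delta_{D(j)}$. Then I would check coassociativity and counitality of $(C,\Delta_C,\varepsilon_C)$ by precomposing each required identity with every $\iota_j$: after precomposition both sides reduce, using naturality of the associator/unitors and the corresponding identities for $D(j)$, to the same morphism, and since the $\iota_j$ are jointly epic the identities hold. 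Finally, each $\iota_j$ is then a comonoid morphism by construction, and given any comonoid $(E,\Delta_E,\varepsilon_E)$ with a cocone of comonoid morphisms $f_j : D(j) \to E$, the induced $\Cc$-morphism $f: C \to E$ is automatically a comonoid morphism (again check $\Delta_E f = (f\otimes f)\Delta_C$ and $\varepsilon_E f = \varepsilon_C$ after precomposing with the jointly epic $\iota_j$). This gives the colimit in $CoMon(\Cc)$ and shows $U$ preserves it, proving (i).

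For (ii), I would use the standard fact that quotient objects (regular/extremal epis) in $CoMon(\Cc)$ are computed as in $\Cc$ once one knows $U$ preserves colimits, hence preserves epimorphisms: if $\Cc$ is co-wellpowered, then for any comonoid $C$ there is only a set of isomorphism classes of quotient objects of $UC$ in $\Cc$, and since $U$ is faithful and reflects such data, the quotient comonoids of $C$ are indexed by (a subset of) that set, so $CoMon(\Cc)$ is co-wellpowered as well. One should be slightly careful about which flavour of "co-wellpowered'' is in force — with respect to all epis, or regular/extremal epis — and match it to what Theorem \ref{p.SAFT} requires; the cleanest route is to note that $U$ preserves colimits, hence preserves coequalizers and epimorphisms, and that a comonoid morphism which is epi in $\Cc$ remains epi in $CoMon(\Cc)$, so a generating set of quotients upstairs bounds the quotients downstairs.

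The main obstacle, and the step I would spend the most care on, is the construction of $\Delta_C$ and the verification of coassociativity: because a general monoidal category's tensor functor is not assumed cocontinuous, one must avoid identifying $C \otimes C$ with a colimit and instead work purely through the jointly-epic family $(\iota_j)$ and naturality of the coherence isomorphisms. Everything else is a routine diagram chase once that principle is in place. (This is, of course, the comonoid dual of the classical fact that $Mon(\Cc) \to \Cc$ creates limits; here colimits of comonoids are created by $U$ only because $U$ has the lifting property that makes the structure maps unique, which is exactly what the argument above exploits.)
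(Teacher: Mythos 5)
Your part (i) is correct: constructing $\varepsilon_C$ and $\Delta_C$ through the jointly epic colimit cocone, verifying coassociativity and counitality by precomposition with the $\iota_j$ together with naturality of the coherence isomorphisms, and lifting the universal property along the faithful $U$ is exactly the standard argument that $U:CoMon(\Cc)\rightarrow \Cc$ creates (hence preserves) small colimits, and, as you note, it nowhere needs $\otimes$ to be cocontinuous. The paper states Proposition \ref{p.cowp} in the preliminaries without any proof, treating it as a known fact, so there is no in-text argument to compare routes with; your outline is a correct way to substantiate it.

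Part (ii), however, contains a concrete gap as written. You assert that the quotient comonoids of a comonoid $C$ are ``indexed by (a subset of)'' the set of quotients of $UC$ in $\Cc$; but the assignment sending a comonoid quotient $q:C\rightarrow D$ to its underlying $\Cc$-quotient $Uq$ need not be injective on isomorphism classes, since two non-isomorphic comonoid quotients can have the same underlying quotient object carrying different comultiplications and counits. The repair is to note that the fibers of this assignment are small: if $e:UC\rightarrow X$ represents a $\Cc$-quotient, then any comonoid quotient of $C$ lying over it is isomorphic (as a quotient in $CoMon(\Cc)$) to one of the form $(X,\delta,\epsilon)$ with $\delta\in\Cc(X,X\otimes X)$ and $\epsilon\in\Cc(X,I)$, and these form a set because $\Cc$ is locally small. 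Combining this with the two facts you do invoke correctly --- co-wellpoweredness of $\Cc$ gives a set of underlying quotients, and $U$ sends epimorphisms of $CoMon(\Cc)$ to epimorphisms of $\Cc$ because, by (i), $U$ preserves the cokernel-pair pushout that detects epis --- yields a set covering all isomorphism classes of comonoid quotients of $C$, hence co-wellpoweredness of $CoMon(\Cc)$. (Incidentally, the remark that a comonoid morphism which is epi in $\Cc$ remains epi in $CoMon(\Cc)$ is the easy reflection direction and is not the one needed here.)
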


For the fundamental concepts of category theory, we refer to \cite{Leinster1}, \cite{Mac Lane1}, \cite{Awodey}, \cite{Rotman}, \cite{Pareigis}, or  \cite{Mitchell}.\\

Following \cite{Schiffler}, a \textbf{quiver} $\mathit{Q} = (\mathrm{Q}_{\!_{0}}, \mathrm{Q}_{\!_{1}}, s,t)$  consists of
\begin{itemize}
\item $\mathrm{Q}_{\!_{0}}$ a set of vertices,
\item $\mathrm{Q}_{\!_{1}}$ a set of arrows,
\item $s:\mathrm{Q}_{\!_{1}} \rightarrow \mathrm{Q}_{\!_{0}}$ a map from arrows to vertices, mapping an arrow to its starting
point,
\item $t:\mathrm{Q}_{\!_{1}} \rightarrow \mathrm{Q}_{\!_{0}}$ a map from arrows to vertices, mapping an arrow to its terminal point.
\end{itemize}
We will represent an element $\alpha \in \mathrm{Q}_{\!_{1}}$ by drawing an arrow from its starting point $s(\alpha)$ to its endpoint $t(\alpha)$ as follows:
$s(\alpha) \xrightarrow{\alpha} t(\alpha)$.\\
A \textbf{representation} $M = (M_i,\varphi_{\alpha})_{i \in \mathrm{Q}_{\!_{0}}, \alpha \in Q{\!_{1}}}$ of a quiver $Q$ is a collection of $k$-vector spaces $M_i$ one for each vertex $i \in \mathrm{Q}_{\!_{0}}$, and a collection of $k$-linear maps $\varphi_{\alpha}: M_{s(\alpha)} \rightarrow M_{t(\alpha)}$ one for each arrow $\alpha \in \mathrm{Q}_{\!_{1}}$. \\
A representation $M$ is called \textbf{finite-dimensional} if each vector space Mi is finite-dimensional.\\
Let $Q$ be a quiver and let $M = (M_i,\varphi_{\alpha})$, $M' = (M'_i,\varphi'_{\alpha})$  be two representations of $Q$. A \textbf{morphism} of representations $f : M \rightarrow M'$ is a collection $(f_i)_{i \in Q{\!_{0}}}$ of $k$-linear maps $f_i : M_i \rightarrow M'_i$
such that for each arrow $s(\alpha) \xrightarrow{\alpha} t(\alpha)$ in 
 $\mathrm{Q}_{\!_{1}}$ the diagram 
 \begin{equation} \label{diag.eq01}
\xymatrix{
M_{s(\alpha)} \ar[rr]^{\phi_{\alpha}} \ar[d]_{f_{s(\alpha)}} && M_{t(\alpha)} \ar[d]^{f_{t(\alpha)}}\\
M'_{s(\alpha)} \ar[rr]_(.5){\phi'_{\alpha}} && M'_{t(\alpha)}
} 
\end{equation}
commutes. \\
A morphism of representations $f = (f_i): M \rightarrow M'$  is an equivalence if each $f_i$ is bijective. The class of all representations that are isomorphic to a given representation $M$ is called the \textbf{isoclass} of $M$.\\
This gives rise to define a category $Rep_k(\mathit{Q})$ of $k$-linear representations of  $\mathit{Q}$. We denote by $rep_k(\mathit{Q})$ the full  subcategory of $Rep_k(\mathit{Q})$ consisting of the finite dimensional representations.

Given two representations $M = (M_i,\phi_{\alpha})$ and  $M' = (M'_i,\phi'_{\alpha'})$ of $\mathit{Q}$, the representation

\begin{equation} \label{diag.eq001}
M \oplus M' \,\,\ = \,\,\ (M_i \oplus M'_i,\begin{bmatrix}
       \phi_{\alpha} & 0            \\[0.3em]
       0 & \phi'_{\alpha}         \\[0.3em] \end{bmatrix})
\end{equation}
is the \textbf{direct sum} of $M$ and $M'$ in $Rep_k(\mathit{Q})$ \cite[p. 71]{Assem}. \\
A nonzero representation of a quiver $\mathit{Q}$ is said to be \textbf{indecomposable} if it is not isomorphic to a direct sum of two nonzero representations \cite[p. 21]{Etingof1}. \\

Following \cite[p. 114]{Schiffler}, the \textbf{path algebra} $k\mathit{Q}$ of a quiver $\mathit{Q}$ is the algebra with basis the set of all paths in the quiver $\mathit{Q}$ and with multiplication defined on two basis elements $c, c'$  by

\[
    c.c' = 
\begin{cases}
    cc',& \text{if } s(c') =  t(c)\\
    0,              & \text{otherwise.}
\end{cases}
\]

We will need the following propositions.

\begin{proposition} \label{p.1} \cite[p. 70]{Assem}
Let $\mathit{Q}$ be a finite quiver. Then $Rep_k(\mathit{Q})$ and $rep_k(\mathit{Q})$
are $k$-linear abelian categories.
\end{proposition}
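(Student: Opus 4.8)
The statement to prove is Proposition~\ref{p.1}: for a finite quiver $\mathit{Q}$, the categories $Rep_k(\mathit{Q})$ and $rep_k(\mathit{Q})$ are $k$-linear abelian categories.

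The plan is to identify $Rep_k(\mathit{Q})$ with a more familiar category and transport the abelian structure, or else verify the Freyd--Mitchell axioms directly; the cleanest route is the equivalence $Rep_k(\mathit{Q}) \simeq \mathrm{Mod}\text{-}k\mathit{Q}$ between representations of $\mathit{Q}$ and (left, say) modules over the path algebra $k\mathit{Q}$, under which finite-dimensional representations correspond to finite-dimensional modules. First I would recall the functors realizing this equivalence: to a representation $M = (M_i,\varphi_\alpha)$ one associates the vector space $\bigoplus_{i\in \mathrm{Q}_0} M_i$ with $k\mathit{Q}$-action in which the trivial path $e_i$ acts as the projection onto $M_i$ and an arrow $\alpha$ acts via $\varphi_\alpha$ followed by the inclusion; conversely, given a $k\mathit{Q}$-module $N$, one sets $N_i = e_i N$ and lets $\varphi_\alpha$ be multiplication by $\alpha$. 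I would check that these are mutually quasi-inverse and that a morphism of representations, i.e. a commuting family $(f_i)$ as in diagram~\eqref{diag.eq01}, corresponds exactly to a $k\mathit{Q}$-module homomorphism, so the correspondence is an equivalence of categories; it is visibly $k$-linear since on hom-sets it is a $k$-linear bijection and composition is preserved.

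Next I would invoke the standard fact that for any ring $R$ the category $R\text{-}\mathrm{Mod}$ is abelian, and that the full subcategory of finitely generated (equivalently, when $R$ is a finite-dimensional algebra, finite-dimensional) modules is an abelian subcategory because it is closed under kernels, cokernels, finite direct sums and extensions inside $R\text{-}\mathrm{Mod}$. Since $\mathit{Q}$ is finite, $k\mathit{Q}$ is a finite-dimensional $k$-algebra (or at worst a finite-dimensional algebra when $\mathit{Q}$ is also acyclic, as assumed in the paper; for the general finite case one still has that $Rep_k(\mathit{Q})$ is abelian as a module category and $rep_k(\mathit{Q})$ is abelian as the subcategory of finite-length modules). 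Transporting along the equivalence then gives that $Rep_k(\mathit{Q})$ is abelian; because the equivalence identifies finite-dimensional representations with finite-dimensional modules and equivalences preserve and reflect kernels, cokernels, biproducts and the relevant isomorphisms $\coim \to \im$, the subcategory $rep_k(\mathit{Q})$ is abelian as well. The $k$-linearity is inherited in the same way.

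Alternatively, and perhaps more in the self-contained spirit of the excerpt, one can argue directly: $Rep_k(\mathit{Q})$ is evidently $k$-linear with the obvious pointwise vector space structure on hom-sets; the zero representation is a zero object; the direct sum in~\eqref{diag.eq001} provides biproducts; the kernel of a morphism $f=(f_i)$ is the subrepresentation $(\Ker f_i, \varphi_\alpha|)$ — here one must note that $\varphi_\alpha$ restricts to $\Ker f_{s(\alpha)} \to \Ker f_{t(\alpha)}$ precisely because diagram~\eqref{diag.eq01} commutes — and dually the cokernel is $(\Coker f_i, \bar\varphi_\alpha)$; finally every monomorphism is a kernel and every epimorphism a cokernel, which again reduces to the corresponding pointwise statement in $Vect_k$ together with the compatibility of the maps $\varphi_\alpha$. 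The same formulas work verbatim in $rep_k(\mathit{Q})$ since subspaces and quotients of finite-dimensional spaces are finite-dimensional. I expect the only mildly delicate point, in either approach, to be the bookkeeping that shows kernels, cokernels, images and coimages computed vertexwise are genuinely subobjects in the category of representations — that is, that the structure maps $\varphi_\alpha$ descend or restrict correctly — but this is forced in each case by the commutativity of~\eqref{diag.eq01}, so no real obstacle arises; the heart of the matter is simply that all the abelian-category structure is inherited pointwise from $Vect_k$.
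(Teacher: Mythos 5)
Your proof is correct and follows the standard argument: the paper itself states this proposition without proof, citing Assem--Simson--Skowro\'nski, and the proof given there is precisely your direct vertexwise verification, with kernels, cokernels and biproducts computed in $Vect_k$ and the structure maps restricting or descending because of the commuting squares defining a morphism of representations. Your alternative route through $\mathrm{Mod}\,k\mathit{Q}$ is likewise consistent with the paper's Proposition~\ref{p.Equiv.Rep.Mod}, and you correctly flag the only caveat, namely that identifying $rep_k(\mathit{Q})$ with finitely generated modules uses acyclicity of the finite quiver (in general one should take finite-dimensional, i.e.\ finite-length, modules).
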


\begin{proposition} \label{p.Equiv.Rep.Mod} \cite[p. 74]{Assem}
Let $\mathit{Q}$ be a finite, connected, and acyclic quiver. There
exists an equivalence of categories $Mod \, k\mathit{Q} \simeq Rep_k(\mathit{Q})$ that restricts to an equivalence $mod \, k\mathit{Q} \simeq rep_k(\mathit{Q})$, where $k\mathit{Q}$ is the path algebra of $\mathit{Q}$, $Mod \, k\mathit{Q}$ denotes the category of right $k\mathit{Q}$-modules, and  $mod \, k\mathit{Q}$ denotes the full subcategory of $Mod \, k\mathit{Q}$ consisting of the finitely generated right $k\mathit{Q}$-modules.
\end{proposition}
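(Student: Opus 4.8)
The plan is to exhibit an explicit pair of quasi-inverse functors between $Rep_k(\mathit{Q})$ and $Mod\, k\mathit{Q}$, built from the complete set of orthogonal idempotents that the finiteness of $\mathit{Q}$ provides. Since $\mathrm{Q}_{\!_{0}}$ is finite, the trivial paths $e_i$ ($i \in \mathrm{Q}_{\!_{0}}$) form a complete set of orthogonal idempotents in $k\mathit{Q}$, i.e. $e_i e_j = \delta_{ij} e_i$ and $\sum_{i} e_i = 1$, and for an arrow $\alpha$ one has $e_{s(\alpha)}\,\alpha\,e_{t(\alpha)} = \alpha$ with the composition convention for $k\mathit{Q}$ fixed in Section~\ref{s.p}. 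First I would define a functor $\mathfrak{F}: Rep_k(\mathit{Q}) \to Mod\, k\mathit{Q}$ by sending a representation $M = (M_i,\varphi_{\alpha})$ to the $k$-space $\bigoplus_{i \in \mathrm{Q}_{\!_{0}}} M_i$, equipped with the right $k\mathit{Q}$-action in which $e_i$ acts as the projection onto the summand $M_i$ and, for an arrow $\alpha$ and $m$ in the summand $M_i$, one sets $m\cdot\alpha := \varphi_{\alpha}(m) \in M_{t(\alpha)}$ if $i = s(\alpha)$ and $m\cdot\alpha := 0$ otherwise; extending multiplicatively along paths gives a well-defined module structure. A morphism $f = (f_i): M \to M'$ is sent to $\bigoplus_i f_i$, which is $k\mathit{Q}$-linear exactly because every square \eqref{diag.eq01} commutes.

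Conversely I would define $\mathfrak{G}: Mod\, k\mathit{Q} \to Rep_k(\mathit{Q})$ by sending a right module $N$ to the representation with spaces $N e_i$ ($i \in \mathrm{Q}_{\!_{0}}$) and, for an arrow $\alpha$, the $k$-linear map $\psi_{\alpha}: N e_{s(\alpha)} \to N e_{t(\alpha)}$ given by right multiplication by $\alpha$ (well defined since $e_{s(\alpha)}\alpha = \alpha = \alpha e_{t(\alpha)}$). A module homomorphism $g: N \to N'$ is sent to the family of restrictions $g|_{N e_i}: N e_i \to N' e_i$, which is a morphism of representations by $k\mathit{Q}$-linearity of $g$. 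Functoriality of both $\mathfrak{F}$ and $\mathfrak{G}$ is immediate.

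The heart of the argument is to produce natural isomorphisms $\mathfrak{G}\mathfrak{F} \cong \mathrm{id}_{Rep_k(\mathit{Q})}$ and $\mathfrak{F}\mathfrak{G} \cong \mathrm{id}_{Mod\, k\mathit{Q}}$. For the first, $\mathfrak{G}\mathfrak{F}(M)$ has $i$-th space $\big(\bigoplus_j M_j\big) e_i = M_i$ and, tracing the definitions, its arrow maps are exactly the $\varphi_{\alpha}$, so this composite is (canonically) the identity. For the second, the key input is the idempotent decomposition: since $\{e_i\}$ is complete and orthogonal, the multiplication maps $N e_i \hookrightarrow N$ assemble into an isomorphism $\bigoplus_i N e_i \xrightarrow{\sim} N$ of right $k\mathit{Q}$-modules, natural in $N$, and one checks it intertwines the arrow actions. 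I expect the only real friction to be bookkeeping: one must keep the left/right and path-composition conventions consistent throughout so that the action of an arrow $\alpha$ on a module and the map $\varphi_{\alpha}$ correspond; once the convention is pinned down, the remaining verifications are routine diagram chases using the relations in $k\mathit{Q}$.

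Finally I would deduce the restricted equivalence $mod\, k\mathit{Q} \simeq rep_k(\mathit{Q})$. Under $\mathfrak{F}$ one has $\dim_k \mathfrak{F}(M) = \sum_{i} \dim_k M_i$, so $M$ lies in $rep_k(\mathit{Q})$ if and only if $\mathfrak{F}(M)$ is finite-dimensional over $k$. Because $\mathit{Q}$ is finite and acyclic it has only finitely many paths, hence $k\mathit{Q}$ is a finite-dimensional $k$-algebra, and a right $k\mathit{Q}$-module is finitely generated precisely when it is finite-dimensional over $k$. Therefore $\mathfrak{F}$ and $\mathfrak{G}$ carry $rep_k(\mathit{Q})$ and $mod\, k\mathit{Q}$ to each other and restrict to mutually quasi-inverse equivalences, completing the proof. (Connectedness of $\mathit{Q}$ is not used here; it is only a standing hypothesis of the paper.)
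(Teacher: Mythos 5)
Your proposal is correct, and it is essentially the standard argument that the paper itself does not spell out: Proposition~\ref{p.Equiv.Rep.Mod} is quoted from the cited reference (Assem--Simson--Skowro\'nski, Theorem~III.1.6), whose proof is exactly your construction of quasi-inverse functors via the complete set of orthogonal idempotents $e_i$, $M\mapsto\bigoplus_i M_i$ and $N\mapsto (Ne_i,\,\cdot\,\alpha)$, together with the observation that $k\mathit{Q}$ is finite-dimensional for a finite acyclic quiver so that finitely generated modules correspond to finite-dimensional representations. Your handling of the path-composition convention and the remark that connectedness is not needed are both accurate.
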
 

We will also need the following theorem.

\begin{theorem} \cite[p. 148]{Freyd} \label{p.SAFT} 
If $\mathfrak{A}$ is cocomplete, co-wellpowered and with a generating set, then every cocontinuous functor from $\mathfrak{A}$ to a locally small category has a right adjoint.
\end{theorem}

This is a very brief review of the basic concepts involved with our work. For the basic notions of quiver representations theory, we refer the reader to \cite{Assem}, \cite{Schiffler}, \cite{Auslander1}, \cite{Barot}, \cite{Etingof1}, \cite{Benson}, \cite{Zimmermann}.\\

\vspace{.2cm}

\section{\textbf{$n$-representations of Quivers: Basic Concepts}}\label{s.n.rep.quivers}
Let $\mathit{Q} = (\mathrm{Q}_{\!_{0}}, \mathrm{Q}_{\!_{1}}, s,t)$, $\mathit{Q}' = (\mathrm{Q}'_{\!_{0}}, \mathrm{Q}'_{\!_{1}}, s',t')$ be quivers. 

\begin{definition} \cite{Abdulwahid1} \label{def.1} 
A \textbf{$2$-representation} of  $(\mathit{Q},\mathit{Q}')$  (or a \textbf{birepresentation} of $(\mathit{Q},\mathit{Q}')$) is a triple $\bar{M} = ((M_i,\phi_{\alpha}),(M'_{i'},\phi'_{\beta}), (\psi^{\alpha}_{\beta}))_{i \in \mathrm{Q}_{\!_{0}}, i' \in \mathrm{Q}'_{\!_{0}}, \alpha \in \mathrm{Q}_{\!_{1}},\beta \in \mathrm{Q}'_{\!_{1}}}$, where $(M_i,\phi_{\alpha}),(M'_{i'},\phi'_{\beta})$ are representations of $\mathit{Q},\mathit{Q}'$  respectively, and $(\psi^{\alpha}_{\beta})$ is a collection of $k$-linear maps $\psi^{\alpha}_{\beta}: M_{t(\alpha)} \rightarrow M'_{s(\beta)} $,  one for each pair of arrows $(\alpha,\beta) \in  \mathrm{Q}_{\!_{1}} \times  \mathrm{Q}'_{\!_{1}}$. \\ 

Unless confusion is possible, we denote a birepresentation simply by $\bar{M} = (M,M',\psi)$. Next, we inductively define $n$-representations for any integer $n \geq 2$. \\
For any  $m \in \{2,\,\,..., \,\,n\}$, let $\mathit{Q}_m = (\mathrm{Q}^{(m)}_{\!_{0}}, \mathrm{Q}^{(m)}_{\!_{1}}, s^{(m)},t^{(m)})$ be a quiver.  A \textbf{$n$-representation} of $(\mathit{Q}_1, \,\, \mathit{Q}_2, \,\,... \,\, ,  \mathit{Q}_n)$ is $(2n-1)$-tuple  $\bar{V} = (V^{(1)},V^{(2)},...,V^{(n)},\psi_{\!_{1}}, \psi_{\!_{2}}, ..., \psi_{\!_{n-1}})$, where for every $m \in \{1, \,\,2,\,\,..., \,\,n\}$, $V^{(m)}$ is a representation of  $\mathit{Q}_m$, and $(\psi_{\!_{m}\gamma^{(m-1)}}^{\gamma^{(m)}}) $ is a collection of $k$-linear maps 
\begin{center}
•$\psi_{\!_{m}\gamma^{(m-1)}}^{\gamma^{(m)}}: V^{(m-1)}_{t^{(m-1)}(\gamma^{(m-1)})} \rightarrow V^{(m-1)}_{s^{(m)}(\gamma^{(m)})} $,
\end{center}
  one for each pair of arrows $(\gamma^{(m-1)},\gamma^{(m)}) \in \mathrm{Q}^{(m-1)}_{\!_{1}} \times  \mathrm{Q}^{(m)}_{\!_{1}}$  and $m \in \{2,\,\,..., \,\,n\}$.   \\ 

\end{definition}

\begin{remark} \cite{Abdulwahid1} \label{r.inductive} \textbf{•}
\begin{enumerate}[label=(\roman*)]
\item When no confusion is possible, we simply write $s,t$ instead of $s',t'$ respectively, and for every  $m \in \{1, \,\,2,\,\,..., \,\,n\}$, we write $s,t$ instead of $s^{(m)},\,t^{(m)}$ respectively. 
\item It is clear that if $(V^{(1)},V^{(2)},...,V^{(n)},\psi_{\!_{1}}, \psi_{\!_{2}}, ..., \psi_{\!_{n-1}})$ is an $n$-representation of $(\mathit{Q}_1, \,\, \mathit{Q}_2, \,\,... \,\, ,  \mathit{Q}_n)$, then  $(V^{(1)},V^{(2)},...,V^{(n-1)},\psi_{\!_{1}}, \psi_{\!_{2}}, ..., \psi_{\!_{n-2}})$ is an $(n-1)$-representation of $(\mathit{Q}_1, \,\, \mathit{Q}_2, \,\,... \,\, ,  \mathit{Q}_{n-1})$ for every integer $n \geq 2$.
\item Part $(ii)$ implies that for any integer $n > 2$, $n$-representations roughly inherit all the properties and the universal constructions that  $(n-1)$-representations have. Thus, we mostly focus on studying birepresentations since they can be regarded as a mirror in which one can see a clear decription of $n$-representations for any integer $n > 2$.\\
\end{enumerate} 

\end{remark}

\begin{example} \cite{Abdulwahid1} \label{ex.1} 
Let $\mathit{Q},\mathit{Q}'$ be the following quivers 
\begin{equation} \label{diag.eq06}
\xymatrix{
&&\\
\mathit{Q}: & 1 \ar[r]^{} & 2 
}
\hspace{80pt}
\xymatrix{
&1 \ar[dr]^{} & &\\
\mathit{Q}':&&3  &4 \ar[l]^{}\\
&2  \ar[ur]^{} &&
}
\end{equation}
and consider the following:
\begin{equation} \label{diag.eq07}
\xymatrix{
&k \ar[dr]^{\begin{bmatrix}
       1            \\[0.3em]
       0         \\[0.3em]
       \end{bmatrix}} & &\\
M &&k^2  &k \ar[l]_{\begin{bmatrix}
       1            \\[0.3em]
       1         \\[0.3em]
       \end{bmatrix}}\\
&k  \ar[ur]_{\begin{bmatrix}
       0            \\[0.3em]
       1         \\[0.3em]
       \end{bmatrix}} &&
}
\hspace{70pt}
\xymatrix{
&&&\\
M' &k  &k \ar[l]_{1}
}
\end{equation}
Then $M$ (respectively $M'$) is a representation of $\mathit{Q}$ ((respectively $\mathit{Q}'$) \cite{Schiffler}. The following are birepresentations of $(\mathit{Q},\mathit{Q}')$.
\begin{equation} \label{diag.eq08}
\xymatrix{
&k \ar[dr]^{\begin{bmatrix}
       1            \\[0.3em]
       0         \\[0.3em]
       \end{bmatrix}} & &\\
\bar{M} &&k^2  &k \ar[l]_{\begin{bmatrix}
       1            \\[0.3em]
       1         \\[0.3em]
       \end{bmatrix}} & &k \ar[ll]_{1} \ar@/_5pc/[llllu]_{1} 
       \ar@/^5pc/[lllld]^{1} &k \ar[l]_{1}\\
&k  \ar[ur]_{\begin{bmatrix}
       0            \\[0.3em]
       1         \\[0.3em]
       \end{bmatrix}} &&
}
\end{equation}
\begin{equation} \label{diag.eq09}
\xymatrix{
&k \ar[dr]^{\begin{bmatrix}
       1            \\[0.3em]
       0         \\[0.3em]
       \end{bmatrix}} & &\\
\bar{N} &&k^2  &k \ar[l]_{\begin{bmatrix}
       1            \\[0.3em]
       1         \\[0.3em]
       \end{bmatrix}} & &k \ar[ll]_{1} \ar@/_5pc/[llllu]_{0} 
       \ar@/^5pc/[lllld]^{0} &k \ar[l]_{1}\\
&k  \ar[ur]_{\begin{bmatrix}
       0            \\[0.3em]
       1         \\[0.3em]
       \end{bmatrix}} &&
}
\end{equation}
\end{example} 

\vspace{.1cm}

\begin{definition} \cite{Abdulwahid1} \label{def.2} 
Let $\bar{V} =(V,V',\psi), \,\, \bar{W} =(W,W',\psi')$ be birepresentations of $(\mathit{Q},\mathit{Q}')$. Write 
$ V = (V_i,\phi_{\alpha})$, $V'=(V'_{i'},\mu_{\beta})$, 
$ W = (W_i,\phi'_{\alpha})$, $W'=,(W'_{i'},\mu'_{\beta})$. A morphism of birepresentations  $ \bar{f}: \bar{V} \rightarrow \bar{W}$ is a pair  $\bar{f} = (f,f')$, where $f=(f_i):  (V_i,\phi_{\alpha}) \rightarrow (W_i,\phi'_{\alpha})$, $f'=(f'_{i'}):  (V'_{i'},\mu_{\beta}) \rightarrow (W'_{i'},\mu'_{\beta})$ are morphisms in  $Rep_k(\mathit{Q})$,  $Rep_k(\mathit{Q}')$  respectively such that the following diagram commutes. 
\begin{equation} \label{diag.eq010}
\xymatrix{
V_{s(\alpha)} \ar[rr]^{\phi_{\alpha}} \ar[dr]_{f_{s(\alpha)}} && V_{t(\alpha)} \ar[dd]|\hole^(.3){\psi^{\alpha}_{\beta}} \ar[dr]^{f_{t(\alpha)}}\\
& W_{s(\alpha)} \ar[rr]|(.3){\phi'_{\alpha}} && W_{t(\alpha)} \ar[dd]^(.32){\psi'^{\alpha}_{\beta}}\\
&& V'_{s(\beta)} \ar[rr]|\hole|(.65){\mu_{\beta}} \ar[dr]_{f'_{s(\beta)}} &&
V'_{t(\beta}  \ar[dr]^{f'_{t(\beta)}} \\
&&&  W'_{s(\beta)}  \ar[rr]_{\mu'_{\beta}} &&  W'_{t(\beta)}
}
\end{equation}
The composition of two maps $(f,f')$ and $(g,g')$ can be depicted as the following diagram.
\begin{equation} \label{def.eq011}
\xymatrix{
V_{s(\alpha)} \ar[rr]^{\phi_{\alpha}} \ar[dr]_{f_{s(\alpha)}} && V_{t(\alpha)} \ar[ddd]|!{[dd];[d]}\hole|!{[ddd];[dd]}\hole^(.5){\psi^{\alpha}_{\beta}}  \ar[dr]^{f_{t(\alpha)}}\\
& W_{s(\alpha)} \ar[rr]|(.3){\phi'_{\alpha}}  \ar[dr]_{g_{s(\alpha)}}
&& W_{t(\alpha)} \ar[ddd]|!{[dd];[d]}\hole^(.5){\psi'^{\alpha}_{\beta}}  \ar[dr]^{g_{t(\alpha)}}\\
&& U_{s(\alpha)} \ar[rr]|(.3){\phi''_{\alpha}} && U_{t(\alpha)} \ar[ddd]|(.5){\psi''^{\alpha}_{\beta}} \\
&& V'_{s(\beta)} \ar[rr]|\hole|(.65){\mu_{\beta}} \ar[dr]_{f'_{s(\beta)}} &&
V'_{t(\beta}  \ar[dr]^{f'_{t(\beta)}} \\
&&&  W'_{s(\beta)}  \ar[dr]_{g'_{s(\beta)}} \ar[rr]|\hole^(.3){\mu'_{\beta}} &&  W'_{t(\beta)}\ar[dr]^{g'_{t(\beta)}}\\
&&&&  U'_{s(\beta)}  \ar[rr]_(.5){\mu''_{\beta}} &&  U'_{t(\beta)}
}
\end{equation}
In general, if $\bar{V} = (V^{(1)},V^{(2)},...,V^{(n)},\psi_{\!_{1}}, \psi_{\!_{2}}, ..., \psi_{\!_{n-1}})$, $\bar{W} = (W^{(1)},W^{(2)},...,W^{(n)},\psi'_{\!_{1}}, \psi'_{\!_{2}}, ..., \psi'_{\!_{n-1}})$ are  $n$-representations of $(\mathit{Q}_1, \,\, \mathit{Q}_2, \,\,... \,\, ,  \mathit{Q}_n)$, then a morphism of $n$-representations $\bar{f}: \bar{V} \rightarrow \bar{W}$ is $n$-tuple $\bar{f}=(f^{\!^{(1)}}, f^{\!^{(2)}}, ..., f^{\!^{(n-1)}})$, where 

\begin{center}
•$f^{\!^{(m)}} = (f^{\!^{(m)}}_{i^{(m)}}):  (V_{{i^{(m)}}},\phi^{{i^{(m)}}}_{\gamma^{(m)}}) \rightarrow  (W_{{i^{(m)}}},\mu^{{i^{(m)}}}_{\gamma^{(m)}})$,
\end{center}
 is a morphism in  $Rep_k(\mathit{Q}_m)$ for any $m \in \{2,\,\,..., \,\,n\}$, and for each pair of arrows $(\gamma^{(m-1)},\gamma^{(m)}) \in \mathrm{Q}^{(m-1)}_{\!_{1}} \times  \mathrm{Q}^{(m)}_{\!_{1}}$ the following diagram is commutative.
 
\begin{equation} \label{diag.eq010.1}
\xymatrix{ 
	V^{(m-1)}_{t^{(m-1)}(\gamma^{(m-1)})} \ar[rrr]^{\psi_{\!_{m}\gamma^{(m-1)}}^{\gamma^{(m)}}} \ar[dd]_{f^{(m-1)}_{t^{(m-1)}(\gamma^{(m-1)})}} &&& V^{(m)}_{s^{(m)}(\gamma^{(m)})} \ar[dd]^{f^{(m)}_{s^{(m)}(\gamma^{(m)})}} \\
	&&&\\
	W^{(m-1)}_{t^{(m-1)}(\gamma^{(m-1)})} \ar[rrr]_{{\psi'}_{\!_{m}\gamma^{(m-1)}}^{\gamma^{(m)}}} &&& W^{(m)}_{s^{(m)}(\gamma^{(m)})} 
}
\end{equation}
for every $m \in \{2,\,\,..., \,\,n\}$.

\vspace{.2cm}

A morphism of $n$-representations can be depicted as:

\begin{equation} \label{diag.eq010.2}
\xymatrix{ 
	&  \ar[rr]&&  \ar[dd]&& \ar[rr]\ar[dd]|\hole &&  \ar[dd] &&  \ar[rr] \ar[dd] &&  \ar[dd] &\\
	 \ar[ur] \ar[rr]    &&  \ar[dd] \ar[ur]&& \ar[rr]\ar[dd] \ar[ur]&&  \ar[dd] \ar[ur]&&   \ar[rr] \ar[ur] \ar[dd]&&   \ar[dd] \ar[ur]&\\
	&&& \ar[rr]|\hole&&   &&  \ar[rr]|\hole&&   && \ar@{--}[r]  &\\
	&& \ar[rr]\ar[ur]&&  \ar[ur]&&  \ar[rr]\ar[ur]&&  \ar[ur]  & &   \ar[ur] \ar@{--}[rr] &&
}
\end{equation}

\end{definition}

\vspace{.2cm}

\begin{remark} \cite{Abdulwahid1} \label{r.1} \textbf{•}

\begin{enumerate}[label=(\roman*)]
\item The above definition gives rise to form a category  $Rep_{\!_{(\mathit{Q},\mathit{Q}')}}$ of $k$-linear birepresentations of  $(\mathit{Q},\mathit{Q}')$. We denote by  $rep_{\!_{(\mathit{Q},\mathit{Q}')}}$ the full subcategory of $Rep_{\!_{(\mathit{Q},\mathit{Q}')}}$  consisting of the finite dimensional birepresentations. Similarly, it also creates a category $Rep_{\!_{(\mathit{Q}_1,\mathit{Q}_2,...,\mathit{Q}_n)}}$ of $n$-representations. We denote $rep_{\!_{(\mathit{Q}_1,\mathit{Q}_2,...,\mathit{Q}_n)}}$ the full subcategory of $Rep_{\!_{(\mathit{Q}_1,\mathit{Q}_2,...,\mathit{Q}_n)}}$ consisting of the finite dimensional  $n$-representations.

\item For any  $m \in \{2,\,\,..., \,\,n\}$, let $\mathit{Q}_m = (\mathrm{Q}^{(m)}_{\!_{0}}, \mathrm{Q}^{(m)}_{\!_{1}}, s^{(m)},t^{(m)})$ be a quiver and fix $j \in \{2,\,\,..., \,\,n\}$. Let $\Upsilon_{Rep_k(\mathit{Q}_j)}$ be the subcategory of $Rep_{\!_{(\mathit{Q}_1,\mathit{Q}_2,...,\mathit{Q}_n)}}$ whose objects are $(2n-1)$-tuples  $\bar{X} = (0,0,...,V^{(j)},0,...,0,\psi_{\!_{1}}, \psi_{\!_{2}}, ..., \psi_{\!_{n-1}})$, where $V^{(j)}$ is a representation of $\mathit{Q}_j$,  and $\psi_{\!_{m}\gamma^{(m-1)}}^{\gamma^{(m)}} = 0$ for every pair of arrows $(\gamma^{(m-1)},\gamma^{(m)}) \in \mathrm{Q}^{(m-1)}_{\!_{1}} \times  \mathrm{Q}^{(m)}_{\!_{1}}$ and  $m \in \{2,\,\,..., \,\,n\}$. Then $\Upsilon_{Rep_k(\mathit{Q}_j)}$ is clearly a full subcategory of  $Rep_{\!_{(\mathit{Q}_1,\mathit{Q}_2,...,\mathit{Q}_n)}}$. Notably, we have an equivalence of categories $\Upsilon_{Rep_k(\mathit{Q}_j)} \simeq Rep_k(\mathit{Q}_j)$, and thus by by Proposition (\ref{p.Equiv.Rep.Mod}), we have $Rep_k(\mathit{Q}_j) \simeq \Upsilon_{Rep_k(\mathit{Q}_j)} \simeq Mod \, k\mathit{Q}_j $. It turns out that the category  $Rep_k(\mathit{Q}_j)$ and $ Mod \, k\mathit{Q}_j)$ can be identified as full subcategories of $Rep_{\!_{(\mathit{Q}_1,\mathit{Q}_2,...,\mathit{Q}_n)}}$. \\
The category $\Upsilon_{Rep_k(\mathit{Q}_j)}$ has a full subcategory  $\Upsilon_{rep_k(\mathit{Q}_j)}$ when we restrict the objects on the finite dimensional representations. Therefore, we also have  $rep_k(\mathit{Q}_j) \simeq \Upsilon_{rep_k(\mathit{Q}_j)} \simeq mod \, k\mathit{Q}_j $. \\
\end{enumerate}
\end{remark}

\begin{remark} \label{r.bicategory}  \,\, Let $\Bb_0$ be the class of all quivers. One might consider the class $\Bb_0$ and full subcategories of the categories of birepresentations of quivers to build a bicategory. Indeed, there is a bicategory $\Bb$ consists of 
\begin{itemize}
\item the objects or the $0$-cells of $\Bb$ are simply the elements of  $\Bb_0$
 
\item for each $Q, Q' \in \Bb_0$, we have  $\Bb(Q,Q') =  Rep_k(Q) \times Rep_k(Q')$, whose objects are the $1$-cells of $\Bb$, and whose   morphisms are the $2$-cells of $\Bb$

\item for each $Q, Q', Q'' \in \Bb_0$, \,\, a composition functor 
\begin{center}
•$\Ff: Rep_k(Q') \times Rep_k(Q'') \,\,\, \times \,\,\, Rep_k(Q) \times Rep_k(Q') \rightarrow Rep_k(Q) \times Rep_k(Q'')$
\end{center}
defined by:\\
\begin{center}
•$\Ff ((N',N''),(M,M')) = (M,N''), \,\,\,\,\, \Ff ((g',g''),(f,f')) = (f,g'')$
on $1$-cells $(M,M'), (N',N'')$ and $2$-cells $(f,f'), (g',g'')$.
\end{center}
\item for any  $Q \in \Bb_0$ and for each $(M,M') \in \Bb(Q,Q)$, we have $\Ff ((M,M'),(M,M)) = (M,M')$ \,\, and \,\, $\Ff ((M',M'),(M,M')) = (M,M')$. Furthermore, for any $2$-cell $(f,f')$, we have $\Ff ((f',f'),(f,f')) = (f,f')$  \,\, and \,\, $\Ff ((f,f'),(f,f)) = (f,f')$. Thus, the identity and the unit coherence axioms hold. 
\end{itemize}

The rest of bicategories axioms are obviously satisfied. \,\, For each $Q, Q' \in \Bb_0$, let $\beth_{\!_{(Q,Q')}}$ be the full subcategory of $Rep_{\!_{(Q,Q')}}$ whose objects are the triples $(X,X',\Psi)$, where $(X,X') \in Rep_k(Q) \times Rep_k(Q')$ and $\Psi^{\alpha}_{\beta} = 0$ for every pair of arrows $(\alpha,\beta) \in \mathsf{Q}_{\!_{1}} \times  \mathsf{Q}'_{\!_{1}}$, and whose morphisms are usual morphisms of birepresentations between them. \,\, Clearly, \,\, $\beth_{\!_{(Q,Q')}} \cong Rep_k(Q) \times Rep_k(Q')$  for any $Q, Q' \in \Bb_0$. Thus,  by considering the class $\Bb_0$ and the full subcategories described above of the birepresentations categories of quivers, we can always build a bicategory as above.\\
Obviously, \,\, the discussion above implies that for each $Q, Q' \in \Bb_0$, the product category  $Rep_k(Q) \times Rep_k(Q')$ can be viewed as a full subcategory of $Rep_{\!_{(Q,Q')}}$. Further, it implies that the product category $Rep_k(Q_1) \times Rep_k(Q_2) \times ...\times  Rep_k(Q_n) $ can be viewed as a full subcategory of $Rep_{\!_{(Q_1,Q_2,...,Q_n)}}$, where $Q_1,Q_2,...,Q_n \in \Bb_0$ and $n \geq 2$.\\
We also have the same analogue if we replace $Rep_{\!_{(Q_1,Q_2,...,Q_n)}}$,  by $rep_{\!_{(Q_1,Q_2,...,Q_n)}}$, and $Rep_k(Q_1)$, $Rep_k(Q_2)$, ... ,  $Rep_k(Q_n) $ by  $rep_k(Q_1), rep_k(Q_2)$,  ... , $rep_k(Q_n) $  respectively. For the basic notions of bicategories, we refer the reader to \cite{Leinster2}. \\
\end{remark}

\begin{definition} \cite{Abdulwahid1} \label{def.3} 
Let $\bar{V} = (V,V',\psi)$, $\bar{W} = (W,W',\psi')$ be birepresentations of $(\mathit{Q},\mathit{Q}')$. 
Write $ V = (V_i,\phi_{\alpha})$, $V' = (V'_{i'},\phi'_{\beta})$,  
$W = ((W_i,\mu_{\alpha})$, $W' = (W'_{i'},\mu'_{\beta})$. Then 
\begin{equation} \label{diag.eq018}
\bar{V} \oplus \bar{W} \,\,\ = \,\,\ ((V_i \oplus W_i,\begin{bmatrix}
       \phi_{\alpha} & 0            \\[0.3em]
       0 & \mu_{\alpha}         \\[0.3em] \end{bmatrix}), (V'_{i'} \oplus W'_{i'},  \begin{bmatrix}
       \phi'_{\alpha} & 0            \\[0.3em]
       0 & \mu'_{\alpha}         \\[0.3em] \end{bmatrix})
, \begin{bmatrix}
       \psi^{\alpha}_{\beta} & 0            \\[0.3em]
       0 & \psi'^{\alpha}_{\beta}         \\[0.3em] \end{bmatrix}),
\end{equation}
where $(V_i \oplus W_i,\begin{bmatrix}
       \phi_{\alpha} & 0            \\[0.3em]
       0 & \mu_{\alpha}         \\[0.3em] \end{bmatrix})$, $(V'_{i'} \oplus W'_{i'},  \begin{bmatrix}   \phi'_{\alpha} & 0   \\[0.3em]
       0 & \mu'_{\alpha}   \\[0.3em] \end{bmatrix})$ are the direct sums of $(V_i,\phi_{\alpha}),\, (W_i,\mu_{\alpha})$ and $(V'_{i'}, \phi'_{\beta}) ,\, (W'_{i'}, \mu'_{\beta})$ in $Rep_k(\mathit{Q})$, $Rep_k(\mathit{Q}')$ respectively, is a  birepresentation of $(\mathit{Q},\mathit{Q}')$ called the \textbf{direct sum} of  $\bar{V}, \,\  \bar{W}$ (in $Rep_{\!_{(\mathit{Q},\mathit{Q}')}}$). \\

Similarly, direct sums in $Rep_{\!_{(\mathit{Q}_1,\mathit{Q}_2,...,\mathit{Q}_n)}}$ can be defined. \\
\end{definition}

\begin{definition} \cite{Abdulwahid1} \label{def.4} 
A birepresentation $\bar{V} \in Rep_{\!_{(\mathit{Q},\mathit{Q}')}}$ is called \textbf{indecomposable} if $\bar{M} \neq 0$ and $\bar{M}$ cannot be written as a direct sum of two nonzero birepresentations, that is,
whenever $\bar{M} \cong \bar{L} \oplus \bar{N}$ with $ \bar{L}, \bar{N} \in Rep_{\!_{(\mathit{Q},\mathit{Q}')}}$, then $\bar{L} = 0 $ or $\bar{N} = 0$.

\end{definition}

\vspace{.2cm}

We end the section with the following consequence.\\

\begin{theorem} \cite{Abdulwahid1} \label{thm.Abelian2} 
The category  $Rep_{\!_{(\mathit{Q}_1,\mathit{Q}_2,...,\mathit{Q}_{n})}}$ is a $k$-linear abelian category for any integer $n \geq 2$. \\
\end{theorem}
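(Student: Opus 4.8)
The plan is to reduce the statement to the case $n=2$ (birepresentations) and then bootstrap via the inductive structure recorded in Remark~\ref{r.inductive}(ii). For the base case, I would realize $Rep_{\!_{(\mathit{Q},\mathit{Q}')}}$ as (equivalent to) a category of representations of a single quiver, or at least as an exact full subcategory of such, so that the abelian structure is inherited from Proposition~\ref{p.1}. Concretely, a birepresentation $\bar{M}=(M,M',\psi)$ consists of a representation of $\mathit{Q}$, a representation of $\mathit{Q}'$, and for each pair $(\alpha,\beta)\in\mathrm{Q}_{\!_1}\times\mathrm{Q}'_{\!_1}$ a linear map $M_{t(\alpha)}\to M'_{s(\beta)}$; this is exactly the data of a representation of a larger quiver $\widetilde{\mathit{Q}}$ obtained by placing $\mathit{Q}$ and $\mathit{Q}'$ side by side and adjoining, for each such pair, an arrow from $t(\alpha)$ to $s(\beta)$ (this is precisely the ``$2$-quiver'' construction announced for Section~5, but for the present theorem one only needs the underlying bookkeeping). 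Under this identification, morphisms of birepresentations correspond exactly to morphisms of representations of $\widetilde{\mathit{Q}}$ satisfying the commuting square~\eqref{diag.eq010.1}, so $Rep_{\!_{(\mathit{Q},\mathit{Q}')}}\simeq Rep_k(\widetilde{\mathit{Q}})$, and the latter is $k$-linear abelian by Proposition~\ref{p.1}.

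If one prefers not to invoke the $n$-quiver construction prematurely, the alternative is to verify the abelian axioms directly and componentwise, which is the more self-contained route. First, $Rep_{\!_{(\mathit{Q},\mathit{Q}')}}$ is $k$-linear: the hom-sets are $k$-subspaces of $Rep_k(\mathit{Q})(V,W)\times Rep_k(\mathit{Q}')(V',W')$ cut out by the linear compatibility conditions in~\eqref{diag.eq010}, composition is $k$-bilinear, and the zero object is $(0,0,0)$. Second, biproducts exist and are given by the direct sum of Definition~\ref{def.3}; one checks the universal property coordinatewise using that $\oplus$ works in $Rep_k(\mathit{Q})$ and $Rep_k(\mathit{Q}')$, and that the block-diagonal $\psi\oplus\psi'$ is the unique choice making the injections and projections into morphisms of birepresentations. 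Third, kernels and cokernels are computed vertexwise: given $\bar{f}=(f,f'):\bar{V}\to\bar{W}$, set $(\ker\bar f)_i=\ker f_i$ and $(\ker\bar f)_{i'}=\ker f'_{i'}$ with the transition maps $\phi_\alpha,\phi'_\beta$ and the linking maps $\psi^\alpha_\beta$ restricted to these subspaces — one must check that $\psi^\alpha_\beta(\ker f_{t(\alpha)})\subseteq\ker f'_{s(\beta)}$, which follows from commutativity of~\eqref{diag.eq010.1}; dually for cokernels using that $\psi^\alpha_\beta$ descends to the quotients. Finally, the canonical map $\coim\bar f\to\im\bar f$ is an isomorphism because it is so vertexwise in $Rep_k(\mathit{Q})$ and $Rep_k(\mathit{Q}')$ (each being abelian) and the linking data is transported along the underlying isomorphisms.

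For the inductive step, suppose $Rep_{\!_{(\mathit{Q}_1,\dots,\mathit{Q}_{n-1})}}$ is $k$-linear abelian. By Remark~\ref{r.inductive}(ii), an $n$-representation of $(\mathit{Q}_1,\dots,\mathit{Q}_n)$ is an $(n-1)$-representation of $(\mathit{Q}_1,\dots,\mathit{Q}_{n-1})$ together with a representation $V^{(n)}$ of $\mathit{Q}_n$ and a family of linking maps $\psi_{\!_{n-1}}$ from $V^{(n-1)}$ to $V^{(n)}$; equivalently, $Rep_{\!_{(\mathit{Q}_1,\dots,\mathit{Q}_n)}}$ is itself the category of ``birepresentations'' of the pair consisting of the (abelian) category $Rep_{\!_{(\mathit{Q}_1,\dots,\mathit{Q}_{n-1})}}$ and $Rep_k(\mathit{Q}_n)$, with the same coordinatewise formulas for kernels, cokernels and biproducts. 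So the very same argument as in the base case — now with one coordinate being an arbitrary abelian category rather than $Rep_k(\mathit{Q})$ — shows that $Rep_{\!_{(\mathit{Q}_1,\dots,\mathit{Q}_n)}}$ is $k$-linear abelian, completing the induction.

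\textbf{Main obstacle.} The routine parts (zero object, $k$-linear hom-spaces, biproducts) are immediate. The only place requiring genuine care is the verification that the coordinatewise kernel and cokernel carry a well-defined birepresentation structure, i.e.\ that the linking maps $\psi^\alpha_\beta$ restrict to subobjects and descend to quotients; this is exactly where the commutativity of the diagrams~\eqref{diag.eq010.1} defining morphisms is used, and it is the same point that must be checked for the canonical map $\coim\to\im$ to be an isomorphism in $Rep_{\!_{(\mathit{Q}_1,\dots,\mathit{Q}_n)}}$ rather than merely vertexwise. None of this is deep, but it is the conceptual content of the proof; everything else is transported from Proposition~\ref{p.1} through the two equivalent viewpoints (the $n$-quiver picture, or the iterated-birepresentation picture).
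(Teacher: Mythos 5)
Your proposal is correct, but note that this paper never proves Theorem~\ref{thm.Abelian2} at all: it is quoted from \cite{Abdulwahid1}, and the text explicitly defers the proof there. The self-contained argument you give in your second route (hom-sets as $k$-subspaces cut out by the linking conditions, biproducts via Definition~\ref{def.3}, vertexwise kernels and cokernels, with the only real content being that the maps $\psi^{\alpha}_{\beta}$ restrict to kernels and descend to cokernels because of the commuting square for morphisms, and that a vertexwise isomorphism $\coim\bar f\to\im\bar f$ is an isomorphism of $n$-representations) is exactly the standard verification one expects in the cited source, and your identification of the restriction/descent step as the crux is on target. Your first route is genuinely different from how the theorem is positioned in this paper: you prove abelianness by identifying $Rep_{\!_{(\mathit{Q},\mathit{Q}')}}$ with $Rep_k(\widetilde{\mathit{Q}})$ for the glued quiver, which is precisely the $2$-quiver equivalence the paper only establishes later (Propositions~\ref{p.9}, \ref{p.9.1}, and via \ref{p.10.1} the module-category picture), combined with Proposition~\ref{p.1}. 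That route buys a shorter proof and, once the equivalence is in hand, also gives completeness, cocompleteness and co-wellpoweredness for free; its only costs are expository (it inverts the paper's order, using Section~5 material to prove a Section~3 statement) and the small checks that the glued quiver is still finite and acyclic (it is; connectedness may fail, e.g.\ if one factor has no arrows, but Proposition~\ref{p.1} only needs finiteness). Two minor points: in your inductive step the phrase ``birepresentations of the pair of categories'' is an informality, since the linking maps live at the level of vector spaces attached to vertices of $\mathit{Q}_{n-1}$ and $\mathit{Q}_n$ rather than of abstract objects of $Rep_{\!_{(\mathit{Q}_1,\dots,\mathit{Q}_{n-1})}}$ — the componentwise argument goes through verbatim, so this is harmless but worth phrasing carefully; and you silently corrected the typo in Definition~\ref{def.1}, where the target of $\psi_{\!_{m}\gamma^{(m-1)}}^{\gamma^{(m)}}$ should read $V^{(m)}_{s^{(m)}(\gamma^{(m)})}$, which is the reading your proof uses.
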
 

 \vspace{.2cm}

For more concrete examples and proofs of the theorems above, we refer the reader to \cite{Abdulwahid1}.\\

 \vspace{.2cm}

 \section{\textbf{Completeness and Cocompleteness in The Categories of $n$-representations}}\label{s.Completeness.Cocompleteness}

Recall that a  category $\mathfrak{C}$ is cocomplete when every functor $\mathfrak{F}: \mathfrak{D} \rightarrow \mathfrak{C}$,
with $\mathfrak{D}$ a small category has a colimit \cite{Borceux1}. For the basic notions of cocomplete categories and examples, we refer to  \cite{Adamek}, \cite{Borceux1}, or \cite{Schubert}. A  functor is \textit{cocontinuous} if it preserves all small colimits \cite[p. 142]{Freyd}.

\begin{proposition} \label{p.7}  
The category $Rep_{\!_{(\mathit{Q},\mathit{Q}')}}$ is complete and the forgetful functor 
\begin{center}
•${\mathcal{U}}: Rep_{\!_{(\mathit{Q},\mathit{Q}')}}  \rightarrow  Rep_k(\mathit{Q}) \times Rep_k(\mathit{Q}')$
\end{center}
 is continuous. Furthermore, the limit of objects in $Rep_{\!_{(\mathit{Q},\mathit{Q}')}}$ can be obtained by the corresponding construction for objects in $Rep_k(\mathit{Q}) \times Rep_k(\mathit{Q}')$.\\  
\end{proposition}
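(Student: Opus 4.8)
The plan is to verify completeness by constructing limits explicitly, component by component, and then observe that the construction shows continuity of $\mathcal{U}$ at the same time. First I would recall that, by Proposition~\ref{p.1}, both $Rep_k(\mathit{Q})$ and $Rep_k(\mathit{Q}')$ are abelian, hence complete, and therefore the product category $Rep_k(\mathit{Q}) \times Rep_k(\mathit{Q}')$ is complete with limits computed componentwise. Given a small diagram $D: \mathfrak{D} \rightarrow Rep_{\!_{(\mathit{Q},\mathit{Q}')}}$, write each $D(d) = (V(d), V'(d), \psi(d))$. I would form the limits $V = \lim_{\mathfrak{D}} V(d)$ in $Rep_k(\mathit{Q})$ and $V' = \lim_{\mathfrak{D}} V'(d)$ in $Rep_k(\mathit{Q}')$, with limit cones $(\pi_d: V \to V(d))$ and $(\pi'_d: V' \to V'(d))$; concretely, at each vertex $i \in \mathrm{Q}_{\!_{0}}$ one has $V_i = \lim_d V(d)_i$ in $Vect_k$, and similarly for $V'$.

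The key step is to produce the linking maps $\psi^{\alpha}_{\beta}: V_{t(\alpha)} \rightarrow V'_{s(\beta)}$ making $(V,V',\psi)$ into a birepresentation that is the limit. For each pair of arrows $(\alpha,\beta) \in \mathrm{Q}_{\!_{1}} \times \mathrm{Q}'_{\!_{1}}$, the family of composites $\psi(d)^{\alpha}_{\beta} \circ (\pi_d)_{t(\alpha)}: V_{t(\alpha)} \to V'(d)_{s(\beta)}$ is compatible with the diagram $d \mapsto V'(d)_{s(\beta)}$ — this is precisely the commutativity in diagram~(\ref{diag.eq010}) applied to the transition morphisms of $D$ — so by the universal property of the limit $V'_{s(\beta)} = \lim_d V'(d)_{s(\beta)}$ there is a unique $\psi^{\alpha}_{\beta}: V_{t(\alpha)} \to V'_{s(\beta)}$ with $(\pi'_d)_{s(\beta)} \circ \psi^{\alpha}_{\beta} = \psi(d)^{\alpha}_{\beta} \circ (\pi_d)_{t(\alpha)}$ for all $d$. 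One then checks that $\bar{\pi}_d = (\pi_d, \pi'_d)$ is a morphism of birepresentations $\bar{V} \to D(d)$ (the required square is the defining equation of $\psi^{\alpha}_{\beta}$), that the $\bar{\pi}_d$ form a cone, and that this cone is universal: given any cone $(\bar{g}_d = (g_d, g'_d): \bar{W} \to D(d))$, the components $g_d$ and $g'_d$ induce unique $g: W \to V$ and $g': W' \to V'$, and one verifies $(g,g')$ is a morphism of birepresentations by a diagram chase comparing both legs after postcomposition with the jointly monic family $\{(\pi'_d)_{s(\beta)}\}$. Uniqueness of $(g,g')$ as a morphism over the cone follows from uniqueness in each factor.

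Since this construction builds $\bar{V}$ so that $\mathcal{U}(\bar{V}) = (V,V') = (\lim_d V(d), \lim_d V'(d))$ is exactly the limit in $Rep_k(\mathit{Q}) \times Rep_k(\mathit{Q}')$, and $\mathcal{U}(\bar{\pi}_d) = (\pi_d, \pi'_d)$ is the limit cone there, continuity of $\mathcal{U}$ and the final ``furthermore'' assertion are immediate. I do not expect a genuine obstacle here; the only point requiring care is the verification that the induced map $(g,g')$ respects the linking maps $\psi$, which hinges on the family $\{(\pi'_d)_{s(\beta)}: V'_{s(\beta)} \to V'(d)_{s(\beta)}\}_{d}$ being jointly monomorphic — true because a limit cone in $Vect_k$ is jointly monic — so that equality of the two composites into each $V'(d)_{s(\beta)}$ forces equality of the maps into $V'_{s(\beta)}$. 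Finally, I would remark (as in Remark~\ref{r.inductive}(ii)) that the identical argument, applied inductively across the $n-1$ families of linking maps, yields the analogous statement for $Rep_{\!_{(\mathit{Q}_1,\mathit{Q}_2,...,\mathit{Q}_n)}}$ and its forgetful functor to $\prod_{m=1}^{n} Rep_k(\mathit{Q}_m)$.
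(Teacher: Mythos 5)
Your proposal is correct and follows essentially the same route as the paper: compute the limits componentwise in $Rep_k(\mathit{Q})$ and $Rep_k(\mathit{Q}')$, induce the linking maps $\psi^{\alpha}_{\beta}$ vertexwise from the universal property of the limits in $Vect_k$, and verify the mediating morphism respects the linking maps using the (jointly monic) limit cone, which the paper phrases as an appeal to the universal property of the limit. The observation that the construction simultaneously yields continuity of $\mathcal{U}$ and the inductive extension to $n$-representations also matches the paper's treatment.
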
 

\begin{proof}

Let $\mathscr{D}$ be a small category, and let $\mathscr{F}:\mathscr{D} \rightarrow Rep_{\!_{(\mathit{Q},\mathit{Q}')}}$ be a functor, and  consider the composition of the following functors.
\begin{equation} \label{diag.eq21}
\xymatrix{
\mathscr{D} \ar[r]^(.4){\mathscr{F}} & Rep_{\!_{(\mathit{Q},\mathit{Q}')}}  \ar[r]^(.36){\mathcal{U}} &  Rep_k(\mathit{Q}) \times Rep_k(\mathit{Q}') \ar[r]^(.6){\mathcal{P}_1} &  Rep_k(\mathit{Q}) 
}
\end{equation} 

\begin{equation} \label{diag.eq22}
\xymatrix{
\mathscr{D} \ar[r]^(.4){\mathscr{F}} & Rep_{\!_{(\mathit{Q},\mathit{Q}')}}  \ar[r]^(.36){\mathcal{U}'} &  Rep_k(\mathit{Q}) \times Rep_k(\mathit{Q}') \ar[r]^(.6){\mathcal{P}_2} &  Rep_k(\mathit{Q}')
}
\end{equation} 

where  $\mathcal{U}$,  $\mathcal{U}'$ are the obvious forgetful functors, and $\mathcal{P}_1$, $\mathcal{P}_2$ are the projection functors. \\
For all $D \in \mathscr{D}$, let $\mathscr{F}D = \bar{V} = (V,V',\psi^{D,})$. Since $ Rep_k(\mathit{Q})$, $ Rep_k(\mathit{Q}')$ are complete categories, the functors $\mathcal{P}_1 \mathcal{U} \mathscr{F}$, $\mathcal{P}_2 \mathcal{U} \mathscr{F}$ have limits. Let 
 $(L,(\eta^{\!_{D}})_{\!_{{D} \in \mathscr{D}}})$, 
  $(L',(\eta'^{\!_{D}})_{\!_{{D} \in \mathscr{D}}})$ be limits of 
  $\mathcal{P}_1 \mathcal{U} \mathscr{F}$, $\mathcal{P}_2 \mathcal{U} \mathscr{F}$ respectively, where $\eta^{\!_{D}}, \,\ \eta'^{\!_{D}}$ are the morphisms

 \begin{equation}\label{diag.eq23}
L  \xrightarrow{\eta^{\!_{D}}} V^D, \,\,\, \,\,\,\,\,\, \,\,\, \,\,\,\,\,\, \,\,\, \,\,\,\,\,\, \,\,\, \,\,\,\,\,\  L'  \xrightarrow{\eta'^{\!_{D}}} V'^D \,\,\ ,
\end{equation}
for every $D \in \mathscr{D}$. Write 
$L=(L_i,\phi'_{\alpha})$, $L'=(L'_{i'},\mu'_{\beta})$, 
$V^D=(V^D_i,\phi^D_{\alpha})$, 
$V^{D'}=(V^{D'}_i,\phi^{D'}_{\alpha})$
$V'^{D'} = (V'^{D'}_{i'},\mu^{D'}_{\beta})$,
$V'^D=(V'^D_i,\mu^D_{\beta})$. \\

Let $ h:D \rightarrow D'$ be a morphism in $\mathscr{D}$ and consider the following diagram

\begin{equation} \label{diag.eq24}
\xymatrix{ 
	&&& V^{D'}_{s(\alpha)} \ar@/^1pc/[dr]^{(\mathscr{F}h)_{s(\alpha)}}  \ar[dd]|\hole^(.36){\phi^{D'}_{\alpha}}&&V'^{D'}_{s(\beta)} \ar@/_1pc/[dr]|(.5){(\mathscr{F}h')_{s(\beta)}} \ar[dd]|(.6){\mu^{D'}_{\beta}}&& L'_{s(\beta)} \ar[dd]|(.58){\mu'_{\beta}}\ar@/^.5pc/[dl]|(.5){{\eta'^{D}_{s(\beta)}}}   \ar[ll]_(.45){{\eta'^{D'}_{s(\beta)}}}   &&
\tilde{L}'_{s(\beta)}   \ar@{..>}[ll]_(.5){{\Xi'_{s(\beta)}}}  \ar[dd]|(.56){\tilde{\mu}'_{\beta}}
 \ar@/^1.2pc/[dlll]|(.3){{\tilde{\eta}'^{D}_{s(\beta)}}} \ar@/_2.7pc/[llll]_(.5){{\tilde{\eta}'^{D'}_{s(\beta)}}}	
	\\
\tilde{L}_{s(\alpha)} \ar@/^1.2pc/@{..>}[rr]^(.56){{\Xi_{s(\alpha)}}}  
\ar[dd]|(.4){\tilde{\phi}'_{\alpha}} \ar@/_1pc/[rrrr]|(.28){{\tilde{\eta}^{D}_{s(\alpha)}}} \ar@/^3pc/[urrr]^{{\tilde{\eta}^{D'}_{s(\alpha)}}}	
	&&L_{s(\alpha)}  \ar[dd]|(.36){\phi'_{\alpha}} \ar[rr]|(.35){{\eta^{D}_{s(\alpha)}}} \ar[ur]^{{\eta^{D'}_{s(\alpha)}}}&& V^{D}_{s(\alpha)} \ar[dd]|(.3){\phi^{D}_{\alpha}}&& V'^{D}_{s(\beta)}  \ar[dd]|(.36){\mu^{D}_{\beta}}&&&
 	\\
	&&& V^{D'}_{t(\alpha)}  \ar@/_3.5pc/@{-->}[uurr]|(.5){\psi^{D',\alpha}_{\beta}}
	\ar@/^1pc/[dr]|(.5){(\mathscr{F}h)_{t(\alpha)}} && V'^{D'}_{t(\beta)} \ar@/_1pc/[dr]|(.4){(\mathscr{F}h')_{t(\beta)}} && L'_{t(\beta)} \ar[ll]|\hole|(.69){{\eta'^{D'}_{t(\beta)}}} \ar@/^1pc/[dl]|(.37){{\eta'^{D}_{t(\beta)}}}&& 
	\tilde{L}'_{t(\beta)}  \ar@{..>}[ll]|(.4){{\Xi'_{t(\beta)}}}    
		 \ar@/^1.2pc/[dlll]|(.3){{\tilde{\eta}'^{D}_{t(\beta)}}} \ar@/_2.6pc/[llll]|(.28){{\tilde{\eta}'^{D'}_{t(\beta)}}}	\\
\tilde{L}_{t(\alpha)}  \ar@{..>}[rr]^(.5){{\Xi_{t(\alpha)}}} \ar@/_10pc/@{-->}[uuurrrrrrrrr]|(.4){\tilde{\psi}'^{\alpha}_{\beta}} \ar@/_2pc/[rrrr]|(.5){{\tilde{\eta}^{D}_{t(\alpha)}}} \ar@/^1pc/[urrr]|(.4){{\tilde{\eta}^{D'}_{t(\alpha)}}}			&& L_{t(\alpha)} \ar@/_7.5pc/@{-->}[uuurrrrr]|(.47){\psi'^{\alpha}_{\beta}}
\ar[ur]^{\eta^{D'}_{t(\alpha)}} \ar[rr]^{\eta^{D}_{t(\alpha)}} && V^{D}_{t(\alpha)} \ar@/_10pc/@{-->}[uurr]|(.5){\psi^{D,\alpha}_{\beta}} &&  V'^{D}_{t(\beta)} &&
}
\end{equation}

\vspace{.3cm}

Clearly,  $(L'_{s(\beta)},(\eta'^{\!_{D}}_{s(\beta)})_{\!_{{D} \in \mathscr{D}}})$  can be viewed as a limit of  a functor $\rho: \mathscr{D} \rightarrow Vect_k$. Furthermore, we have for any $D, \,\, D'  \in \mathscr{D}$,\\

\begin{tabular}{lllll}
$(\mathscr{F}h')_{s(\beta)} \,\, \psi^{D',\alpha}_{\beta} \,\, \eta^{D'}_{t(\alpha)} $  &  $=\psi^{D,\alpha}_{\beta} \,\, (\mathscr{F}h)_{t(\alpha)} \,\, \eta^{D'}_{t(\alpha)} $\\
&  (since $\bar{\mathscr{F}h} = (\mathscr{F}h, \mathscr{F}h')$ is a morphism in $Rep_{\!_{(\mathit{Q},\mathit{Q}')}})$\\
 & $=\psi^{D,\alpha}_{\beta} \,\, \eta^{D}_{t(\alpha)} $\\
&  (since $(L,(\eta^{\!_{D}})_{\!_{{D} \in \mathscr{D}}})$ is a limit of  $\mathcal{P}_1 \mathcal{U} \mathscr{F}$) \\
 \end{tabular}\\
 
\vspace{.2cm}

Thus, $(L_{t(\alpha)}, (\psi^{D,} \,\, \eta^{D}_{t(\alpha)})_{\!_{{D} \in \mathscr{D}}}) $ is a cone on  $\rho$. Since $Vec_k$ is complete,  there exists a unique $k$-linear map $\psi'^{\alpha}_{\beta}: L_{t(\alpha)} \rightarrow L'_{s(\beta)}$ with $\eta'^{\!_{D}}_{s(\beta)} \,\, \psi'^{\alpha}_{\beta} = \psi^{D,\alpha}_{\beta} \,\, \eta^{D}_{t(\alpha)}$ for every $D  \in \mathscr{D}$ and for each pair of arrows $(\alpha,\beta) \in \mathrm{Q}_{\!_{1}} \times \in \mathit{Q}'_{\!_{1}}$.  Hence,  $\bar{\eta}^{D} $ is a morphism in $Rep_{\!_{(\mathit{Q},\mathit{Q}')}}$ for any $D  \in \mathscr{D}$.
Let $\bar{L} =(L,L',\psi')$, $\bar{\eta}^{D} = (\eta^{D},\eta'^{D})$. We claim that $(\bar{L},(\bar{\eta}^{\!_{D}})_{\!_{{D} \in \mathscr{D}}})$ is a limit of $\mathscr{F}$. Then obviously all we need is to show that for any cone $ (\bar{\tilde{L}}, (\bar{\tilde{\eta}}^{\!_{D}})_{\!_{{D} \in \mathscr{D}}})$, there exists a unique morphism $\bar{\Xi} = (\Xi,\Xi')$ in $Rep_{\!_{(\mathit{Q},\mathit{Q}')}}$ with $\bar{\eta}^D \,\, \bar{\Xi} = \bar{\tilde{\eta}}^D$ for every $D  \in \mathscr{D}$. Let $ (\bar{\tilde{L}}, (\bar{\tilde{\eta}}^{\!_{D}})_{\!_{{D} \in \mathscr{D}}})$ be a cone on  $\mathscr{F}$ and write $\bar{\tilde{L}} = (\tilde{L},\tilde{L}',\tilde{\psi}')$. Since $Rep_k(\mathit{Q}) ,\,\ Rep_k(\mathit{Q}')$ are complete categories, exists unique morphisms $ \Xi, \,\, \Xi'$ in $Rep_k(\mathit{Q}) ,\,\ Rep_k(\mathit{Q}')$ respectively such that $\eta^D \,\, \Xi = \tilde{\eta}^D$, $\eta'^D \,\, \Xi' = \tilde{\eta}'^D$ for every $D  \in \mathscr{D}$. It remains to show that $\bar{\Xi} = (\Xi,\Xi')$ is a morphism in $Rep_{\!_{(\mathit{Q},\mathit{Q}')}}$. \\
For any $D  \in \mathscr{D}$, we have\\

\begin{tabular}{lllll}
$\eta'^{D}_{s(\beta)} \,\, \Xi'_{s(\beta)} \,\, \tilde{\psi}'^{\alpha}_{\beta} $  &  $ = \tilde{\eta}'^{D}_{s(\beta)} \,\ \tilde{\psi}'^{\alpha}_{\beta} $ \\
&  (since $\eta'^D \,\, \Xi' = \tilde{\eta}'^D$) \\
&  $= \psi^{D,\alpha}_{\beta} \,\, \tilde{\eta}^{D}_{t(\alpha)} $\\
&  (since $\bar{\tilde{\eta}}'^{D}$ is a morphism in $Rep_{\!_{(\mathit{Q},\mathit{Q}')}}$)\\
 & $=\psi^{D,\alpha}_{\beta} \,\, \eta^{D}_{t(\alpha)} \,\, \Xi_{t(\alpha)} $\\
&  (since $\eta^D \,\, \Xi = \tilde{\eta}^D$) \\
& $= \eta'^{D}_{s(\beta)} \psi'^{\alpha}_{\beta} \,\, \Xi_{t(\alpha)} $\\
&  (since $\bar{\eta}^{D} $ is a morphism in $Rep_{\!_{(\mathit{Q},\mathit{Q}')}}$)\\
 \end{tabular}\\
 
\vspace{.2cm}

Notably,  $(\tilde{L}_{t(\alpha)}, (\tilde{\eta}'^{D}_{s(\beta)} \, \tilde{\psi}')_{\!_{{D} \in \mathscr{D}}}) $ can be viewed as a limit of  a functor $\rho': \mathscr{D} \rightarrow Vect_k$. Furthermore, $(\tilde{L}_{t(\alpha)}, (\tilde{\eta}'^{D}_{s(\beta)} \, \tilde{\psi}'^{\alpha}_{\beta})_{\!_{{D} \in \mathscr{D}}}) $ is a cone on $\rho'$ since\\

\begin{tabular}{lllll}
$(\mathscr{F}h')_{s(\beta)} \,\, \tilde{\eta}'^{D'}_{s(\beta)} \,\, \tilde{\psi}'^{\alpha}_{\beta}$  &  $ = \mathscr{F}h'_{s(\beta)}  \,\, \eta'^{D'}_{s(\beta)} \,\, \Xi'_{s(\beta)}  \,\, \tilde{\psi}'^{\alpha}_{\beta}$ \\
&  (since $\bar{\tilde{\eta}}^{D'} = \bar{\eta}^{D'} \,\, \bar{\Xi}$)\\
 & $ = \eta'^{D}_{s(\beta)} \,\, \Xi'_{s(\beta)}  \,\, \tilde{\psi}'^{\alpha}_{\beta}$ \\
&  (since $\bar{\mathscr{F}h} \,\, \bar{\eta}^{D'} = \bar{\eta}^{D}$) \\
 & $ = \tilde{\eta}'^{D}_{s(\beta)} \,\, \tilde{\psi}'^{\alpha}_{\beta} \,\,\,\,\,\,$. \\
 \end{tabular}\\
 
\vspace{.2cm}

Since $Vec_k$ is complete,  it follows from the universal property of the limit that $\Xi'_{s(\beta)} \,\, \tilde{\psi}'^{\alpha}_{\beta} = \psi'^{\alpha}_{\beta} \,\, \Xi_{t(\alpha)}$. Thus,  $\bar{\Xi}$ is a morphism in $Rep_{\!_{(\mathit{Q},\mathit{Q}')}}$. \,\, Thus,  $(\bar{L},(\bar{\eta}^{\!_{D}})_{\!_{{D} \in \mathscr{D}}})$ is a limit of $\mathscr{F}$, as desired.\\

\end{proof}

From Proposition (\ref{p.7}) and Remark (\ref{r.inductive}), we obtain.\\

\begin{proposition} \label{p.7.1}  
The category $Rep_{\!_{(\mathit{Q}_1,\mathit{Q}_2,...,\mathit{Q}_n)}}$ is complete and the forgetful functor 
\begin{center}
•$\mathcal{U}_n: Rep_{\!_{(\mathit{Q}_1,\mathit{Q}_2,...,\mathit{Q}_n)}}  \rightarrow  Rep_k(\mathit{Q}_1) \times Rep_k(\mathit{Q}_2) \times . . . \times Rep_k(\mathit{Q}_n)$
\end{center}
 is continuous. Furthermore, the limit of objects in $Rep_{\!_{(\mathit{Q}_1,\mathit{Q}_2,...,\mathit{Q}_n)}}$ can be obtained by the corresponding construction for objects in $Rep_k(\mathit{Q}_1) \times Rep_k(\mathit{Q}_2) \times . . . \times Rep_k(\mathit{Q}_n)$.\\  
\end{proposition}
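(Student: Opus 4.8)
The plan is to bootstrap Proposition \ref{p.7} through an induction on $n$, exploiting the inductive structure of $n$-representations recorded in Remark \ref{r.inductive}(ii). The base case $n=2$ is exactly Proposition \ref{p.7}. For the inductive step, suppose the statement holds for $n-1$, and let $\mathscr{F}:\mathscr{D}\to Rep_{\!_{(\mathit{Q}_1,\mathit{Q}_2,...,\mathit{Q}_n)}}$ be a functor from a small category. By Remark \ref{r.inductive}(ii), composing $\mathscr{F}$ with the truncation assignment $\bar V=(V^{(1)},\dots,V^{(n)},\psi_1,\dots,\psi_{n-1})\mapsto(V^{(1)},\dots,V^{(n-1)},\psi_1,\dots,\psi_{n-2})$ gives a functor into $Rep_{\!_{(\mathit{Q}_1,\dots,\mathit{Q}_{n-1})}}$, which by the induction hypothesis has a limit $(\bar L',(\bar\eta'^{\,D})_D)$, and this limit is computed componentwise in $Rep_k(\mathit{Q}_1)\times\dots\times Rep_k(\mathit{Q}_{n-1})$. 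Separately, postcomposing $\mathscr{F}$ with the projection to the $n$-th representation $\bar V\mapsto V^{(n)}$ gives a functor into $Rep_k(\mathit{Q}_n)$, which has a limit $(L^{(n)},(\eta^{(n),D})_D)$ since $Rep_k(\mathit{Q}_n)$ is complete by Proposition \ref{p.1}.

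The next step is to manufacture the missing gluing data $\psi_{n-1}$ on the candidate limit $\bar L=(L^{(1)},\dots,L^{(n-1)},L^{(n)},\psi_1,\dots,\psi_{n-2},\psi_{n-1})$. For each pair of arrows $(\gamma^{(n-1)},\gamma^{(n)})\in\mathrm{Q}^{(n-1)}_{\!_{1}}\times\mathrm{Q}^{(n)}_{\!_{1}}$, the target vertex space $L^{(n)}_{s^{(n)}(\gamma^{(n)})}$ is, by componentwise computation of limits in $Rep_k(\mathit{Q}_n)$, a limit in $Vect_k$ of the corresponding functor $\mathscr{D}\to Vect_k$; and the maps $\psi_{n-1,\gamma^{(n-1)}}^{D,\gamma^{(n)}}\circ\eta^{(n-1),D}_{t^{(n-1)}(\gamma^{(n-1)})}:L^{(n-1)}_{t^{(n-1)}(\gamma^{(n-1)})}\to V^{(n),D}_{s^{(n)}(\gamma^{(n)})}$ form a cone over that functor, precisely because each $\mathscr{F}h$ is a morphism of $n$-representations (so it intertwines the $\psi_{n-1}$'s) and $(L^{(n-1)},(\eta^{(n-1),D})_D)$ is a limiting cone. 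Completeness of $Vect_k$ then yields a unique $k$-linear map $\psi_{n-1,\gamma^{(n-1)}}^{\gamma^{(n)}}:L^{(n-1)}_{t^{(n-1)}(\gamma^{(n-1)})}\to L^{(n)}_{s^{(n)}(\gamma^{(n)})}$ compatible with all the $\eta$'s, which makes each $\bar\eta^{\,D}=(\bar\eta'^{\,D},\eta^{(n),D})$ a morphism in $Rep_{\!_{(\mathit{Q}_1,\dots,\mathit{Q}_n)}}$. This is a verbatim adaptation of the argument producing $\psi'$ in the proof of Proposition \ref{p.7}.

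Finally I would check the universal property: given any cone $(\bar{\tilde L},(\bar{\tilde\eta}^{\,D})_D)$ over $\mathscr{F}$, the induction hypothesis supplies a unique mediating morphism $\bar\Xi'$ into $\bar L'$ on the first $n-1$ coordinates, and completeness of $Rep_k(\mathit{Q}_n)$ supplies a unique $\Xi^{(n)}$ on the last; it remains to verify that the pair $\bar\Xi=(\bar\Xi',\Xi^{(n)})$ respects $\psi_{n-1}$, i.e. $\Xi^{(n)}_{s^{(n)}(\gamma^{(n)})}\circ\tilde\psi_{n-1,\gamma^{(n-1)}}^{\gamma^{(n)}}=\psi_{n-1,\gamma^{(n-1)}}^{\gamma^{(n)}}\circ\Xi^{(n-1)}_{t^{(n-1)}(\gamma^{(n-1)})}$. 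This again follows by a chase in $Vect_k$: both composites, after postcomposition with each $\eta'^{\,D}_{s^{(n)}(\gamma^{(n)})}$, agree because $\bar{\tilde\eta}^{\,D}$ is a morphism of $n$-representations and $\bar{\tilde\eta}^{\,D}=\bar\eta^{\,D}\bar\Xi$, and $L^{(n)}_{s^{(n)}(\gamma^{(n)})}$ is a limit in $Vect_k$. Continuity of $\mathcal{U}_n$ and the componentwise description of the limit then fall out immediately from the construction. The only mildly delicate point — and the one I'd expect to be the main obstacle — is bookkeeping: one must be careful that the truncation functor of Remark \ref{r.inductive}(ii) is genuinely functorial and that the two limit constructions (the $(n-1)$-truncation and the $n$-th projection) are compatible on the overlap, but since limits in $Rep_k(\mathit{Q}_{n-1})$ are computed componentwise in $Vect_k$ this compatibility is automatic. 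Alternatively, one could bypass the induction entirely and rerun the proof of Proposition \ref{p.7} with $n$ coordinates at once, constructing all of $\psi_1,\dots,\psi_{n-1}$ simultaneously; the induction is merely a notational convenience.
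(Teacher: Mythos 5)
Your proposal is correct and follows essentially the same route as the paper: the paper gives no written argument for Proposition \ref{p.7.1}, merely asserting that it follows from Proposition \ref{p.7} together with Remark \ref{r.inductive}, and your induction on $n$ (truncate to an $(n-1)$-representation, take the limit of the last coordinate in $Rep_k(\mathit{Q}_n)$, and rebuild $\psi_{n-1}$ and the universal property by the same $Vect_k$-limit argument used for $\psi'$ in the proof of Proposition \ref{p.7}) is exactly the intended fleshing-out of that remark. Your closing observation that one could instead rerun the $n=2$ proof with all $n$ coordinates at once is also consistent with the paper's viewpoint, and your worry about ``overlap'' is harmless since the truncated coordinates and the $n$-th coordinate are disjoint.
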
 

\begin{proof}

\end{proof}

\begin{proposition} \label{p.8}  
The category $Rep_{\!_{(\mathit{Q},\mathit{Q}')}}$ is cocomplete and the forgetful functor 
\begin{center}
•${\mathcal{U}}: Rep_{\!_{(\mathit{Q},\mathit{Q}')}}  \rightarrow  Rep_k(\mathit{Q}) \times Rep_k(\mathit{Q}')$
\end{center}
 is cocontinuous. Moreover, the colimit of objects in $Rep_{\!_{(\mathit{Q},\mathit{Q}')}}$ can be obtained by the corresponding construction for objects in $Rep_k(\mathit{Q}) \times Rep_k(\mathit{Q}')$.\\  
\end{proposition}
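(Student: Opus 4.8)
The plan is to dualize the argument used for Proposition (\ref{p.7}). Fix a small category $\mathscr{D}$ and a functor $\mathscr{F}:\mathscr{D}\rightarrow Rep_{\!_{(\mathit{Q},\mathit{Q}')}}$; for $D\in\mathscr{D}$ write $\mathscr{F}D=\bar{V}^{D}=(V^{D},V'^{D},\psi^{D})$, and form the composites $\mathcal{P}_1\mathcal{U}\mathscr{F}:\mathscr{D}\rightarrow Rep_k(\mathit{Q})$ and $\mathcal{P}_2\mathcal{U}\mathscr{F}:\mathscr{D}\rightarrow Rep_k(\mathit{Q}')$ exactly as in the proof of Proposition (\ref{p.7}). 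By Propositions (\ref{p.1}) and (\ref{p.Equiv.Rep.Mod}), $Rep_k(\mathit{Q})$ and $Rep_k(\mathit{Q}')$ are cocomplete (being equivalent to categories of modules over path algebras), and their colimits are computed vertexwise. Let $(C,(\iota^{D})_{D\in\mathscr{D}})$ and $(C',(\iota'^{D})_{D\in\mathscr{D}})$ be colimits of $\mathcal{P}_1\mathcal{U}\mathscr{F}$ and $\mathcal{P}_2\mathcal{U}\mathscr{F}$, writing $C=(C_i,\phi'_{\alpha})$ and $C'=(C'_{i'},\mu'_{\beta})$.

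The heart of the proof is to endow $(C,C')$ with connecting maps turning it into a birepresentation $\bar{C}=(C,C',\psi)$. Fix arrows $\alpha\in\mathrm{Q}_{\!_{1}}$ and $\beta\in\mathrm{Q}'_{\!_{1}}$. Since colimits in $Rep_k(\mathit{Q})$ are vertexwise, $(C_{t(\alpha)},(\iota^{D}_{t(\alpha)})_{D})$ is a colimit in $Vect_k$ of $D\mapsto V^{D}_{t(\alpha)}$. One checks that the family $\bigl(\iota'^{D}_{s(\beta)}\circ\psi^{D,\alpha}_{\beta}:V^{D}_{t(\alpha)}\rightarrow C'_{s(\beta)}\bigr)_{D}$ is a cocone over this diagram: for $h:D\rightarrow D'$ in $\mathscr{D}$, the relation $\psi^{D',\alpha}_{\beta}\circ(\mathscr{F}h)_{t(\alpha)}=(\mathscr{F}h')_{s(\beta)}\circ\psi^{D,\alpha}_{\beta}$ (because $\bar{\mathscr{F}h}=(\mathscr{F}h,\mathscr{F}h')$ is a morphism in $Rep_{\!_{(\mathit{Q},\mathit{Q}')}}$), combined with $\iota'^{D'}_{s(\beta)}\circ(\mathscr{F}h')_{s(\beta)}=\iota'^{D}_{s(\beta)}$ (because $\iota'$ is a cocone), gives the required identity. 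By the universal property of $C_{t(\alpha)}$ there is then a unique $k$-linear map $\psi^{\alpha}_{\beta}:C_{t(\alpha)}\rightarrow C'_{s(\beta)}$ with $\psi^{\alpha}_{\beta}\circ\iota^{D}_{t(\alpha)}=\iota'^{D}_{s(\beta)}\circ\psi^{D,\alpha}_{\beta}$ for all $D$. Setting $\psi=(\psi^{\alpha}_{\beta})$ makes $\bar{C}=(C,C',\psi)$ a birepresentation of $(\mathit{Q},\mathit{Q}')$, and the displayed identity says precisely that each $\bar{\iota}^{D}=(\iota^{D},\iota'^{D})$ is a morphism in $Rep_{\!_{(\mathit{Q},\mathit{Q}')}}$; hence $(\bar{C},(\bar{\iota}^{D})_{D})$ is a cocone on $\mathscr{F}$.

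To see it is colimiting, take an arbitrary cocone $(\bar{\tilde{C}},(\bar{\tilde{\iota}}^{D})_{D})$ with $\bar{\tilde{C}}=(\tilde{C},\tilde{C}',\tilde{\psi})$. The universal properties of $C$ and $C'$ supply unique morphisms $\Xi:C\rightarrow\tilde{C}$ in $Rep_k(\mathit{Q})$ and $\Xi':C'\rightarrow\tilde{C}'$ in $Rep_k(\mathit{Q}')$ with $\Xi\circ\iota^{D}=\tilde{\iota}^{D}$ and $\Xi'\circ\iota'^{D}=\tilde{\iota}'^{D}$; it remains to verify that $\bar{\Xi}=(\Xi,\Xi')$ respects the connecting maps, i.e. $\Xi'_{s(\beta)}\circ\psi^{\alpha}_{\beta}=\tilde{\psi}^{\alpha}_{\beta}\circ\Xi_{t(\alpha)}$. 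Since the coprojections $\iota^{D}_{t(\alpha)}$ are jointly epimorphic, it suffices to compare both sides after precomposing with each $\iota^{D}_{t(\alpha)}$: the left side becomes $\tilde{\iota}'^{D}_{s(\beta)}\circ\psi^{D,\alpha}_{\beta}$ using the defining relation of $\psi^{\alpha}_{\beta}$ and $\Xi'\circ\iota'^{D}=\tilde{\iota}'^{D}$, while the right side becomes $\tilde{\psi}^{\alpha}_{\beta}\circ\tilde{\iota}^{D}_{t(\alpha)}=\tilde{\iota}'^{D}_{s(\beta)}\circ\psi^{D,\alpha}_{\beta}$ using $\Xi\circ\iota^{D}=\tilde{\iota}^{D}$ and the fact that $\bar{\tilde{\iota}}^{D}$ is a morphism in $Rep_{\!_{(\mathit{Q},\mathit{Q}')}}$. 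Hence $\bar{\Xi}$ is the unique factorization, $(\bar{C},(\bar{\iota}^{D})_{D})$ is a colimit of $\mathscr{F}$, and since $\mathcal{U}\bar{C}=(C,C')$ together with $(\mathcal{U}\bar{\iota}^{D})_{D}$ is by construction the componentwise colimit of $\mathcal{U}\mathscr{F}$, the functor $\mathcal{U}$ is cocontinuous and colimits in $Rep_{\!_{(\mathit{Q},\mathit{Q}')}}$ are inherited from $Rep_k(\mathit{Q})\times Rep_k(\mathit{Q}')$.

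I expect the only delicate point, exactly as in Proposition (\ref{p.7}), to be the bookkeeping in the two $Vect_k$-level universality arguments (the construction of $\psi^{\alpha}_{\beta}$ and the naturality check for $\bar{\Xi}$); everything else is formal. It is worth noting that here, unlike in the limit case, the direction of the connecting maps $\psi^{D,\alpha}_{\beta}:V^{D}_{t(\alpha)}\rightarrow V'^{D}_{s(\beta)}$ is the convenient one --- post-composing with the colimit coprojections $\iota'^{D}$ immediately yields a cocone --- so no auxiliary functor of the kind appearing in diagram (\ref{diag.eq24}) is needed.
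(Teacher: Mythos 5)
Your proposal is correct and follows essentially the same route as the paper's own proof: componentwise colimits in $Rep_k(\mathit{Q})$ and $Rep_k(\mathit{Q}')$, construction of the connecting maps $\psi^{\alpha}_{\beta}$ from the universal property of the vertexwise colimit in $Vect_k$ applied to the cocone $(\zeta'^{D}_{s(\beta)}\circ\psi^{D,\alpha}_{\beta})_{D}$, and verification that the induced factorization $(\Xi,\Xi')$ respects the connecting maps by comparing after precomposition with the coprojections. Your joint-epimorphicity phrasing of that last step is just a restatement of the paper's uniqueness-of-factorization argument through the auxiliary $Vect_k$-colimit, so the two proofs coincide in substance.
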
 

\begin{proof}

Let $\mathscr{D}$ be a small category, and let $\mathscr{F}:\mathscr{D} \rightarrow Rep_{\!_{(\mathit{Q},\mathit{Q}')}}$ be a functor, and  consider the composition of the following functors.
\begin{equation} \label{diag.eq25}
\xymatrix{
\mathscr{D} \ar[r]^(.4){\mathscr{F}} & Rep_{\!_{(\mathit{Q},\mathit{Q}')}}  \ar[r]^(.36){\mathcal{U}} &  Rep_k(\mathit{Q}) \times Rep_k(\mathit{Q}') \ar[r]^(.6){\mathcal{P}_1} &  Rep_k(\mathit{Q}) 
}
\end{equation} 

\begin{equation} \label{diag.eq26}
\xymatrix{
\mathscr{D} \ar[r]^(.4){\mathscr{F}} & Rep_{\!_{(\mathit{Q},\mathit{Q}')}}  \ar[r]^(.36){\mathcal{U}'} &  Rep_k(\mathit{Q}) \times Rep_k(\mathit{Q}') \ar[r]^(.6){\mathcal{P}_2} &  Rep_k(\mathit{Q}')
}
\end{equation} 

where  $\mathcal{U}$,  $\mathcal{U}'$ are the obvious forgetful functors, and $\mathcal{P}_1$, $\mathcal{P}_2$ are the projection functors. \\
For all $D \in \mathscr{D}$, let $\mathscr{F}D = \bar{V} = (V,V',\psi^{D,})$. Since $ Rep_k(\mathit{Q})$, $ Rep_k(\mathit{Q}')$ are cocomplete categories, the functors $\mathcal{P}_1 \mathcal{U} \mathscr{F}$, $\mathcal{P}_2 \mathcal{U} \mathscr{F}$ have colimits. Let 
 $(C,(\zeta^{\!_{D}})_{\!_{{D} \in \mathscr{D}}})$, 
  $(C',(\zeta'^{\!_{D}})_{\!_{{D} \in \mathscr{D}}})$ be colimits of 
  $\mathcal{P}_1 \mathcal{U} \mathscr{F}$, $\mathcal{P}_2 \mathcal{U} \mathscr{F}$ respectively, where $\zeta^{\!_{D}}, \,\ \zeta'^{\!_{D}}$ are the morphisms

 \begin{equation}\label{diag.eq27}
V^D  \xrightarrow{\zeta^{\!_{D}}} C, \,\,\, \,\,\,\,\,\, \,\,\, \,\,\,\,\,\, \,\,\, \,\,\,\,\,\, \,\,\, \,\,\,\,\,\  V'^D \xrightarrow{\zeta'^{\!_{D}}} C'  \,\,\ ,
\end{equation}
for every $D \in \mathscr{D}$. Write 
$C=(C_i,\phi'_{\alpha})$, $C'=(C'_{i'},\mu'_{\beta})$, 
$V^D=(V^D_i,\phi^D_{\alpha})$, 
$V^{D'}=(V^{D'}_i,\phi^{D'}_{\alpha})$
$V'^{D'} = (V'^{D'}_{i'},\mu^{D'}_{\beta})$,
$V'^D=(V'^D_i,\mu^D_{\beta})$. \\

Let $ h:D \rightarrow D'$ be a morphism in $\mathscr{D}$ and consider the following diagram.\\

\begin{equation} \label{diag.eq28}
\xymatrix{ 
	&&& V^D_{s(\alpha)}
	\ar@/_3pc/[dlll]_{{\tilde{\zeta}^{D}_{s(\alpha)}}}
	 \ar[dl]_(.5){{\zeta^{D}_{s(\alpha)}}}
	\ar@/^1pc/[dr]^(.4){(\mathscr{F}h)_{s(\alpha)}}
	 \ar[dd]|\hole^(.3){\phi^{D}_{\alpha}}&&V'^D_{s(\beta)}
	 \ar@/^2.7pc/[rrrr]^(.5){{\tilde{\zeta}'^{D}_{s(\beta)}}}
	 \ar[dl]_(.5){{\zeta^{D}_{s(\alpha)}}}
	\ar[rr]^(.45){{\zeta'^{D}_{s(\beta)}}}  
	\ar@/_1pc/[dr]|(.5){(\mathscr{F}h')_{s(\beta)}} \ar[dd]|(.6){\mu^{D}_{\beta}}&& C'_{s(\beta)} \ar@{..>}[rr]^(.5){{\Lambda'_{s(\beta)}}}
 \ar[dd]|(.58){\mu'_{\beta}}    &&
\tilde{C}'_{s(\beta)}     \ar[dd]|(.56){\tilde{\mu}'_{\beta}} 	\\
\tilde{C}_{s(\alpha)}   
\ar[dd]|(.4){\tilde{\phi}'_{\alpha}} 	
&& C_{s(\alpha)} \ar@/_1.2pc/@{..>}[ll]_(.46){{\Lambda_{s(\alpha)}}}
	 \ar[dd]|(.36){\phi'_{\alpha}}  && V^{D'}_{s(\alpha)}  \ar@/^1.4pc/[llll]|(.69){{\tilde{\zeta}^{D'}_{s(\alpha)}}}
	\ar[ll]|(.35){{\zeta^{D'}_{s(\alpha)}}}
	 \ar[dd]|(.3){\phi^{D'}_{\alpha}}&& V'^{D'}_{s(\beta)}
	 \ar@/_1.2pc/[urrr]|(.7){{\tilde{\zeta}'^{D'}_{s(\beta)}}}  
	\ar@/_.5pc/[ur]|(.5){{\zeta'^{D'}_{s(\beta)}}}
	\ar[dd]|(.36){\mu^{D'}_{\beta}}&&&
 	\\
	&&& V^D_{t(\alpha)} 
	\ar[dl]_{\zeta^{D}_{t(\alpha)}}
	 \ar@/_1pc/[dlll]|(.58){{\tilde{\zeta}^{D}_{t(\alpha)}}}
	  \ar@/_3.5pc/@{-->}[uurr]|(.5){\psi^{D,\alpha}_{\beta}}
	\ar@/^1pc/[dr]|(.5){(\mathscr{F}h)_{t(\alpha)}} && V'^D_{t(\beta)} 
		\ar[rr]|\hole _(.36){{\zeta'^{D}_{t(\beta)}}}
	\ar@/_1pc/[dr]|(.4){(\mathscr{F}h')_{t(\beta)}} && C'_{t(\beta)}  
	 \ar@{..>}[rr]|(.6){{\Lambda'_{t(\beta)}}}    && 
	\tilde{C}'_{t(\beta)} 
	 \ar@/_2.6pc/[llll]|(.28){{\tilde{\zeta}'^{D}_{t(\beta)}}}	\\
\tilde{C}_{t(\alpha)} 
  \ar@/_10pc/@{-->}[uuurrrrrrrrr]|(.4){\tilde{\psi}'^{\alpha}_{\beta}} 
 && C_{t(\alpha)} 
\ar@{..>}[ll]_(.5){{\Lambda_{t(\alpha)}}} 
 \ar@/_7.5pc/@{-->}[uuurrrrr]|(.47){\psi'^{\alpha}_{\beta}}
&& V^{D'}_{t(\alpha)} 
 \ar[ll]_{\zeta^{D'}_{t(\alpha)}} 
\ar@/^2pc/[llll]|(.5){{\tilde{\zeta}^{D'}_{t(\alpha)}}}
 \ar@/_10pc/@{-->}[uurr]|(.5){\psi^{D',\alpha}_{\beta}} &&  V'^{D'}_{t(\beta)} 
\ar@/_1.2pc/[urrr]|(.66){{\tilde{\zeta}'^{D'}_{t(\beta)}}}
\ar@/_1pc/[ur]|(.6){{\zeta'^{D'}_{t(\beta)}}} &&
}
\end{equation}

\vspace{.3cm}

It is clear that  $(C_{t(\alpha)},(\zeta^{\!_{D}}_{t(\alpha)})_{\!_{{D} \in \mathscr{D}}})$  can be viewed as a colimit of  a functor $\upsilon: \mathscr{D} \rightarrow Vect_k$. Moreover, we have for any $D, \,\, D'  \in \mathscr{D}$,\\

\begin{tabular}{lllll}
$\zeta'^{\!_{D'}}_{s(\beta)} \,\, \psi^{D',\alpha}_{\beta} \,\, 
(\mathscr{F}h')_{t(\alpha)}$  &  $=\zeta'^{\!_{D'}}_{s(\beta)} \,\, (\mathscr{F}h)_{s(\beta)} \,\, \psi^{D,\alpha}_{\beta} $\\
&  (since $\bar{\mathscr{F}h} = (\mathscr{F}h, \mathscr{F}h')$ is a morphism in $Rep_{\!_{(\mathit{Q},\mathit{Q}')}})$\\
 & $=\zeta'^{\!_{D}}_{s(\beta)} \,\, \psi^{D,\alpha}_{\beta} $\\
&  (since $(C',(\zeta^{\!_{D}})_{\!_{{D} \in \mathscr{D}}})$ is a colimit of  $\mathcal{P}_2 \mathcal{U} \mathscr{F}$) \\
 \end{tabular}\\
 
\vspace{.2cm}

Therefore, $(C'_{s(\beta)}, (\zeta'^{\!_{D}}_{s(\beta)} \,\, \psi^{D,})_{\!_{{D} \in \mathscr{D}}}) $ is a cocone on  $\upsilon$. Since $Vec_k$ is cocomplete, there exists a unique $k$-linear map $\psi'^{\alpha}_{\beta}: C_{t(\alpha)} \rightarrow C'_{s(\beta)}$ with $  \psi'^{\alpha}_{\beta} \,\, \zeta^{\!_{D}}_{t(\alpha)} = \zeta'^{\!_{D}}_{s(\beta)} \,\, \psi^{D,\alpha}_{\beta}$ for every $D \in \mathscr{D}$  and for each pair of arrows $(\alpha,\beta) \in \mathrm{Q}_{\!_{1}} \times \in \mathit{Q}'_{\!_{1}}$. 

It turns out that  $\bar{\zeta}^{D} $ is a morphism in $Rep_{\!_{(\mathit{Q},\mathit{Q}')}}$ for any $D  \in \mathscr{D}$.
Let $\bar{C} =(C,C',\psi')$, $\bar{\zeta}^{D} = (\zeta^{D},\zeta'^{D})$. We claim that $(\bar{C},(\bar{\zeta}^{\!_{D}})_{\!_{{D} \in \mathscr{D}}})$ is a colimit of $\mathscr{F}$. To substantiate this claim, we need to show that for any cocone $ (\bar{\tilde{C}}, (\bar{\tilde{\zeta}}^{\!_{D}})_{\!_{{D} \in \mathscr{D}}})$, there exists a unique morphism $\bar{\Lambda} = (\Lambda,\Lambda')$ in $Rep_{\!_{(\mathit{Q},\mathit{Q}')}}$ with $ \bar{\Lambda} \,\, \bar{\zeta}^D  = \bar{\tilde{\zeta}}^D$ for every $D  \in \mathscr{D}$. Let $ (\bar{\tilde{C}}, (\bar{\tilde{\zeta}}^{\!_{D}})_{\!_{{D} \in \mathscr{D}}})$ be a cocone on  $\mathscr{F}$ and write $\bar{\tilde{C}} = (\tilde{C},\tilde{C}',\tilde{\psi}')$. Since $Rep_k(\mathit{Q}) ,\,\ Rep_k(\mathit{Q}')$ are cocomplete categories, exists unique morphisms $ \Lambda, \,\, \Lambda'$ in $Rep_k(\mathit{Q}) ,\,\ Rep_k(\mathit{Q}')$ respectively such that $ \Lambda\,\, \zeta^D = \tilde{\zeta}^D$, $ \Lambda' \,\, \zeta'^D  = \tilde{\zeta}'^D$ for every $D  \in \mathscr{D}$. It remains to show that $\bar{\Lambda} = (\Lambda,\Lambda')$ is a morphism in $Rep_{\!_{(\mathit{Q},\mathit{Q}')}}$. For any $D \in \mathscr{D}$, we have\\

\begin{tabular}{lllll}
$\tilde{\psi}'^{\alpha}_{\beta} \,\, \Lambda_{t(\alpha)} \,\, \zeta^{D}_{t(\alpha)} $ &  $= \tilde{\psi}'^{\alpha}_{\beta} \,\, \tilde{\zeta}^{D}_{t(\alpha)} $\\
&  (since $ \bar{\Lambda} \,\, \bar{\zeta}^D  = \bar{\tilde{\zeta}}^D$)\\
 & $= \tilde{\zeta}'^{D}_{s(\beta)} \,\, \psi^{D,\alpha}_{\beta}$\\
&  (since $\bar{\tilde{\zeta}}^{D} $ is a morphism in $Rep_{\!_{(\mathit{Q},\mathit{Q}')}}$)\\
& $= \Lambda'_{s(\beta)} \,\, \zeta'^{D}_{s(\beta)} \,\, \psi^{D,\alpha}_{\beta}$\\
& (since $ \bar{\Lambda} \,\, \bar{\zeta}^D  = \bar{\tilde{\zeta}}^D$)\\
& $= \Lambda'_{s(\beta)} \,\, \psi'^{\alpha}_{\beta}  \,\, \zeta^{D}_{t(\alpha)}$ \\
& (since $\bar{\zeta}^{D} $ is a morphism in $Rep_{\!_{(\mathit{Q},\mathit{Q}')}}$)\\
 \end{tabular}\\
 
\vspace{.2cm}

Notably,  $(C_{t(\alpha)}, (\zeta^{D}_{t(\alpha)})_{\!_{{D} \in \mathscr{D}}}) $ can be viewed as a colimit of  a functor $\upsilon': \mathscr{D} \rightarrow Vect_k$. Furthermore, $(\tilde{C}'_{s(\beta)}, (\tilde{\psi}'^{\alpha}_{\beta} \,\, \tilde{\zeta}^{D}_{t(\alpha)})_{\!_{{D} \in \mathscr{D}}}) $  is a cocone on $\upsilon'$ since for any $D, \,\, D'  \in \mathscr{D}$, we have\\

\begin{tabular}{lllll}
$ \tilde{\psi}'^{\alpha}_{\beta} \,\, \tilde{\zeta}^{D'}_{t(\alpha)} \,\, (\mathscr{F}h)_{t(\alpha)}$  &  $ = \tilde{\psi}'^{\alpha}_{\beta} \,\, \Lambda_{t(\alpha)} \,\, \zeta^{D'}_{t(\alpha)} \,\, (\mathscr{F}h)_{t(\alpha)}$ \\
&  (since $ \bar{\Lambda} \,\, \bar{\zeta}^D  = \bar{\tilde{\zeta}}^D$)\\
 &  $ = \tilde{\psi}'^{\alpha}_{\beta} \,\, \Lambda_{t(\alpha)} \,\, \zeta^{D}_{t(\alpha)} $ \\
&  (since $(C,(\zeta^{\!_{D}})_{\!_{{D} \in \mathscr{D}}})$ is a colimit of $\mathcal{P}_1 \mathcal{U} \mathscr{F}$) \\
 & $ = \tilde{\psi}'^{\alpha}_{\beta} \,\,  \tilde{\zeta}^{D}_{s(\beta)}$ \\
 &  (since $ \bar{\Lambda} \,\, \bar{\zeta}^D  = \bar{\tilde{\zeta}}^D$)\\
 \end{tabular}\\
 
\vspace{.2cm}

Since $Vec_k$ is cocomplete,  it follows from the universal property of the colimit that $\Lambda'_{s(\beta)} \,\, \tilde{\psi}'^{\alpha}_{\beta} = \psi'^{\alpha}_{\beta} \,\, \Lambda_{t(\alpha)}$. Consequently,  $\bar{\Lambda}$ is a morphism in $Rep_{\!_{(\mathit{Q},\mathit{Q}')}}$, which completes the proof.\\

\end{proof}

From Proposition (\ref{p.8}) and Remark (\ref{r.inductive}), we obtain.\\

\begin{proposition} \label{p.8.1}  
The category $Rep_{\!_{(\mathit{Q}_1,\mathit{Q}_2,...,\mathit{Q}_n)}}$ is cocomplete and the forgetful functor 
\begin{center}
•$\mathcal{U}_n: Rep_{\!_{(\mathit{Q}_1,\mathit{Q}_2,...,\mathit{Q}_n)}}  \rightarrow  Rep_k(\mathit{Q}_1) \times Rep_k(\mathit{Q}_2) \times . . . \times Rep_k(\mathit{Q}_n)$
\end{center}
 is cocontinuous. Furthermore, the colimit of objects in $Rep_{\!_{(\mathit{Q}_1,\mathit{Q}_2,...,\mathit{Q}_n)}}$ can be obtained by the corresponding construction for objects in $Rep_k(\mathit{Q}_1) \times Rep_k(\mathit{Q}_2) \times . . . \times Rep_k(\mathit{Q}_n)$.\\  
\end{proposition}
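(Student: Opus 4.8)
The plan is to argue by induction on $n$, taking Proposition \ref{p.8} as the base case $n=2$ and using Remark \ref{r.inductive}(ii) to reduce the inductive step to a verbatim repetition of the argument of Proposition \ref{p.8}. First I would make the structural observation behind Remark \ref{r.inductive}(ii) explicit: by Definition \ref{def.1}, an $n$-representation $\bar V=(V^{(1)},\dots,V^{(n)},\psi_{\!_1},\dots,\psi_{\!_{n-1}})$ of $(\mathit{Q}_1,\dots,\mathit{Q}_n)$ is precisely the datum of an $(n-1)$-representation $\bar V'=(V^{(1)},\dots,V^{(n-1)},\psi_{\!_1},\dots,\psi_{\!_{n-2}})$ of $(\mathit{Q}_1,\dots,\mathit{Q}_{n-1})$, a representation $V^{(n)}$ of $\mathit{Q}_n$, and a family $\psi_{\!_{n-1}}$ of $k$-linear maps $V^{(n-1)}_{t(\gamma^{(n-1)})}\to V^{(n)}_{s(\gamma^{(n)})}$, one for each pair $(\gamma^{(n-1)},\gamma^{(n)})\in \mathrm{Q}^{(n-1)}_{\!_1}\times\mathrm{Q}^{(n)}_{\!_1}$; and, by Definition \ref{def.2}, a morphism $\bar f=(\bar f',f^{(n)})$ of $n$-representations is a morphism $\bar f'$ of $(n-1)$-representations together with a morphism $f^{(n)}$ in $Rep_k(\mathit{Q}_n)$ making the relevant squares commute. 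Thus $Rep_{\!_{(\mathit{Q}_1,\dots,\mathit{Q}_n)}}$ stands to $Rep_{\!_{(\mathit{Q}_1,\dots,\mathit{Q}_{n-1})}}$ and $Rep_k(\mathit{Q}_n)$ exactly as $Rep_{\!_{(\mathit{Q},\mathit{Q}')}}$ stands to $Rep_k(\mathit{Q})$ and $Rep_k(\mathit{Q}')$, with a forgetful functor $Rep_{\!_{(\mathit{Q}_1,\dots,\mathit{Q}_n)}}\to Rep_{\!_{(\mathit{Q}_1,\dots,\mathit{Q}_{n-1})}}\times Rep_k(\mathit{Q}_n)$.

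With this reformulation in place I would run the proof of Proposition \ref{p.8} word for word, replacing $Rep_k(\mathit{Q})$ by $Rep_{\!_{(\mathit{Q}_1,\dots,\mathit{Q}_{n-1})}}$ (cocomplete by the inductive hypothesis) and $Rep_k(\mathit{Q}')$ by $Rep_k(\mathit{Q}_n)$ (cocomplete as in Proposition \ref{p.8}). Given a small category $\mathscr{D}$ and a functor $\mathscr{F}:\mathscr{D}\to Rep_{\!_{(\mathit{Q}_1,\dots,\mathit{Q}_n)}}$, one forms the colimit $\bar C'$ of the composite of $\mathscr{F}$ with the truncation functor and the colimit $C^{(n)}$ of the composite of $\mathscr{F}$ with the last-component functor; then one produces the gluing family $\psi_{\!_{n-1}}$ on the pair $(\bar C',C^{(n)})$ from the universal property of colimits in $Vect_k$, exactly as the map $\psi'^{\alpha}_{\beta}$ was produced in Proposition \ref{p.8}, and one checks that the resulting object with its cocone is a colimit of $\mathscr{F}$ by the same diagram chase. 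Composing the forgetful functor $Rep_{\!_{(\mathit{Q}_1,\dots,\mathit{Q}_n)}}\to Rep_{\!_{(\mathit{Q}_1,\dots,\mathit{Q}_{n-1})}}\times Rep_k(\mathit{Q}_n)$ with $\mathcal{U}_{n-1}\times\mathrm{id}$, and iterating down to $n=2$, identifies $\mathcal{U}_n$ and shows it is cocontinuous, and shows that the colimit is obtained componentwise in $Rep_k(\mathit{Q}_1)\times\dots\times Rep_k(\mathit{Q}_n)$, as claimed.

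The only point requiring genuine care — and hence the main (minor) obstacle — is that, when constructing $\psi_{\!_{n-1}}$ on the colimit, the space $V^{(n-1)}_{t(\gamma^{(n-1)})}$ of the colimit $\bar C'$ must itself be a colimit in $Vect_k$ of the corresponding spaces of the $\mathscr{F}D$'s, since it plays the role that $C_{t(\alpha)}$ plays in Proposition \ref{p.8}. This holds: by the inductive hypothesis the $(n-1)$st component of $\bar C'$ is the colimit, computed in $Rep_k(\mathit{Q}_{n-1})$, of the $(n-1)$st components of the $\mathscr{F}D$'s, and colimits in $Rep_k(\mathit{Q}_{n-1})$ are computed vertexwise (that category being a functor category into $Vect_k$), so each evaluation functor $Rep_k(\mathit{Q}_{n-1})\to Vect_k$ is cocontinuous. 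With this observation every step of the argument of Proposition \ref{p.8} transcribes without change, completing the induction.
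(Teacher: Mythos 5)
Your proof is correct and takes essentially the same route as the paper, which obtains Proposition \ref{p.8.1} directly from Proposition \ref{p.8} together with the inductive structure of Remark \ref{r.inductive}; the paper leaves the transcription implicit, while you spell it out, including the needed observation that the relevant vertex spaces of the colimit of the truncated $(n-1)$-representations are themselves colimits in $Vect_k$. No gaps.
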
 

\begin{proof}

\end{proof}

 \section{\textbf{$n$-quivers and $n$-representations} \label{s.$n$-quiivers}} 

Let $\bar{V} = (V,V', \psi)$, 
$\bar{W} = (W,W', \psi')$ be birepresentations of $(\mathit{Q},\mathit{Q}')$. For simplicity, we suppress $k$ from the tensor product $\otimes_k$ and use $\otimes$ instead. Define 
\begin{equation} \label{diag.eq29}
\bar{V} \otimes \bar{W} = (V \otimes W,V'\otimes W', \psi \otimes \psi') \,\,\,\,\,\,\,\,\,\ .
\end{equation}
Then $\bar{V} \otimes \bar{W}$ is clearly a birepresentation of $(\mathit{Q},\mathit{Q}')$. Let $\bar{f}: \bar{V} \rightarrow \bar{W}$, $\bar{g}: \bar{M} \rightarrow \bar{N}$ be morphisms in $Rep_{\!_{(\mathit{Q},\mathit{Q}')}}$ and write $\bar{V} = (V,V', \psi)$, $\bar{W} = (W,W', \psi')$,  $\bar{M} = (M,M', \Psi)$,  $\bar{N} = (N,N', \Psi')$ . Define  
\begin{equation} \label{diag.eq30}
\bar{f} \otimes \bar{g}: \bar{V} \otimes \bar{M} \rightarrow \bar{W} \otimes \bar{N} \,\,\,\,\,\,\,\,\,\ .
\end{equation}
Then it is clear that $\bar{f} \otimes \bar{g}$ is a morphism in $Rep_{\!_{(\mathit{Q},\mathit{Q}')}}$, and hence the following diagram is commutative. 

\begin{equation} \label{diag.eq31}
\xymatrix{ 
	& V_{s(\alpha)} \otimes  M_{s(\alpha)} \ar[dl]_{f_{s(\alpha)} \otimes g_{s(\alpha)}} \ar[r]^{\phi_{\alpha} \otimes \varphi_{\alpha}} & V_{t(\alpha)} \otimes M_{t(\alpha)} \ar[dl]^{f_{t(\alpha)} \otimes g_{t(\alpha)}} \ar[dd]^{\psi^{\alpha}_{\beta} \otimes \Psi^{\alpha}_{\beta}}&& \\
	W_{s(\alpha)} \otimes  N_{s(\alpha)} \ar[r]_{\phi'_{\alpha} \otimes \varphi'_{\alpha}} & W_{t(\alpha)} \otimes N_{t(\alpha)} \ar[dd]_{\psi'^{\alpha}_{\beta} \otimes \Psi'^{\alpha}_{\beta}}&& &\\
	 && V'_{s(\alpha)} \otimes  M'_{s(\alpha)} \ar[dl]_{f'_{s(\beta)} \otimes g'_{s(\beta)}} \ar[r]^{\mu_{\beta} \otimes \nu_{\beta}}& V'_{t(\alpha)} \otimes M'_{t(\alpha)} \ar[dl]^{f'_{t(\beta)} \otimes g'_{t(\beta)}} \\
	 &W'_{s(\alpha)} \otimes  N'_{s(\alpha)} \ar[r]_{\mu'_{\beta} \otimes \nu'_{\beta}}& W'_{t(\alpha)} \otimes N'_{t(\alpha)} &
}
\end{equation}

\vspace{.2cm}

Thus, the category $Rep_{\!_{(\mathit{Q},\mathit{Q}')}}$ is a monoidal category, and hence by Remark (\ref{r.inductive}), $Rep_{\!_{(\mathit{Q}_1,\mathit{Q}_2,.\,.\,.,\,\mathit{Q}_n)}}$ is a monoidal category for any  $n \geq 2$. For the basic notions of monoidal categories, we refer the reader to \cite{Etingof}, \cite{Street}, \cite{Bakalov}, and \cite[Chapter 6]{Borceux2}.\\

\begin{definition} \label{def.5} 
Let $\mathit{Q} = (\mathrm{Q}_{\!_{0}}, \mathrm{Q}_{\!_{1}}, s,t)$, $\mathit{Q}' = (\mathrm{Q}'_{\!_{0}}, \mathrm{Q}'_{\!_{1}}, s',t')$ be quivers, and let  $\{\varrho^{\alpha}_{\beta} \,\,\,\,\,: \,\, \alpha \in \mathrm{Q}_{\!_{1}},\,\, \beta \in \mathrm{Q}'_{\!_{1}} \} $ be a collection of of arrows 
$t(\alpha) \xrightarrow{\varrho^{\alpha}_{\beta}} s'(\beta)$ one for each pair of arrows $(\alpha,\beta) \in \mathrm{Q}_{\!_{1}} \times \mathrm{Q}'_{\!_{1}}$. \\

A \textbf{$2$-quiiver induced by $(\mathit{Q},\mathit{Q}')$} is a quiver $\mathscr{Q}_{\!_{(\mathit{Q},\mathit{Q}')}} = (\tilde{Q}_{\!_{0}}, \tilde{Q}_{\!_{1}}, s'',t'')$, where 

 \begin{itemize}
  \item $\tilde{\mathrm{Q}}_{\!_{0}} = \mathrm{Q}_{\!_{0}} \,\, \sqcup \,\, \mathrm{Q}'_{\!_{0}}$, \\
   \item $\tilde{\mathrm{Q}}_{\!_{1}} = \mathrm{Q}_{\!_{1}} \,\, \sqcup \,\, \mathrm{Q}'_{\!_{1}} \,\, \sqcup \,\, \{\varrho^{\alpha}_{\beta} \,\,\,\,\,: \,\, \alpha \in \mathrm{Q}_{\!_{1}},\,\, \beta \in \mathrm{Q}'_{\!_{1}} \} $,\\
  \item  $s'':\tilde{\mathrm{Q}}_{\!_{1}} \rightarrow \tilde{\mathrm{Q}}_{\!_{0}}$ a map from arrows to vertices, mapping an arrow to its starting
point, \\
\item $t'':\tilde{\mathrm{Q}}_{\!_{1}} \rightarrow \tilde{\mathrm{Q}}_{\!_{0}}$ a map from arrows to vertices, mapping an arrow to its terminal point.\\
\end{itemize}

The notation $\sqcup$ above denotes the disjoint union.  \\

The above definition turns out that one can inductively define $n$-quivers for any integer $n \geq 2$. \\
For any  $m \in \{2,\,\,..., \,\,n\}$, let $\mathit{Q}_m = (\mathrm{Q}^{(m)}_{\!_{0}}, \mathrm{Q}^{(m)}_{\!_{1}}, s^{(m)},t^{(m)})$ be a quiver, and let $\{\varrho_{\!_{m}\gamma^{(m-1)}}^{\gamma^{(m)}} \,\,\,\,\,: \,\,\,\,\,\, \gamma^{(m)} \in \mathrm{Q}^{(m)}_{\!_{1}}, \,\,\,\, m \in \{2,\,\,..., \,\,n\} \}$ be a collection of of arrows \\
\begin{center}
•$t^{(m-1)} \xrightarrow{\varrho_{\!_{m}\gamma^{(m-1)}}^{\gamma^{(m)}}} s^{(m)}$ 
\end{center}

\vspace{.25cm}

one for each pair of arrows  $(\gamma^{(m-1)},\gamma^{(m)}) \in \mathrm{Q}^{(m-1)}_{\!_{1}} \times  \mathrm{Q}^{(m)}_{\!_{1}}$. 
 An \textbf{$n$-quiver induced by} $(\mathit{Q}_1, \,\, \mathit{Q}_2, \,\,... \,\, ,  \mathit{Q}_n)$ is
a quiver $\mathscr{Q}_{\!_{(\mathit{Q}_1, \mathit{Q}_2,...,  \mathit{Q}_n)}} = (\hat{\mathrm{Q}_{\!_{0}}}, \hat{\mathrm{Q}_{\!_{1}}}, \hat{s},\hat{t})$, where \\
 \begin{itemize}
  \item $\hat{\mathrm{Q}_{\!_{0}}} = \sqcup_{m=1}^{n} \mathrm{Q}^{(m)}_{\!_{0}}$, \\
   \item $\hat{\mathrm{Q}_{\!_{1}}} = \sqcup_{m=1}^{n} \mathrm{Q}^{(m)}_{\!_{1}} \,\, \sqcup \,\, \{\varrho_{\!_{m}\gamma^{(m-1)}}^{\gamma^{(m)}} \,\,\,\,\,: \,\,\,\,\,\, \gamma^{(m)} \in \mathrm{Q}^{(m)}_{\!_{1}}, \,\,\,\, m \in \{2,\,\,..., \,\,n\} \} $,\\
  \item  $\hat{s}: \hat{\mathrm{Q}_{\!_{1}}} \rightarrow \hat{\mathrm{Q}_{\!_{0}}}$ a map from arrows to vertices, mapping an arrow to its starting
point, \\
\item $\hat{t}: \hat{\mathrm{Q}_{\!_{1}}} \rightarrow \hat{\mathrm{Q}_{\!_{0}}}$ a map from arrows to vertices, mapping an arrow to its terminal point.\\
\end{itemize}

\end{definition} 

\begin{example}

\begin{equation} \label{diag.eq32}
\xymatrix{
&&\\
\mathit{Q}: & 1 \ar[rr]^{} && 2 
}
\hspace{30pt}
\xymatrix{
&&\\
\mathit{Q}': & 3  \ar@/_2.6pc/[rr]^{} \ar[rr]^{} \ar@/^2.6pc/[rr]^{}  & & 4 
}
\hspace{30pt}
\xymatrix{
&5 \ar[dr]^{} & &\\
\mathit{Q}'':&&6  &7 \ar[l]^{}\\
&8  \ar[ur]^{} &&
}
\end{equation}

\begin{equation} \label{diag.eq33}
\xymatrix{
&&\\
\mathscr{Q}_{\!_{(\mathit{Q},\mathit{Q}')}} : & 1 \ar[rr]^{} & &2 
\ar@/_2.6pc/[rrrr]^{}  \ar@/^2.6pc/[rrrr]^{} \ar[rrrr]^{}
 & &&& 3  \ar@/_2.6pc/[rr]^{} \ar[rr]^{} \ar@/^2.6pc/[rr]^{}  & & 4
}
\end{equation}

\begin{equation} \label{diag.eq33.1}
\xymatrix{
&&&&&&&&&5  \ar@/^1.pc/[ddr]^(.4){} & &\\
       &&&&&&&&&\\
\mathscr{Q}_{\!_{(\mathit{Q}'',\mathit{Q},\mathit{Q}')}}: &4  & & 3  \ar@/_2.6pc/[ll]_{} \ar[ll]_{} \ar@/^2.6pc/[ll]^{} &&& 2  \ar@/_5pc/[lll]_{} \ar[lll]_{} \ar@/^5pc/[lll]^{}   
        &1 \ar[l]_{}  &&&6 \ar@/_3pc/[lll]_(.6){} 
              \ar@/^3pc/[lll]^(.6){}
               \ar[lll]_(.5){}
              &7 \ar[l]_{} & &\\
         &&&&&&&&&\\
&&&&&&&&&8  \ar@/_1.pc/[uur]_{} &&&&&
}
\end{equation}

We have $k(\mathit{Q}) \cong \begin{bmatrix}
       k & k           \\[0.3em]
       0 & k        \\[0.3em]
             \end{bmatrix} $,  $k(\mathit{Q}') \cong \begin{bmatrix}
       k & k^3          \\[0.3em]
       0 & k            \\[0.3em]
                   \end{bmatrix} $,  $k(\mathscr{Q}_{\!_{(\mathit{Q},\mathit{Q}')}}) \cong \begin{bmatrix}
       k &  k & k^3 & k^9         \\[0.3em]
       0 &  k & k^3 & k^9         \\[0.3em]
       0 & 0 & k & k^3         \\[0.3em]
       0 &  0 & 0 & k         \\[0.3em]
                   \end{bmatrix} $, and  
                      
$k(\mathscr{Q}_{\!_{(\mathit{Q}'',\mathit{Q},\mathit{Q}')}})
\cong \begin{bmatrix}
       k &  k & k^3 & k^9 & 0 & 0 & 0 & 0         \\[0.3em]
       0 &  k & k^3 & k^9 & 0 & 0 & 0 & 0        \\[0.3em]
       0 & 0 & k & k^3    & 0 & 0 & 0 & 0       \\[0.3em]
       0 &  0 & 0 & k     & 0 & 0 & 0 & 0    \\[0.3em]
       k^3 &  k^3 & k^9 & k^{27}     & k & k & 0 & 0    \\[0.3em]
       k^3 &  k^3 & k^9 & k^{27}     & 0 & k & 0 & 0    \\[0.3em]
       k^3 &  k^3 & k^9 & k^{27}     & 0 & k & k & 0    \\[0.3em]
       k^3 &  k^3 & k^9 & k^{27}     & 0 & k & 0 & k    \\[0.3em]
                   \end{bmatrix}  $. \\
                   
 \vspace{.2cm}                   
                   
Consider the following:

\begin{equation} \label{diag.eq34}
\xymatrix{
&&\\
V: \,\, \,\, k \ar[rr]^{1} && k 
}
\hspace{30pt}
\xymatrix{
&&\\
V': \,\, \,\, k  \ar@/_2.6pc/[rr]^{1} \ar[rr]^{1} \ar@/^2.6pc/[rr]^{1}  & & k 
}
\hspace{30pt}
\xymatrix{
&k \ar[dr]^{\begin{bmatrix}
       1            \\[0.3em]
       0         \\[0.3em]
       \end{bmatrix}} & &\\
V'': &&k^2  &k \ar[l]_{\begin{bmatrix}
       1            \\[0.3em]
       1         \\[0.3em]
       \end{bmatrix}}\\
&k  \ar[ur]_{\begin{bmatrix}
       0            \\[0.3em]
       1         \\[0.3em]
       \end{bmatrix}} &&
}
\end{equation}

Then $V, \, V', \, V''$  are clearly representations of $\mathit{Q}$, $\mathit{Q}'$, $\mathit{Q}''$ respectively. \\
Now, consider the following:
\begin{equation} \label{diag.eq35}
\xymatrix{
&&\\
\bar{V} : & k \ar[rr]^{1} & &k 
\ar@/_2.6pc/[rrrr]^{1}  \ar@/^2.6pc/[rrrr]^{1} \ar[rrrr]^{0}
 & &&& k  \ar@/_2.6pc/[rr]^{1} \ar[rr]^{1} \ar@/^2.6pc/[rr]^{1}  & & k
}
\end{equation}

\begin{equation} \label{diag.eq35.1}
\xymatrix{
&&&&&&&&&k  \ar@/^1.pc/[ddr]^(.4){\begin{bmatrix}
       1            \\[0.3em]
       0         \\[0.3em]
       \end{bmatrix}} & &\\
       &&&&&&&&&\\
\underline{V} &k  & & k  \ar@/_2.6pc/[ll]_{1} \ar[ll]_{0} \ar@/^2.6pc/[ll]^{1} &&& k  \ar@/_5pc/[lll]_{1} \ar[lll]_{0} \ar@/^5pc/[lll]^{1}   
        &k \ar[l]_{1}  &&&k^2 \ar@/_3pc/[lll]_(.6){\begin{bmatrix}
       1 & 0       \\[0.3em]
              \end{bmatrix}} 
              \ar@/^3pc/[lll]^(.6){\begin{bmatrix}
       0 & 1       \\[0.3em]
              \end{bmatrix}}
               \ar[lll]_(.5){\begin{bmatrix}
       0 & 0       \\[0.3em]
              \end{bmatrix}}
              &k \ar[l]_{\begin{bmatrix}
       1            \\[0.3em]
       1         \\[0.3em]
       \end{bmatrix}} & &\\
         &&&&&&&&&\\
&&&&&&&&&k  \ar@/_1.pc/[uur]_{\begin{bmatrix}
       0            \\[0.3em]
       1         \\[0.3em]
       \end{bmatrix}} &&&&&
}
\end{equation}\\

Then $\bar{V}$ is a representations of $\mathscr{Q}_{\!_{(\mathit{Q},\mathit{Q}')}}$ and $\underline{V}$ is a representation of $\mathscr{Q}_{\!_{(\mathit{Q}'',\mathit{Q},\mathit{Q}')}}$.\\
\end{example}

\begin{remark} \textbf{•} \label{r.n.quivers.interms.components} 

\begin{enumerate}[label=(\roman*)]
\item In general, if $\Omega_n = \{\varrho_{\!_{n}\gamma^{(n-1)}}^{\gamma^{(n)}} \,\,\,\,\,: \,\,\,\,\,\, \gamma^{(n-1)}\in \mathrm{Q}^{(n-1)}_{\!_{1}}, \,\,\, \gamma^{(n)} \in \mathrm{Q}^{(n)}_{\!_{1}}\} $, then we have\\

  $$ k\mathscr{Q}_{\!_{(\mathit{Q}_1, \mathit{Q}_2,...,  \mathit{Q}_n)}} \cong 
\begin{bmatrix}

  k\mathscr{Q}_{\!_{(\mathit{Q}_1, \mathit{Q}_2,...,  \mathit{Q}_{n-1})}}  &  &  k \Omega_n \\
  &  &\\
   0 &  & k\mathit{Q}_n\\
\end{bmatrix} \,\,\,\,\,\,\,\,\,\,\, \text{or} \,\,\,\,\,\,\,\,\,\,\,\, k\mathscr{Q}_{\!_{(\mathit{Q}_1, \mathit{Q}_2,...,  \mathit{Q}_n)}} \cong 
\begin{bmatrix}

  k\mathscr{Q}_{\!_{(\mathit{Q}_1, \mathit{Q}_2,...,  \mathit{Q}_{n-1})}}  &  & 0 \\
  &  &\\
    k \Omega_n &  & k\mathit{Q}_n\\
\end{bmatrix}
$$  \\

where $k \Omega $ is the subspace of $ k\mathscr{Q}_{\!_{(\mathit{Q}_1, \mathit{Q}_2,...,  \mathit{Q}_n)}} $ generated by the set $\Omega_n$. To avoid confusion, we identify $k\mathscr{Q}_{\!_{(\mathit{Q}_1)}}$ as  $k\mathcal{Q}_1$.\\

\item We note that $\bar{V}$ in the previous example can be identified as a birepresentations of $(\mathit{Q},\mathit{Q}')$. Indeed, every representation of $\mathscr{Q}_{\!_{(\mathit{Q},\mathit{Q}')}}$ can be identified as a birepresentation of $(\mathit{Q},\mathit{Q}')$. Conversely, every birepresentation of $(\mathit{Q},\mathit{Q}')$ can be viewed as a representation of $\mathscr{Q}_{\!_{(\mathit{Q},\mathit{Q}')}}$. Similarly, $\underline{V}$ can be identified as a $3$-representation of $\mathscr{Q}_{\!_{(\mathit{Q}'',\mathit{Q},\mathit{Q}')}}$.  This clearly gives rise to the following proposition. \\
\end{enumerate}
\end{remark}

\begin{proposition} \label{p.9}  
There exists an equivalence of categories $Rep_{\!_{(\mathit{Q},\mathit{Q}')}}  \simeq Rep_k(\mathscr{Q}_{\!_{(\mathit{Q},\mathit{Q}')}})$ that restricts to an equivalence  $rep_{\!_{(\mathit{Q},\mathit{Q}')}}  \simeq rep_k(\mathscr{Q}_{\!_{(\mathit{Q},\mathit{Q}')}})$.\\

\end{proposition}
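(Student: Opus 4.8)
The plan is to construct a pair of mutually inverse functors between $Rep_k(\mathscr{Q}_{\!_{(\mathit{Q},\mathit{Q}')}})$ and $Rep_{\!_{(\mathit{Q},\mathit{Q}')}}$, which will in fact give an isomorphism of categories and a fortiori the desired equivalence. The guiding observation is purely structural: by Definition (\ref{def.5}) the vertex set of $\mathscr{Q}_{\!_{(\mathit{Q},\mathit{Q}')}}$ is $\mathrm{Q}_{\!_{0}} \sqcup \mathrm{Q}'_{\!_{0}}$ and its arrow set is $\mathrm{Q}_{\!_{1}} \sqcup \mathrm{Q}'_{\!_{1}} \sqcup \{\varrho^{\alpha}_{\beta}\}$, so a representation of $\mathscr{Q}_{\!_{(\mathit{Q},\mathit{Q}')}}$ amounts to a family of vector spaces indexed by $\mathrm{Q}_{\!_{0}} \sqcup \mathrm{Q}'_{\!_{0}}$ together with linear maps for the three kinds of arrows, which is precisely the data of a birepresentation in the sense of Definition (\ref{def.1}).

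First I would define $\Phi : Rep_k(\mathscr{Q}_{\!_{(\mathit{Q},\mathit{Q}')}}) \to Rep_{\!_{(\mathit{Q},\mathit{Q}')}}$. Given a representation $N$ of $\mathscr{Q}_{\!_{(\mathit{Q},\mathit{Q}')}}$ with spaces $N_v$ and maps $N_\gamma$, set $V := (N_i, N_\alpha)_{i \in \mathrm{Q}_{\!_{0}},\, \alpha \in \mathrm{Q}_{\!_{1}}}$, $V' := (N_{i'}, N_\beta)_{i' \in \mathrm{Q}'_{\!_{0}},\, \beta \in \mathrm{Q}'_{\!_{1}}}$, and $\psi^{\alpha}_{\beta} := N_{\varrho^{\alpha}_{\beta}} : N_{t(\alpha)} \to N_{s'(\beta)}$; then $V \in Rep_k(\mathit{Q})$, $V' \in Rep_k(\mathit{Q}')$, and $\Phi(N) := (V, V', \psi)$ is a birepresentation of $(\mathit{Q},\mathit{Q}')$. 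On a morphism $f = (f_v)_v : N \to \tilde N$ put $\Phi(f) := ((f_i)_{i \in \mathrm{Q}_{\!_{0}}}, (f_{i'})_{i' \in \mathrm{Q}'_{\!_{0}}})$. The one thing to verify is that $\Phi(f)$ really is a morphism of birepresentations (Definition (\ref{def.2})): this amounts to checking that the naturality square of $f$ at an arrow $\alpha \in \mathrm{Q}_{\!_{1}}$ (resp. $\beta \in \mathrm{Q}'_{\!_{1}}$, resp. $\varrho^{\alpha}_{\beta}$) is exactly the commutativity of the front face (resp. the back face, resp. the connecting vertical face) of the cube in diagram (\ref{diag.eq010}), together with the fact that commutativity of that cube is equivalent to commutativity of all of its faces. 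Functoriality of $\Phi$ is then immediate, since composition on either side is computed vertex by vertex.

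Next I would define the candidate inverse $\Psi : Rep_{\!_{(\mathit{Q},\mathit{Q}')}} \to Rep_k(\mathscr{Q}_{\!_{(\mathit{Q},\mathit{Q}')}})$ by reassembling the data: for $\bar V = (V, V', \psi)$ with $V = (V_i, \phi_\alpha)$ and $V' = (V'_{i'}, \phi'_\beta)$, let $\Psi(\bar V)$ be the representation of $\mathscr{Q}_{\!_{(\mathit{Q},\mathit{Q}')}}$ with $N_i = V_i$, $N_{i'} = V'_{i'}$, $N_\alpha = \phi_\alpha$, $N_\beta = \phi'_\beta$ and $N_{\varrho^{\alpha}_{\beta}} = \psi^{\alpha}_{\beta}$, and define $\Psi$ on morphisms in the same component-wise fashion. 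One then checks that $\Phi\Psi = \mathrm{id}$ and $\Psi\Phi = \mathrm{id}$ on objects and on morphisms; this is essentially automatic, since both functors merely repackage the same underlying families of vector spaces and linear maps along the disjoint-union decomposition of Definition (\ref{def.5}) (compare the block-matrix description of $k\mathscr{Q}_{\!_{(\mathit{Q},\mathit{Q}')}}$ in Remark (\ref{r.n.quivers.interms.components})). Hence $\Phi$ is an isomorphism of categories, in particular an equivalence $Rep_{\!_{(\mathit{Q},\mathit{Q}')}} \simeq Rep_k(\mathscr{Q}_{\!_{(\mathit{Q},\mathit{Q}')}})$.

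For the restriction, note that under $\Phi$ the vector space attached to a vertex of $\mathscr{Q}_{\!_{(\mathit{Q},\mathit{Q}')}}$ is literally a component of $V$ or of $V'$; thus $N$ is finite-dimensional if and only if every $V_i$ and every $V'_{i'}$ is finite-dimensional, i.e. if and only if $\Phi(N)$ is a finite-dimensional birepresentation. Since $rep_k(\mathscr{Q}_{\!_{(\mathit{Q},\mathit{Q}')}})$ and $rep_{\!_{(\mathit{Q},\mathit{Q}')}}$ are by definition the full subcategories on the finite-dimensional objects, $\Phi$ and $\Psi$ restrict to mutually inverse functors between them, giving $rep_{\!_{(\mathit{Q},\mathit{Q}')}} \simeq rep_k(\mathscr{Q}_{\!_{(\mathit{Q},\mathit{Q}')}})$. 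I do not expect a genuine obstacle in this argument; the only step needing care is the bookkeeping identification of the three-dimensional commuting diagram (\ref{diag.eq010}) defining a morphism of birepresentations with the family of naturality squares defining a morphism of representations of $\mathscr{Q}_{\!_{(\mathit{Q},\mathit{Q}')}}$ — that is, confirming that the cube commutes precisely when each of its faces does, so that neither side imposes an extra condition nor drops one.
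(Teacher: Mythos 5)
Your proposal is correct and follows essentially the same route as the paper: the paper's proof likewise exhibits a "decomposing" functor and a "gluing" functor between $Rep_{\!_{(\mathit{Q},\mathit{Q}')}}$ and $Rep_k(\mathscr{Q}_{\!_{(\mathit{Q},\mathit{Q}')}})$ that merely repackage the same vector spaces and linear maps along the disjoint-union decomposition of Definition (\ref{def.5}). Your write-up simply makes explicit what the paper leaves as a sketch, namely the verification that the morphism conditions on both sides coincide and that the equivalence (indeed isomorphism) restricts to the finite-dimensional subcategories.
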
 

\begin{proof}
This follows from the construction of $\mathscr{Q}_{\!_{(\mathit{Q},\mathit{Q}')}}$ in Definition ((\ref{def.5})). In fact, there is a decomposing functor $ \mathcal{F}: Rep_{\!_{(\mathit{Q},\mathit{Q}')}} \rightarrow  Rep_k(\mathscr{Q}_{\!_{(\mathit{Q},\mathit{Q}')}})$  which identify any representation of $\mathscr{Q}_{\!_{(\mathit{Q},\mathit{Q}')}}$ as  $\mathit{Q},\,\,\, \mathit{Q}'$ and a collection of compatibility maps and assign it to a birepresentation of $(\mathit{Q},\mathit{Q}')$. Its inverse is the gluing functor $\mathcal{G}: Rep_k(\mathscr{Q}_{\!_{(\mathit{Q},\mathit{Q}')}})  \rightarrow Rep_{\!_{(\mathit{Q},\mathit{Q}')}} $, which view each birepresentation as three parts (two representations of $\mathit{Q},\,\,\, \mathit{Q}'$ and a collection of compatibility maps) and assemble them for a representation of $\mathscr{Q}_{\!_{(\mathit{Q},\mathit{Q}')}}$.\\

\end{proof}

Using Induction and Remark (\ref{r.inductive}), we have the following.\\

\begin{proposition} \label{p.9.1}  
For any $n \geq 2$, there exists an equivalence of categories $Rep_{\!_{(\mathit{Q}_1,\mathit{Q}_2,...,\mathit{Q}_n)}} \simeq Rep_k(\mathscr{Q}_{\!_{(\mathit{Q}_1, \,\, \mathit{Q}_2,...,  \mathit{Q}_n)}})$ that restricts to an equivalence  $rep_{\!_{(\mathit{Q}_1,\mathit{Q}_2,...,\,\mathit{Q}_n)}}  \simeq rep_k(\mathscr{Q}_{\!_{(\mathit{Q}_1,\mathit{Q}_2,...,\mathit{Q}_n)}})$.\\

\end{proposition}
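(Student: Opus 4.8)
The plan is to argue by induction on $n$, using Proposition \ref{p.9} as the base case $n=2$ and Remark \ref{r.inductive}(ii) as the bridge between consecutive levels. First I would fix the inductive hypothesis: assume that for some $n-1 \geq 2$ there is an equivalence $\mathcal{F}_{n-1}: Rep_{\!_{(\mathit{Q}_1,\ldots,\mathit{Q}_{n-1})}} \xrightarrow{\ \simeq\ } Rep_k(\mathscr{Q}_{\!_{(\mathit{Q}_1,\ldots,\mathit{Q}_{n-1})}})$ restricting to $rep$. Then I would invoke Remark \ref{r.n.quivers.interms.components}(i), which exhibits $k\mathscr{Q}_{\!_{(\mathit{Q}_1,\ldots,\mathit{Q}_n)}}$ as the triangular matrix algebra $\bigl[\begin{smallmatrix} k\mathscr{Q}_{\!_{(\mathit{Q}_1,\ldots,\mathit{Q}_{n-1})}} & k\Omega_n \\ 0 & k\mathit{Q}_n \end{smallmatrix}\bigr]$; this says that a representation of the $n$-quiver is precisely a representation of $\mathscr{Q}_{\!_{(\mathit{Q}_1,\ldots,\mathit{Q}_{n-1})}}$, a representation of $\mathit{Q}_n$, and a compatible family of linking maps along the arrows $\varrho_{\!_{n}\gamma^{(n-1)}}^{\gamma^{(n)}}$.

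Next I would define the decomposing functor $\mathcal{F}_n: Rep_{\!_{(\mathit{Q}_1,\ldots,\mathit{Q}_n)}} \rightarrow Rep_k(\mathscr{Q}_{\!_{(\mathit{Q}_1,\ldots,\mathit{Q}_n)}})$ on an $n$-representation $\bar V = (V^{(1)},\ldots,V^{(n)},\psi_{\!_1},\ldots,\psi_{\!_{n-1}})$ by: apply $\mathcal{F}_{n-1}$ to the underlying $(n-1)$-representation $(V^{(1)},\ldots,V^{(n-1)},\psi_{\!_1},\ldots,\psi_{\!_{n-2}})$ to get a representation of $\mathscr{Q}_{\!_{(\mathit{Q}_1,\ldots,\mathit{Q}_{n-1})}}$, keep $V^{(n)}$ as the representation on the $\mathit{Q}_n$-part, and use the collection $\psi_{\!_{n-1}} = (\psi_{\!_{n}\gamma^{(n-1)}}^{\gamma^{(n)}})$ of linear maps $V^{(n-1)}_{t(\gamma^{(n-1)})} \to V^{(n)}_{s(\gamma^{(n)})}$ as the maps assigned to the new arrows $\varrho_{\!_{n}\gamma^{(n-1)}}^{\gamma^{(n)}}$. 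On morphisms, send $\bar f = (f^{(1)},\ldots,f^{(n)})$ to $\mathcal{F}_{n-1}(f^{(1)},\ldots,f^{(n-1)})$ glued with $f^{(n)}$; the commutativity squares in diagram \eqref{diag.eq010.1} for $m=n$ are exactly the squares required for this to be a morphism of representations of the $n$-quiver. Conversely, the gluing functor $\mathcal{G}_n$ takes a representation of $\mathscr{Q}_{\!_{(\mathit{Q}_1,\ldots,\mathit{Q}_n)}}$, restricts it to the sub-quiver $\mathscr{Q}_{\!_{(\mathit{Q}_1,\ldots,\mathit{Q}_{n-1})}}$ and applies $\mathcal{G}_{n-1}$, restricts it to $\mathit{Q}_n$, and reads off the maps on the $\varrho$-arrows as the $\psi_{\!_{n-1}}$ data. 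One then checks $\mathcal{F}_n \mathcal{G}_n = \mathrm{id}$ and $\mathcal{G}_n \mathcal{F}_n = \mathrm{id}$ (they are mutually inverse on the nose, not merely up to natural isomorphism, exactly as in Proposition \ref{p.9}), and notes that both functors visibly preserve finite-dimensionality, giving the restricted equivalence on $rep$.

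The genuinely routine part is the bookkeeping: matching the index conventions in Definition \ref{def.1} (where the linking maps $\psi_{\!_{m}\gamma^{(m-1)}}^{\gamma^{(m)}}$ go from $V^{(m-1)}_{t(\gamma^{(m-1)})}$ to $V^{(m)}_{s(\gamma^{(m)})}$) against the source/target maps $\hat s,\hat t$ of Definition \ref{def.5} (where $\varrho_{\!_{m}\gamma^{(m-1)}}^{\gamma^{(m)}}$ runs from $t^{(m-1)}$ to $s^{(m)}$), and verifying that the commutative-diagram condition for morphisms of $n$-representations decomposes as the morphism condition for representations of $\mathscr{Q}_{\!_{(\mathit{Q}_1,\ldots,\mathit{Q}_{n-1})}}$ together with the new squares on the $\varrho$-arrows. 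The only place where anything needs care — and thus the main obstacle — is ensuring that the inductive step is truly \emph{only} about the top layer: one must confirm, via Remark \ref{r.inductive}(ii), that an $n$-representation's data splits cleanly as ``an $(n-1)$-representation'' plus ``$V^{(n)}$ and $\psi_{\!_{n-1}}$'' with no hidden interaction between $V^{(n)}$ and the earlier $\psi_{\!_j}$ for $j < n-1$, and correspondingly that the $n$-quiver's arrow set adds only the $\varrho$-arrows between the $(n-1)$-st and $n$-th blocks. Both facts are immediate from the definitions, so once the $n=2$ case (Proposition \ref{p.9}) is in hand the induction goes through without further difficulty.
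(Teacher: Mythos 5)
Your proposal is correct and follows essentially the same route the paper intends: induction on $n$ with Proposition \ref{p.9} as the base case, Remark \ref{r.inductive} to peel off the top layer, and the same decomposing/gluing functors that identify the data $(V^{(n)},\psi_{\!_{n-1}})$ with the $\mathit{Q}_n$-part and the $\varrho$-arrow maps of a representation of $\mathscr{Q}_{\!_{(\mathit{Q}_1,\ldots,\mathit{Q}_n)}}$. Your write-up in fact supplies more detail (the explicit inductive construction and the bookkeeping of the morphism squares) than the paper records, but there is no substantive difference in approach.
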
 

\begin{proof}

\end{proof}

The following is an immediate consequence of Proposition (\ref{p.9}) and Proposition (\ref{p.Equiv.Rep.Mod}).\\

\begin{proposition} \label{p.10}  
There exists an equivalence of categories $Mod \, k \mathscr{Q}_{\!_{(\mathit{Q},\mathit{Q}')}} \simeq Rep_{\!_{(\mathit{Q},\mathit{Q}')}}$ that restricts to an equivalence $mod \, k \mathscr{Q}_{\!_{(\mathit{Q},\mathit{Q}')}} \simeq rep_{\!_{(\mathit{Q},\mathit{Q}')}}$.  \\
\end{proposition}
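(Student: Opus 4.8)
The plan is to obtain Proposition \ref{p.10} by simply composing two equivalences already available in the excerpt. Indeed, Proposition \ref{p.9} furnishes an equivalence of categories $\mathcal{G}: Rep_k(\mathscr{Q}_{\!_{(\mathit{Q},\mathit{Q}')}}) \simeq Rep_{\!_{(\mathit{Q},\mathit{Q}')}}$ which restricts to an equivalence $rep_k(\mathscr{Q}_{\!_{(\mathit{Q},\mathit{Q}')}}) \simeq rep_{\!_{(\mathit{Q},\mathit{Q}')}}$. On the other hand, since $\mathscr{Q}_{\!_{(\mathit{Q},\mathit{Q}')}}$ is, by construction in Definition \ref{def.5}, a finite quiver, and — as I would check — it is also connected and acyclic whenever $\mathit{Q},\mathit{Q}'$ are (the new arrows $\varrho^{\alpha}_{\beta}$ all point from the $\mathit{Q}$-part into the $\mathit{Q}'$-part, so no new cycles are created and the two components are joined), Proposition \ref{p.Equiv.Rep.Mod} applies to $\mathscr{Q}_{\!_{(\mathit{Q},\mathit{Q}')}}$ and yields an equivalence $\Phi: Mod \, k\mathscr{Q}_{\!_{(\mathit{Q},\mathit{Q}')}} \simeq Rep_k(\mathscr{Q}_{\!_{(\mathit{Q},\mathit{Q}')}})$ restricting to $mod \, k\mathscr{Q}_{\!_{(\mathit{Q},\mathit{Q}')}} \simeq rep_k(\mathscr{Q}_{\!_{(\mathit{Q},\mathit{Q}')}})$.

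The key steps, in order, are: first, verify that $\mathscr{Q}_{\!_{(\mathit{Q},\mathit{Q}')}}$ is finite, connected and acyclic, so that Proposition \ref{p.Equiv.Rep.Mod} is applicable — finiteness is immediate from the definition of $\tilde{\mathrm{Q}}_{\!_{0}}$ and $\tilde{\mathrm{Q}}_{\!_{1}}$ as finite disjoint unions, connectedness follows because every pair $(\alpha,\beta)$ contributes an arrow linking a vertex of $\mathit{Q}$ to a vertex of $\mathit{Q}'$ (assuming $\mathit{Q}_1$ is nonempty, which is part of the standing hypotheses), and acyclicity follows since the added arrows only go one way between the two blocks. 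Second, invoke Proposition \ref{p.Equiv.Rep.Mod} to get $Mod \, k\mathscr{Q}_{\!_{(\mathit{Q},\mathit{Q}')}} \simeq Rep_k(\mathscr{Q}_{\!_{(\mathit{Q},\mathit{Q}')}})$, compatibly with the finite-dimensional/finitely-generated subcategories. Third, compose with the equivalence $Rep_k(\mathscr{Q}_{\!_{(\mathit{Q},\mathit{Q}')}}) \simeq Rep_{\!_{(\mathit{Q},\mathit{Q}')}}$ of Proposition \ref{p.9}, noting that composition of equivalences is an equivalence and that the restrictions to the respective finite subcategories compose to the asserted restricted equivalence $mod \, k\mathscr{Q}_{\!_{(\mathit{Q},\mathit{Q}')}} \simeq rep_{\!_{(\mathit{Q},\mathit{Q}')}}$.

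I do not anticipate a serious obstacle here; this is essentially a one-line consequence of transitivity of categorical equivalence. The only point requiring genuine (if brief) attention is confirming that $\mathscr{Q}_{\!_{(\mathit{Q},\mathit{Q}')}}$ really does satisfy the hypotheses of Proposition \ref{p.Equiv.Rep.Mod}; in particular, the reader should be reassured that adjoining the compatibility arrows preserves acyclicity and does not accidentally disconnect or make the quiver infinite. Once that is noted, the proof is a direct appeal to Proposition \ref{p.9} and Proposition \ref{p.Equiv.Rep.Mod}, exactly as the sentence preceding the statement already indicates.
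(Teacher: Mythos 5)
Your proposal is correct and is exactly the paper's intended argument: the paper states the result as "an immediate consequence of Proposition (\ref{p.9}) and Proposition (\ref{p.Equiv.Rep.Mod})" and leaves the proof to this composition of equivalences. Your additional check that $\mathscr{Q}_{\!_{(\mathit{Q},\mathit{Q}')}}$ is finite, connected and acyclic (so that Proposition \ref{p.Equiv.Rep.Mod} genuinely applies) is a point the paper glosses over, and is worth the brief attention you give it.
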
 

\begin{proof}

\end{proof}

By Remark (\ref{r.inductive}) and Propositions (\ref{p.9.1}),  (\ref{p.Equiv.Rep.Mod}), we have the following.\\

\begin{proposition} \label{p.10.1}  
For any $n \geq 2$, there exists an equivalence of categories $Mod \, k \mathscr{Q}_{\!_{(\mathit{Q}_1,\mathit{Q}_2,...,\mathit{Q}_n)}} \simeq Rep_{\!_{(\mathit{Q}_1,\mathit{Q}_2,...,\mathit{Q}_n)}}$ that restricts to an equivalence $mod \, k \mathscr{Q}_{\!_{(\mathit{Q}_1,\mathit{Q}_2,...,\mathit{Q}_n)}} \simeq rep_{\!_{(\mathit{Q}_1,\mathit{Q}_2,...,\mathit{Q}_n)}}$.  \\
\end{proposition}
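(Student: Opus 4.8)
The plan is to chain together the two equivalences already established in the excerpt, exactly as the sketch preceding the statement indicates. By Proposition \ref{p.9.1}, for any $n \geq 2$ there is an equivalence of categories $Rep_{\!_{(\mathit{Q}_1,\mathit{Q}_2,...,\mathit{Q}_n)}} \simeq Rep_k(\mathscr{Q}_{\!_{(\mathit{Q}_1, \mathit{Q}_2,...,  \mathit{Q}_n)}})$ which restricts to $rep_{\!_{(\mathit{Q}_1,\mathit{Q}_2,...,\mathit{Q}_n)}} \simeq rep_k(\mathscr{Q}_{\!_{(\mathit{Q}_1,\mathit{Q}_2,...,\mathit{Q}_n)}})$. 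On the other hand, since each $\mathit{Q}_m$ is finite, connected, and acyclic, one first observes that the $n$-quiver $\mathscr{Q}_{\!_{(\mathit{Q}_1,\mathit{Q}_2,...,\mathit{Q}_n)}}$ is again finite, connected, and acyclic: finiteness and connectedness are clear from the construction in Definition \ref{def.5} (the vertex and arrow sets are finite disjoint unions, and the added arrows $\varrho_{\!_{m}\gamma^{(m-1)}}^{\gamma^{(m)}}$ link the blocks together), and acyclicity holds because the blocks are totally ordered $1 < 2 < \cdots < n$ with all connecting arrows pointing from the lower block to the higher block, so a cycle would have to stay inside a single $\mathit{Q}_m$, contradicting acyclicity of $\mathit{Q}_m$. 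Hence Proposition \ref{p.Equiv.Rep.Mod} applies to $\mathscr{Q}_{\!_{(\mathit{Q}_1,\mathit{Q}_2,...,\mathit{Q}_n)}}$, giving $Mod\, k\mathscr{Q}_{\!_{(\mathit{Q}_1,\mathit{Q}_2,...,\mathit{Q}_n)}} \simeq Rep_k(\mathscr{Q}_{\!_{(\mathit{Q}_1,\mathit{Q}_2,...,\mathit{Q}_n)}})$, restricting to the finitely generated / finite-dimensional versions.

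Composing these two equivalences yields
\[
Mod\, k\mathscr{Q}_{\!_{(\mathit{Q}_1,\mathit{Q}_2,...,\mathit{Q}_n)}} \simeq Rep_k(\mathscr{Q}_{\!_{(\mathit{Q}_1, \mathit{Q}_2,...,  \mathit{Q}_n)}}) \simeq Rep_{\!_{(\mathit{Q}_1,\mathit{Q}_2,...,\mathit{Q}_n)}},
\]
and since both equivalences restrict compatibly to the full subcategories of finitely generated modules and finite-dimensional representations, the composite restricts to $mod\, k\mathscr{Q}_{\!_{(\mathit{Q}_1,\mathit{Q}_2,...,\mathit{Q}_n)}} \simeq rep_{\!_{(\mathit{Q}_1,\mathit{Q}_2,...,\mathit{Q}_n)}}$. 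This is precisely the asserted statement, so the proof is essentially a two-line composition argument once the hypotheses of Proposition \ref{p.Equiv.Rep.Mod} are verified for the $n$-quiver. For $n = 2$ this is exactly the content of Proposition \ref{p.10}; the general case is the same argument with Proposition \ref{p.9.1} in place of Proposition \ref{p.9}, as flagged by the remark preceding the statement.

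The only genuine content — and the main thing to be careful about — is checking that the $n$-quiver inherits the standing hypotheses ``finite, connected, and acyclic'' so that Proposition \ref{p.Equiv.Rep.Mod} is legitimately applicable; this is where I would spend a sentence or two rather than treating it as automatic. Everything else is formal: equivalences compose, and full subcategories defined by the same finiteness condition on the underlying vector spaces are matched under the relevant functors. I would therefore write the proof as: invoke Proposition \ref{p.9.1}; note the $n$-quiver is finite, connected, and acyclic; invoke Proposition \ref{p.Equiv.Rep.Mod} for it; compose and observe the restriction to the finite-dimensional subcategories. No induction beyond what is already packaged into Proposition \ref{p.9.1} is needed.
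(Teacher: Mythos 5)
Your proposal is correct and follows exactly the route the paper intends: the paper states the result as an immediate consequence of Remark \ref{r.inductive} together with Propositions \ref{p.9.1} and \ref{p.Equiv.Rep.Mod}, leaving the proof environment empty, and your argument is precisely that composition of equivalences restricted to the finite-dimensional/finitely generated subcategories. Your extra step of verifying that the $n$-quiver $\mathscr{Q}_{\!_{(\mathit{Q}_1,\mathit{Q}_2,...,\mathit{Q}_n)}}$ is finite, connected, and acyclic (so that Proposition \ref{p.Equiv.Rep.Mod} genuinely applies) is a point the paper passes over in silence, so your write-up is if anything more complete than the original.
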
 

\begin{proof}

\end{proof}

We recall the definitions of a co-wellpowered category and a generating set for a category. \\
Let $\mathfrak{E}$ be a class of all epimorphisms of a category $\mathfrak{A}$. Then $\mathfrak{A}$ is called \textit{co-wellpowered} provided that no $\mathfrak{A}$-object has a proper class of pairwise non-isomorphic quotients \cite[p. 125]{Adamek}.  In other words, for every object the quotients form a set \cite[p. 92, 95]{Schubert}. We refer the reader to \cite{Adamek} basics on quotients and co-wellpowered categories.\\

Following  \cite[p. 127]{Mac Lane1}, a set $\mathcal{G}$  of objects of the category $\mathscr{C}$  is said to \textit{generate} $\mathscr{C}$  when any parallel pair $f,g: X \rightarrow Y$  of arrows of $\mathscr{C}$, $ f \neq  g $  implies that there is an $G \in \mathcal{G}$  and an arrow $\alpha:G \rightarrow X$ in $\mathscr{C}$ with $f\alpha \neq g\alpha$  (the term ``generates" is well established but poorly chosen; ``\textit{separates}" would have been better).  For the basic concepts of generating sets, we refer to   \cite{Mac Lane1},  \cite{Adamek}, or \cite{Freyd}. \\

The following proposition immediately follows from  Remark (\ref{r.inductive}), Proposition (\ref{p.10.1}) and the fact that the categories of modules are co-wellpowered with generating sets. 

\begin{proposition} \label{p.11}  
For any $n \geq 2$, the category $Rep_{\!_{(\mathit{Q}_1,\mathit{Q}_2,...,\mathit{Q}_n)}}$  is  co-wellpowered.  \\
\end{proposition}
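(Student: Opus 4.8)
The plan is to transport the property of being co-wellpowered along the equivalence of categories supplied by Proposition \ref{p.10.1}, which gives $Rep_{\!_{(\mathit{Q}_1,\mathit{Q}_2,...,\mathit{Q}_n)}} \simeq Mod \, k\mathscr{Q}_{\!_{(\mathit{Q}_1, \mathit{Q}_2,...,  \mathit{Q}_n)}}$ for every $n \geq 2$. It therefore suffices to establish two things: first, that every module category $Mod \, R$ over a ring $R$ is co-wellpowered; and second, that co-wellpoweredness is invariant under an equivalence of categories. Taking $R = k\mathscr{Q}_{\!_{(\mathit{Q}_1, \mathit{Q}_2,...,  \mathit{Q}_n)}}$ and combining these two facts with Proposition \ref{p.10.1} yields the claim at once.

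For the first point I would argue as follows. Fix a right $R$-module $M$. Up to isomorphism in the category of objects under $M$, every epimorphism with domain $M$ is the canonical projection $M \rightarrow M/K$ for a submodule $K \subseteq M$, and distinct submodules give non-isomorphic quotients. Since every submodule of $M$ is in particular a subset of the underlying set of $M$, the submodules of $M$ form a set; hence the quotient objects of $M$ form a set, i.e.\ $Mod \, R$ is co-wellpowered. (This is the standard fact that module categories, and more generally any Grothendieck category, are well-powered and co-wellpowered.)

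For the second point I would invoke the routine categorical lemma that an equivalence $F : \mathfrak{A} \to \mathfrak{B}$ preserves and reflects epimorphisms --- being full and faithful --- and carries the quotient objects of an object $A$ bijectively (up to isomorphism) onto those of $FA$; consequently $\mathfrak{A}$ is co-wellpowered whenever $\mathfrak{B}$ is. One could, if desired, first reduce to the case $n = 2$ using Remark \ref{r.inductive}, but the module-theoretic description above is uniform in $n$, so this reduction is not needed.

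I do not expect any real obstacle here: the proof is essentially the observation that the already-constructed equivalence $Rep_{\!_{(\mathit{Q}_1,\mathit{Q}_2,...,\mathit{Q}_n)}} \simeq Mod \, k\mathscr{Q}_{\!_{(\mathit{Q}_1, \mathit{Q}_2,...,  \mathit{Q}_n)}}$ transports a well-known property of module categories. The only subtleties worth spelling out are that the epimorphisms being considered are \emph{all} epimorphisms (not merely split or regular ones) and that ``quotient object'' means an isomorphism class of epimorphisms out of a fixed object; both are handled by the standard facts cited above.
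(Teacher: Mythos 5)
Your proposal is correct and follows essentially the same route as the paper, which derives the result from the equivalence $Rep_{\!_{(\mathit{Q}_1,\mathit{Q}_2,...,\mathit{Q}_n)}} \simeq Mod \, k\mathscr{Q}_{\!_{(\mathit{Q}_1, \mathit{Q}_2,...,  \mathit{Q}_n)}}$ of Proposition (\ref{p.10.1}) together with the fact that module categories are co-wellpowered. In fact you supply more detail than the paper does (the submodule--quotient correspondence and the invariance of co-wellpoweredness under equivalence), both of which are correct and worth making explicit.
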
 

\begin{proof}

\end{proof} 

\begin{proposition} \label{p.12}  
For any $n \geq 2$, the category $Rep_{\!_{(\mathit{Q}_1,\mathit{Q}_2,...,\mathit{Q}_n)}}$ has a generating set.  \\
\end{proposition}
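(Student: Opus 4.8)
The plan is to leverage Proposition \ref{p.10.1}, which gives an equivalence of categories $Rep_{\!_{(\mathit{Q}_1,\mathit{Q}_2,...,\mathit{Q}_n)}} \simeq Mod\, k\mathscr{Q}_{\!_{(\mathit{Q}_1,\mathit{Q}_2,...,\mathit{Q}_n)}}$, together with the standard fact that any category of modules over a ring has a generating set. Since an equivalence of categories transports generating sets (the defining separation property of a generator is purely categorical and hence preserved by any equivalence), it suffices to exhibit a generating set on the module side and pull it back.

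First I would recall that for a ring $R$, the regular module $R$ (equivalently, the free module of rank one) is a generator for $Mod\, R$: for any parallel pair $f \neq g: M \to N$ of $R$-module homomorphisms, pick $m \in M$ with $f(m) \neq g(m)$ and let $\alpha: R \to M$ be the map $r \mapsto mr$; then $f\alpha \neq g\alpha$. Thus $\{k\mathscr{Q}_{\!_{(\mathit{Q}_1,\mathit{Q}_2,...,\mathit{Q}_n)}}\}$ is a (one-element) generating set for $Mod\, k\mathscr{Q}_{\!_{(\mathit{Q}_1,\mathit{Q}_2,...,\mathit{Q}_n)}}$. Next, I would invoke the equivalence $\mathscr{G}: Mod\, k\mathscr{Q}_{\!_{(\mathit{Q}_1,\mathit{Q}_2,...,\mathit{Q}_n)}} \xrightarrow{\ \simeq\ } Rep_{\!_{(\mathit{Q}_1,\mathit{Q}_2,...,\mathit{Q}_n)}}$ from Proposition \ref{p.10.1} and take the image of this generating set, i.e. set $\mathcal{G} = \{\mathscr{G}(k\mathscr{Q}_{\!_{(\mathit{Q}_1,\mathit{Q}_2,...,\mathit{Q}_n)}})\}$, a one-element subset of $Rep_{\!_{(\mathit{Q}_1,\mathit{Q}_2,...,\mathit{Q}_n)}}$.

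It then remains to verify that $\mathcal{G}$ generates $Rep_{\!_{(\mathit{Q}_1,\mathit{Q}_2,...,\mathit{Q}_n)}}$. Given a parallel pair $\bar f \neq \bar g : \bar V \to \bar W$ in $Rep_{\!_{(\mathit{Q}_1,\mathit{Q}_2,...,\mathit{Q}_n)}}$, apply a quasi-inverse $\mathscr{F}$ of $\mathscr{G}$ to obtain $\mathscr{F}(\bar f) \neq \mathscr{F}(\bar g)$ (faithfulness of an equivalence forces distinctness), find $\alpha: k\mathscr{Q}_{\!_{(\mathit{Q}_1,\mathit{Q}_2,...,\mathit{Q}_n)}} \to \mathscr{F}(\bar V)$ separating them on the module side, and transport $\alpha$ back via $\mathscr{G}$, using the natural isomorphisms $\mathscr{G}\mathscr{F} \cong \mathrm{id}$ to land in a morphism $G \to \bar V$ out of the generator. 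Since equivalences are full and faithful, the separation $\mathscr{F}(\bar f)\alpha \neq \mathscr{F}(\bar g)\alpha$ yields $\bar f \circ \mathscr{G}(\alpha)' \neq \bar g \circ \mathscr{G}(\alpha)'$ for the corresponding morphism out of $G$.

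I do not expect a serious obstacle here; the only point requiring a little care is the bookkeeping with the natural isomorphisms when transporting the separating morphism $\alpha$ across the equivalence, so that one genuinely obtains an arrow whose domain is the distinguished generator $G$ rather than merely something isomorphic to it — but since a generating set is closed under replacing objects by isomorphic ones, even this is harmless. Alternatively, one could bypass the equivalence-transport argument entirely and instead observe directly, via Remark \ref{r.inductive}(ii) and Proposition \ref{p.9.1}, that the indecomposable projective representations of $\mathscr{Q}_{\!_{(\mathit{Q}_1,\mathit{Q}_2,...,\mathit{Q}_n)}}$ (one for each vertex) form a finite generating set, which also makes the generating set explicit as promised in the introduction.
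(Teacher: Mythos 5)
Your proposal is correct and follows essentially the same route as the paper, which obtains Proposition \ref{p.12} directly from the equivalence $Rep_{\!_{(\mathit{Q}_1,\mathit{Q}_2,...,\mathit{Q}_n)}} \simeq Mod \, k \mathscr{Q}_{\!_{(\mathit{Q}_1,\mathit{Q}_2,...,\mathit{Q}_n)}}$ of Proposition \ref{p.10.1} together with the fact that module categories have generating sets. Your explicit verification that equivalences transport generators, and the remark about the indecomposable projectives, merely fill in details the paper leaves implicit.
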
 

\begin{proof}

\end{proof}

Using Theorem (\ref{p.SAFT}), we have the following.

\begin{proposition} \label{p.13}  
For any $n \geq 2$, the obvious forgetful functor $\mathcal{U}_n: Rep_{\!_{(\mathit{Q}_1,\mathit{Q}_2,...,\mathit{Q}_n)}}  \rightarrow  Rep_k(\mathit{Q}_1) \times Rep_k(\mathit{Q}_2) \times . . . \times Rep_k(\mathit{Q}_n)$ has a right adjoint. Equivalently, for any $n \geq 2$, the concrete category $(Rep_{\!_{(\mathit{Q}_1,\mathit{Q}_2,...,\mathit{Q}_n)}} , \mathcal{U}_n)$ has cofree objects. \\
\end{proposition}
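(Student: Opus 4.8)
The plan is to invoke the dual of the Special Adjoint Functor Theorem (D-SAFT), which is exactly Theorem \ref{p.SAFT} as stated in the excerpt. Recall that this says: if $\mathfrak{A}$ is cocomplete, co-wellpowered and has a generating set, then every cocontinuous functor from $\mathfrak{A}$ to a locally small category has a right adjoint. So the strategy is to verify the three hypotheses for $\mathfrak{A} = Rep_{\!_{(\mathit{Q}_1,\mathit{Q}_2,...,\mathit{Q}_n)}}$ and show that $\mathcal{U}_n$ is cocontinuous into a locally small category; then the right adjoint exists, and by the discussion relating (co)free objects to (right) left adjoints, this is equivalent to saying $(Rep_{\!_{(\mathit{Q}_1,\mathit{Q}_2,...,\mathit{Q}_n)}}, \mathcal{U}_n)$ has cofree objects.

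First I would note that cocompleteness of $Rep_{\!_{(\mathit{Q}_1,\mathit{Q}_2,...,\mathit{Q}_n)}}$ is Proposition \ref{p.8.1}, and that the same proposition tells us $\mathcal{U}_n$ is cocontinuous (preserves all small colimits). Second, co-wellpoweredness is Proposition \ref{p.11}, and the existence of a generating set is Proposition \ref{p.12}. Third, the target category $Rep_k(\mathit{Q}_1) \times Rep_k(\mathit{Q}_2) \times \cdots \times Rep_k(\mathit{Q}_n)$ is locally small, being a finite product of the locally small categories $Rep_k(\mathit{Q}_j)$ (each of which is locally small, e.g. by Proposition \ref{p.Equiv.Rep.Mod} identifying it with a module category). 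With all four ingredients in place, Theorem \ref{p.SAFT} applies directly and yields a right adjoint $\mathscr{C}_n$ to $\mathcal{U}_n$.

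Finally I would translate the conclusion into the language of concrete categories: by the remark in the excerpt that a concrete category $(\mathfrak{A},\mathfrak{U})$ over $\mathfrak{X}$ has cofree objects if and only if the cofree-object functor is a right adjoint to $\mathfrak{U}$, the existence of the right adjoint to $\mathcal{U}_n$ is precisely the statement that $(Rep_{\!_{(\mathit{Q}_1,\mathit{Q}_2,...,\mathit{Q}_n)}}, \mathcal{U}_n)$ has cofree objects. Since $n \geq 2$ was arbitrary, this completes the argument.

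I do not expect any serious obstacle here, since the proposition is essentially an assembly of already-established facts; the only point requiring a word of care is confirming that the codomain of $\mathcal{U}_n$ is locally small, but this is immediate from the equivalences with module categories. The substantive work — cocompleteness and cocontinuity, co-wellpoweredness, and the generating set — has been discharged in Propositions \ref{p.8.1}, \ref{p.11}, and \ref{p.12} respectively, each of which ultimately rests on the equivalence $Rep_{\!_{(\mathit{Q}_1,\mathit{Q}_2,...,\mathit{Q}_n)}} \simeq Rep_k(\mathscr{Q}_{\!_{(\mathit{Q}_1, \mathit{Q}_2,...,  \mathit{Q}_n)}}) \simeq Mod\, k\mathscr{Q}_{\!_{(\mathit{Q}_1, \mathit{Q}_2,...,  \mathit{Q}_n)}}$ from Proposition \ref{p.10.1}.
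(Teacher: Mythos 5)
Your proposal is correct and follows exactly the paper's intended route: the paper derives Proposition \ref{p.13} by applying Theorem \ref{p.SAFT} to the cocomplete, co-wellpowered category $Rep_{\!_{(\mathit{Q}_1,\mathit{Q}_2,...,\mathit{Q}_n)}}$ with a generating set (Propositions \ref{p.8.1}, \ref{p.11}, \ref{p.12}) and the cocontinuous functor $\mathcal{U}_n$ into the locally small product category. Your added remarks on local smallness of the codomain and the translation into the language of cofree objects are exactly the points the paper leaves implicit.
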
 

\begin{proof}

\end{proof}

Let  $CoAlg(Rep_{\!_{(\mathit{Q},\mathit{Q}')}})$ be the category of coalgebras in $Rep_{\!_{(\mathit{Q},\mathit{Q}')}}$. By \cite[p. 30]{Abdulwahid}, the left $k \mathscr{Q}_{\!_{(\mathit{Q},\mathit{Q}')}}$-module coalgebras which are finitely generated as left $k \mathscr{Q}_{\!_{(\mathit{Q},\mathit{Q}')}}$-modules form a system of generators for $CoAlg(Mod \, k \mathscr{Q}_{\!_{(\mathit{Q},\mathit{Q}')}})$. Thus, from Proposition (\ref{p.cowp}) and Theorem (\ref{p.SAFT}), we have the following.\\

\begin{proposition} \label{p.14}  
The forgetful functor ${\mathscr{U}}: CoAlg(Rep_{\!_{(\mathit{Q},\mathit{Q}')}})  \rightarrow  Rep_{\!_{(\mathit{Q},\mathit{Q}')}} $ has a right adjoint. Equivalently, the concrete category $(Rep_{\!_{(\mathit{Q},\mathit{Q}')}} , \mathscr{U})$ has cofree objects. \\
\end{proposition}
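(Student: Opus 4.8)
The plan is to invoke the dual Special Adjoint Functor Theorem (Theorem \ref{p.SAFT}) applied to the forgetful functor $\mathscr{U}: CoAlg(Rep_{\!_{(\mathit{Q},\mathit{Q}')}}) \rightarrow Rep_{\!_{(\mathit{Q},\mathit{Q}')}}$. Since $CoAlg$ of a monoidal category is the same as $CoMon$ of that monoidal category, Proposition \ref{p.cowp} applies directly once we know that $Rep_{\!_{(\mathit{Q},\mathit{Q}')}}$ is a cocomplete, co-wellpowered monoidal category. Thus the first step is to assemble the hypotheses: by Proposition \ref{p.8}, $Rep_{\!_{(\mathit{Q},\mathit{Q}')}}$ is cocomplete; by the discussion preceding Definition \ref{def.5} it is monoidal; and by Proposition \ref{p.11} (specialized to $n=2$) it is co-wellpowered. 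Feeding these into Proposition \ref{p.cowp}(i)--(ii), we conclude that $CoAlg(Rep_{\!_{(\mathit{Q},\mathit{Q}')}})$ is cocomplete and co-wellpowered, and that $\mathscr{U}$ preserves colimits, i.e.\ is cocontinuous.

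The second step is to produce a generating set for $CoAlg(Rep_{\!_{(\mathit{Q},\mathit{Q}')}})$. This is exactly where the remark just before the statement does the work: by Proposition \ref{p.10} we have an equivalence $Rep_{\!_{(\mathit{Q},\mathit{Q}')}} \simeq Mod\, k\mathscr{Q}_{\!_{(\mathit{Q},\mathit{Q}')}}$, which is monoidal (the tensor on $Rep_{\!_{(\mathit{Q},\mathit{Q}')}}$ corresponds to the tensor of $k\mathscr{Q}_{\!_{(\mathit{Q},\mathit{Q}')}}$-modules) and hence induces an equivalence $CoAlg(Rep_{\!_{(\mathit{Q},\mathit{Q}')}}) \simeq CoAlg(Mod\, k\mathscr{Q}_{\!_{(\mathit{Q},\mathit{Q}')}})$. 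By the cited result \cite[p.\ 30]{Abdulwahid}, the finitely generated left $k\mathscr{Q}_{\!_{(\mathit{Q},\mathit{Q}')}}$-module coalgebras form a system of generators for the latter category. Transporting this system of generators across the equivalence yields a generating set for $CoAlg(Rep_{\!_{(\mathit{Q},\mathit{Q}')}})$ (equivalences preserve the property of being a separating/generating set, since they reflect the parallel-pair condition in the definition of a generating set).

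With all three hypotheses of Theorem \ref{p.SAFT} verified --- $CoAlg(Rep_{\!_{(\mathit{Q},\mathit{Q}')}})$ is cocomplete, co-wellpowered, and has a generating set --- and with $\mathscr{U}$ cocontinuous into the locally small category $Rep_{\!_{(\mathit{Q},\mathit{Q}')}}$, the dual Special Adjoint Functor Theorem gives a right adjoint to $\mathscr{U}$. The final step is just to translate: by the discussion in the introduction about concrete categories, having a right adjoint to the faithful forgetful functor is equivalent to $(Rep_{\!_{(\mathit{Q},\mathit{Q}')}}, \mathscr{U})$ having cofree objects, i.e.\ cofree coalgebras in the monoidal category $Rep_{\!_{(\mathit{Q},\mathit{Q}')}}$.

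I expect the only genuine subtlety to be the compatibility of the equivalence $Rep_{\!_{(\mathit{Q},\mathit{Q}')}} \simeq Mod\, k\mathscr{Q}_{\!_{(\mathit{Q},\mathit{Q}')}}$ with the monoidal structures, so that it lifts to an equivalence on categories of coalgebras and transports the generating set correctly; everything else is a bookkeeping application of results already in hand. One should check that the tensor product defined in \eqref{diag.eq29} matches, under the gluing/decomposing functors $\mathcal{F}, \mathcal{G}$ of Proposition \ref{p.9}, the standard tensor of modules over $k\mathscr{Q}_{\!_{(\mathit{Q},\mathit{Q}')}}$ — this is essentially forced by the componentwise definition, but it is the one place where a careless argument could slip. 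Once that is in place, the generating set transports formally, and the rest is immediate.
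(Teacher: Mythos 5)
Your proposal is correct and follows essentially the same route as the paper: the paper's argument (given in the paragraph preceding the proposition) likewise combines the generating set for $CoAlg(Mod\, k\mathscr{Q}_{\!_{(\mathit{Q},\mathit{Q}')}})$ from the cited reference, Proposition \ref{p.cowp} for cocompleteness and co-wellpoweredness of the coalgebra category, and the dual Special Adjoint Functor Theorem (Theorem \ref{p.SAFT}). Your additional attention to the monoidal compatibility of the equivalence $Rep_{\!_{(\mathit{Q},\mathit{Q}')}} \simeq Mod\, k\mathscr{Q}_{\!_{(\mathit{Q},\mathit{Q}')}}$ is a detail the paper leaves implicit, but it does not change the argument.
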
 

\begin{proof}

\end{proof}

Remark (\ref{r.inductive}) implies the following consequence.\\

\begin{proposition} \label{p.14.1}  
For any $n \geq 2$, the forgetful functor $\mathscr{U}_n: CoAlg(Rep_{\!_{(\mathit{Q}_1,\mathit{Q}_2,...,\mathit{Q}_n)}})  \rightarrow  Rep_{\!_{(\mathit{Q}_1,\mathit{Q}_2,...,\mathit{Q}_n)}} $ has a right adjoint. Equivalently, for any $n \geq 2$, the concrete category $(Rep_{\!_{(\mathit{Q}_1,\mathit{Q}_2,...,\mathit{Q}_n)}} , \mathscr{U}_n)$ has cofree objects.\\

\end{proposition}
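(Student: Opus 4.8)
The plan is to reduce the $n$-fold case to the $2$-fold case, Proposition \ref{p.14}, by induction on $n$, using Remark \ref{r.inductive}(ii)–(iii). First I would make explicit the observation already hinted at in Remark \ref{r.inductive}(ii): an $n$-representation of $(\mathit{Q}_1,\dots,\mathit{Q}_n)$ is the same datum as a birepresentation of the pair $(\mathscr{Q}_{\!_{(\mathit{Q}_1,\dots,\mathit{Q}_{n-1})}},\mathit{Q}_n)$, where the first coordinate is the $(n-1)$-quiver built from $\mathit{Q}_1,\dots,\mathit{Q}_{n-1}$ and, by Proposition \ref{p.9.1}, a representation of that $(n-1)$-quiver is identified with an $(n-1)$-representation. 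Concretely, combining Propositions \ref{p.9.1} and \ref{p.9} gives an equivalence of monoidal categories
\begin{equation}\label{eq.reduction}
Rep_{\!_{(\mathit{Q}_1,\mathit{Q}_2,\dots,\mathit{Q}_n)}} \;\simeq\; Rep_{\!_{(\mathscr{Q}_{\!_{(\mathit{Q}_1,\dots,\mathit{Q}_{n-1})}},\,\mathit{Q}_n)}},
\end{equation}
and one checks that this equivalence is (symmetric) monoidal, so it carries coalgebras to coalgebras and induces an equivalence $CoAlg(Rep_{\!_{(\mathit{Q}_1,\dots,\mathit{Q}_n)}}) \simeq CoAlg(Rep_{\!_{(\mathscr{Q}_{\!_{(\mathit{Q}_1,\dots,\mathit{Q}_{n-1})}},\mathit{Q}_n)}})$ compatible with the two forgetful functors.

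Next I would invoke Proposition \ref{p.14} applied to the single pair of quivers $(\mathscr{Q}_{\!_{(\mathit{Q}_1,\dots,\mathit{Q}_{n-1})}},\mathit{Q}_n)$: its hypotheses are about an arbitrary pair of finite quivers, so it applies verbatim, yielding that the forgetful functor $CoAlg(Rep_{\!_{(\mathscr{Q}_{\!_{(\mathit{Q}_1,\dots,\mathit{Q}_{n-1})}},\mathit{Q}_n)}}) \to Rep_{\!_{(\mathscr{Q}_{\!_{(\mathit{Q}_1,\dots,\mathit{Q}_{n-1})}},\mathit{Q}_n)}}$ has a right adjoint. Transporting along the equivalences above then gives the right adjoint to $\mathscr{U}_n$, and by the discussion preceding the statement (the equivalence between having cofree objects and the forgetful functor having a right adjoint) this is the same as saying $(Rep_{\!_{(\mathit{Q}_1,\dots,\mathit{Q}_n)}},\mathscr{U}_n)$ has cofree objects. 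Alternatively, and perhaps cleaner, one can bypass the induction entirely: by Proposition \ref{p.8.1} the category $Rep_{\!_{(\mathit{Q}_1,\dots,\mathit{Q}_n)}}$ is cocomplete, by Proposition \ref{p.11} it is co-wellpowered, and by Proposition \ref{p.12} it has a generating set; Proposition \ref{p.cowp} then shows $CoMon(Rep_{\!_{(\mathit{Q}_1,\dots,\mathit{Q}_n)}})$ is cocomplete, co-wellpowered, and — using that a generating set for $\mathcal{C}$ together with the finitely generated comonoids furnishes one for $CoMon(\mathcal{C})$, exactly as in the $n=2$ case via \cite{Abdulwahid} — has a generating set, whereupon Theorem \ref{p.SAFT} applied to $\mathscr{U}_n$ (which is cocontinuous by Proposition \ref{p.cowp}(i)) produces the right adjoint directly.

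I would present the inductive argument as the main line, since it is the shortest and the paper has already set up every ingredient. The one point requiring genuine care — the main obstacle — is verifying that the equivalence \eqref{eq.reduction} is monoidal, i.e. that the identification of an $n$-representation with a birepresentation of $(\mathscr{Q}_{\!_{(\mathit{Q}_1,\dots,\mathit{Q}_{n-1})}},\mathit{Q}_n)$ intertwines the tensor products defined in \eqref{diag.eq29}; this amounts to unwinding that the componentwise tensor of the compatibility maps $\psi_1,\dots,\psi_{n-1}$ matches, under the gluing functor $\mathcal{G}$ of Proposition \ref{p.9}, the tensor of the single family of compatibility maps of the corresponding birepresentation. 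This is a bookkeeping verification with no conceptual content, and once it is in place the conclusion follows formally. I would therefore state the monoidality as a short lemma (or fold it into the proof of Proposition \ref{p.9.1}) and then give the two-line deduction above.
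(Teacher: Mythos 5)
Your main inductive line rests on an identification that is not true. A birepresentation of the pair $(\mathscr{Q}_{\!_{(\mathit{Q}_1,\dots,\mathit{Q}_{n-1})}},\mathit{Q}_n)$ consists, by Definition \ref{def.1}, of a representation of the glued quiver, a representation of $\mathit{Q}_n$, and a compatibility map $\psi^{\alpha}_{\beta}$ for \emph{every} pair $(\alpha,\beta)$ with $\alpha$ an arrow of $\mathscr{Q}_{\!_{(\mathit{Q}_1,\dots,\mathit{Q}_{n-1})}}$ and $\beta$ an arrow of $\mathit{Q}_n$; since the arrow set of the $(n-1)$-quiver is $\mathrm{Q}^{(1)}_{\!_{1}}\sqcup\dots\sqcup\mathrm{Q}^{(n-1)}_{\!_{1}}$ together with all the connecting arrows $\varrho$, such a birepresentation carries maps from targets of arrows of $\mathit{Q}_1,\dots,\mathit{Q}_{n-2}$ (and of the connecting arrows) into $V^{(n)}$, which an $n$-representation simply does not have: there $\psi_{n-1}$ is indexed only by $\mathrm{Q}^{(n-1)}_{\!_{1}}\times\mathrm{Q}^{(n)}_{\!_{1}}$. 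So your displayed equivalence is false; what exists is only a fully faithful (non-essentially-surjective) embedding of $Rep_{\!_{(\mathit{Q}_1,\dots,\mathit{Q}_n)}}$ into $Rep_{\!_{(\mathscr{Q}_{\!_{(\mathit{Q}_1,\dots,\mathit{Q}_{n-1})}},\mathit{Q}_n)}}$ obtained by setting the extra maps to zero, and a right adjoint cannot be transported along such an embedding (the cofree object over an object of the subcategory, computed in the larger category, need not lie in the subcategory). For the same reason the $2$-quiver induced by $(\mathscr{Q}_{\!_{(\mathit{Q}_1,\dots,\mathit{Q}_{n-1})}},\mathit{Q}_n)$ is not the $n$-quiver $\mathscr{Q}_{\!_{(\mathit{Q}_1,\dots,\mathit{Q}_n)}}$ of Definition \ref{def.5}, so Proposition \ref{p.9.1} does not rescue the reduction. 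The monoidality verification you flag as the ``main obstacle'' is therefore moot: the obstacle is the equivalence itself.

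Your alternative route, by contrast, is correct and is in substance the paper's own argument: the paper proves the $n=2$ case (Proposition \ref{p.14}) by combining the generating set of finitely generated module coalgebras for $CoAlg(Mod\,k\mathscr{Q}_{\!_{(\mathit{Q},\mathit{Q}')}})$ from the cited reference with Proposition \ref{p.cowp} and Theorem \ref{p.SAFT}, and then disposes of general $n$ by appeal to Remark \ref{r.inductive}. Running SAFT directly on $CoAlg(Rep_{\!_{(\mathit{Q}_1,\dots,\mathit{Q}_n)}})$ — cocompleteness and cocontinuity of $\mathscr{U}_n$ from Propositions \ref{p.8.1} and \ref{p.cowp}(i), co-wellpoweredness from Proposition \ref{p.11} and \ref{p.cowp}(ii), and the generating set via the identification $Rep_{\!_{(\mathit{Q}_1,\dots,\mathit{Q}_n)}}\simeq Mod\,k\mathscr{Q}_{\!_{(\mathit{Q}_1,\dots,\mathit{Q}_n)}}$ of Proposition \ref{p.10.1} together with the same citation — is exactly the honest general-$n$ version of that proof. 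I would make this your main (indeed only) line and drop the inductive reduction.
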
 

\begin{proof}

\end{proof}

\begin{remark} \label{r.cofree.birep}
If the forgetful functor $\mathscr{U}: CoAlg(Rep_{\!_{(\mathit{Q},\mathit{Q}')}})\rightarrow Rep_{\!_{(\mathit{Q},\mathit{Q}')}}$  has a right adjoint $\mathscr{V}$ and $\bar{M} \in Rep_{\!_{(\mathit{Q},\mathit{Q}')}}$, then the cofree coalgebra over $\bar{M}$ can be given by 
$$\mathscr{V}(\bar{M})=\lim\limits_{\stackrel{\longrightarrow}{[\bar{f}:\bar{G}\rightarrow \bar{M}]\in Rep_{\!_{(\mathit{Q},\mathit{Q}')}},\, \bar{G} \in CoAlg(Rep_{\!_{(\mathit{Q},\mathit{Q}')}})},\, \bar{G} \in rep_{\!_{(\mathit{Q},\mathit{Q}')}}}\bar{G}.$$  Thus, the concrete category $( CoAlg(Rep_{\!_{(\mathit{Q},\mathit{Q}')}}), \mathscr{U})$ has cofree objects given in terms of colimits and generators. \\

It follows that, for any $n \geq 2$, if the forgetful functor $\mathscr{U}_n: CoAlg(Rep_{\!_{(\mathit{Q}_1,\mathit{Q}_2,...,\mathit{Q}_n)}})\rightarrow Rep_{\!_{(\mathit{Q}_1,\mathit{Q}_2,...,\mathit{Q}_n)}}$  has a right adjoint $\mathscr{V}_n$ and $\bar{M} \in Rep_{\!_{(\mathit{Q}_1,\mathit{Q}_2,...,\mathit{Q}_n)}}$, then the cofree coalgebra over $\bar{M}$ can be given by 
$$\mathscr{V}_n(\bar{M})=\lim\limits_{\stackrel{\longrightarrow}{[\bar{f}:\bar{G}\rightarrow \bar{M}]\in Rep_{\!_{(\mathit{Q}_1,\mathit{Q}_2,...,\mathit{Q}_n)}},\, \bar{G} \in CoAlg(Rep_{\!_{(\mathit{Q}_1,\mathit{Q}_2,...,\mathit{Q}_n)}})},\, \bar{G} \in rep_{\!_{(\mathit{Q}_1,\mathit{Q}_2,...,\mathit{Q}_n)}}}\bar{G}.$$ \\
\end{remark}

The following proposition explicitly describes colimits and cofree objects in the product categories.\\

\begin{proposition} \label{p.15}  
For any $m \in \{1, \,\, 2, \,\, 3, \,\, . . ., \,\, n\}$, let $\mathcal{U}_m: \mathscr{A}_m  \rightarrow  \mathscr{X}_m$ be a forgetful functor  with a right adjoint $\mathcal{V}_m$. Then the product functor  $\mathcal{V}_1 \times \mathcal{V}_2 \times . . . \times \mathcal{V}_n  : \mathscr{X}_1 \times \mathscr{X}_2 \times . . . \times \mathscr{X}_n  \rightarrow  \mathscr{A}_1 \times \mathscr{A}_2 \times . . . \times \mathscr{A}_n $ is a right adjoint of the obvious forgetful functor $\mathcal{U}_1 \times \mathcal{U}_2 \times . . . \times \mathcal{U}_n:   \mathscr{A}_1 \times \mathscr{A}_2 \times . . . \times \mathscr{A}_n  \rightarrow \mathscr{X}_1 \times \mathscr{X}_2 \times . . . \times \mathscr{X}_n $. Furthermore, for any $(X_1,X_2, . . ., X_n) \in \mathscr{X}_1 \times \mathscr{X}_2 \times . . . \times \mathcal{X}_n $, the cofree object over $(X_1,X_2, . . ., X_n)$ is exactly $(\mathcal{V}_1(X_1) \times \mathcal{V}_2(X_2) \times . . . \times \mathcal{V}_n(X_n))$. \\
\end{proposition}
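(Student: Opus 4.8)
The plan is to verify the adjunction $(\mathcal{U}_1 \times \cdots \times \mathcal{U}_n) \dashv (\mathcal{V}_1 \times \cdots \times \mathcal{V}_n)$ directly from the universal property, since products of categories behave componentwise on hom-sets. Concretely, for objects $(A_1,\dots,A_n) \in \mathscr{A}_1 \times \cdots \times \mathscr{A}_n$ and $(X_1,\dots,X_n) \in \mathscr{X}_1 \times \cdots \times \mathscr{X}_n$, one has
\begin{equation*}
(\mathscr{X}_1 \times \cdots \times \mathscr{X}_n)\bigl((\mathcal{U}_1 A_1,\dots,\mathcal{U}_n A_n),(X_1,\dots,X_n)\bigr) \;\cong\; \prod_{m=1}^{n} \mathscr{X}_m(\mathcal{U}_m A_m, X_m),
\end{equation*}
because a morphism in a product category is precisely a tuple of morphisms, one in each factor, composed componentwise. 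Applying the adjunction $\mathcal{U}_m \dashv \mathcal{V}_m$ in each factor turns the right-hand side into $\prod_{m=1}^n \mathscr{A}_m(A_m, \mathcal{V}_m X_m)$, which is again $(\mathscr{A}_1 \times \cdots \times \mathscr{A}_n)\bigl((A_1,\dots,A_n),(\mathcal{V}_1 X_1,\dots,\mathcal{V}_n X_n)\bigr)$. One then checks this chain of bijections is natural in both variables, which is immediate since each step is natural in its factor and naturality in a product category is tested factorwise.

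First I would record the componentwise description of morphisms and composition in a product category (this is already quoted from \cite{McLarty} in Section~\ref{s.p}), together with the componentwise action of the product functors $\mathcal{U}_1 \times \cdots \times \mathcal{U}_n$ and $\mathcal{V}_1 \times \cdots \times \mathcal{V}_n$. Next I would assemble the hom-set bijection displayed above and invoke the hom-set characterization of adjunctions (e.g.\ \cite[Chapter IV]{Mac Lane1}); alternatively, one can exhibit the unit and counit of the product adjunction as the tuples $(\eta_1,\dots,\eta_n)$ and $(\varepsilon_1,\dots,\varepsilon_n)$ of the componentwise units and counits and verify the triangle identities, which hold because they hold in each factor and both sides of each triangle identity are computed componentwise. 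Either route is essentially bookkeeping once the componentwise structure is in place.

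For the second assertion, I would simply translate the adjunction into the language of cofree objects: since $\mathcal{V}_1 \times \cdots \times \mathcal{V}_n$ is right adjoint to the forgetful functor $\mathcal{U}_1 \times \cdots \times \mathcal{U}_n$, the remark recalled before Theorem~\ref{p.SAFT} (that a concrete category has cofree objects iff the forgetful functor has a right adjoint) shows that the cofree object over $(X_1,\dots,X_n)$ is $(\mathcal{V}_1 \times \cdots \times \mathcal{V}_n)(X_1,\dots,X_n) = (\mathcal{V}_1(X_1),\dots,\mathcal{V}_n(X_n))$, with co-universal arrow the tuple of the co-universal arrows in each factor.

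I do not expect a genuine obstacle here; the statement is a soft, formal fact about products of adjunctions. The only mild care needed is to phrase naturality correctly over a product category — making explicit that naturality squares in $\mathscr{A}_1 \times \cdots \times \mathscr{A}_n$ (resp.\ $\mathscr{X}_1 \times \cdots \times \mathscr{X}_n$) commute if and only if their componentwise projections commute — and to be consistent about which functor is ``forgetful'' so that the direction of the adjunction matches the cofree-object convention used elsewhere in the paper. Everything else is routine.
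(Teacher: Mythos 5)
Your argument is correct, and it supplies exactly the routine componentwise verification that the paper leaves implicit: the paper's own proof of this proposition consists only of the remark that it is straightforward. Your hom-set (or unit/counit) bookkeeping and the translation into cofree objects via the right-adjoint characterization are precisely the intended content, so there is nothing to add.
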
 

\begin{proof} 
 The proof is straightforward.\\
 
\end{proof}

For any $m \in \{1, \,\, 2, \,\, . . ., \,\, n\}$, let $\mathcal{U}_m: CoAlg(Rep_k(\mathit{Q}_m))  \rightarrow  Rep_k(\mathit{Q}_m)$ be the obvious  forgetful functor with a right adjoint $\mathcal{V}_m$. Proposition (\ref{p.15}) and Remark (\ref{r.inductive}) suggest the following immediate consequence.\\

\begin{corollary} \label{p.16}  
The product functor  $\mathcal{V}_1 \times \mathcal{V}_2 \times . . . \times \mathcal{V}_n  : Rep_k(\mathit{Q}_1) \times Rep_k(\mathit{Q}_2)\times . . . \times Rep_k(\mathit{Q}_n)  \rightarrow   CoAlg(Rep_k(\mathit{Q}_1)) \times CoAlg(Rep_k(\mathit{Q}_2))\times . . . \times CoAlg(Rep_k(\mathit{Q}_n)) $ is a right adjoint of the obvious product forgetful functor $\mathcal{U}_1 \times \mathcal{U}_2 \times . . . \times \mathcal{U}_n:   CoAlg(Rep_k(\mathit{Q}_1)) \times CoAlg(Rep_k(\mathit{Q}_2))\times . . . \times CoAlg(Rep_k(\mathit{Q}_n))   \rightarrow Rep_k(\mathit{Q}_1) \times Rep_k(\mathit{Q}_2)\times . . . \times Rep_k(\mathit{Q}_n) $. \\ 
Furthermore, for any $(M_1,M_2,...,M_n) \in  Rep_k(\mathit{Q}_1) \times Rep_k(\mathit{Q}_2)\times . . . \times Rep_k(\mathit{Q}_n)$, the cofree object over $(M_1,M_2,...,M_n)$ is exactly $(\mathcal{V}_1(M_1)$, $\mathcal{V}_2(M_2)$,...,$\mathcal{V}_n(M_n))$. \\
\end{corollary}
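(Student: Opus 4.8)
The plan is to deduce this corollary directly from Proposition~\ref{p.15} by taking the ambient categories there to be the ones that arise from quiver representations and their coalgebras. Concretely, for each $m \in \{1,2,\ldots,n\}$ I set $\mathscr{A}_m = CoAlg(Rep_k(\mathit{Q}_m))$ and $\mathscr{X}_m = Rep_k(\mathit{Q}_m)$, and I let $\mathcal{U}_m$ be the obvious forgetful functor, which by hypothesis (and by the general theory: $Rep_k(\mathit{Q}_m) \simeq Mod\,k\mathit{Q}_m$ is cocomplete, co-wellpowered and has a generating set, so Proposition~\ref{p.cowp} and Theorem~\ref{p.SAFT} apply) admits a right adjoint $\mathcal{V}_m$. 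This is exactly the setup of Proposition~\ref{p.15}.

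Next I invoke Proposition~\ref{p.15} verbatim: it tells me that $\mathcal{V}_1 \times \mathcal{V}_2 \times \cdots \times \mathcal{V}_n$ is right adjoint to the product forgetful functor $\mathcal{U}_1 \times \mathcal{U}_2 \times \cdots \times \mathcal{U}_n$, and that for any object $(M_1, M_2, \ldots, M_n)$ of the product category the cofree object over it is $(\mathcal{V}_1(M_1), \mathcal{V}_2(M_2), \ldots, \mathcal{V}_n(M_n))$. Substituting the chosen categories into that statement yields precisely the assertion of the corollary, so the proof is essentially a matter of naming. The only thing worth spelling out is that the forgetful functor on the product of the coalgebra categories genuinely is the componentwise product of the individual forgetful functors $\mathcal{U}_m$, which is immediate from the definition of the product category (composition and identities are componentwise, as recalled in Section~\ref{s.p}), together with the fact that a coalgebra in $Rep_k(\mathit{Q}_1) \times \cdots \times Rep_k(\mathit{Q}_n)$ is the same datum as an $n$-tuple of coalgebras, one in each factor, since the monoidal structure on the product is componentwise.

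The appeal to Remark~\ref{r.inductive} in the statement is only to justify that each $\mathcal{U}_m$ does have a right adjoint $\mathcal{V}_m$ to begin with — i.e.\ to guarantee the hypotheses of Proposition~\ref{p.15} are met — rather than to perform any inductive construction; the product statement itself is a single, non-inductive application. I expect no genuine obstacle here: the one subtle point is the identification $CoAlg\bigl(\prod_m Rep_k(\mathit{Q}_m)\bigr) \cong \prod_m CoAlg(Rep_k(\mathit{Q}_m))$ compatibly with the forgetful functors, and this follows formally because limits, colimits, and the monoidal product in a finite product of categories are all computed componentwise. Hence the proof reads: \emph{Apply Proposition~\ref{p.15} with $\mathscr{A}_m = CoAlg(Rep_k(\mathit{Q}_m))$, $\mathscr{X}_m = Rep_k(\mathit{Q}_m)$, $\mathcal{U}_m$ the forgetful functor and $\mathcal{V}_m$ its right adjoint.} $\Box$
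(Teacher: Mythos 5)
Your proposal is correct and matches the paper's intent exactly: the paper presents this corollary as an immediate specialization of Proposition (\ref{p.15}) (with Remark (\ref{r.inductive}) only ensuring the hypotheses, i.e.\ that each $\mathcal{U}_m$ has a right adjoint $\mathcal{V}_m$), and gives no further argument. Your additional remark identifying coalgebras in the product category with tuples of coalgebras is not needed for the statement as written, but it is harmless and does not affect the correctness of the proof.
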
 

\begin{proof}
 
\end{proof}

For any $m \in \{1, \,\,  2, \,\, . . ., \,\, n\}$, let $\mathcal{U}_m: CoAlg(Rep_k(\mathit{Q}_m))  \rightarrow  Rep_k(\mathit{Q}_m)$ be the obvious  forgetful functor with a right adjoint $\mathcal{V}_m$. By Proposition (\ref{p.12}), for any $n \geq 2$, the category $Rep_{\!_{(\mathit{Q}_1,\mathit{Q}_2,...,\mathit{Q}_n)}}$ has a generating set $\mathscr{G}$. For any $m \in \{1, \,\, 2, \,\, 3, \,\, . . ., \,\, n\}$, let $\mathcal{G}_m$ be a generating set of $Rep_k(\mathit{Q}_m)$. The definition of $n$-representations of quivers implies that each element of $\mathscr{G}$ takes the form $(G_1,G_2,...,G_n,\chi_1,\chi_2, ..., \chi_{n-1})$ where $G_m \in \mathcal{G}_m$ for every  $m \in \{1, \,\, 2, \,\, 3, \,\, . . ., \,\, n\}$.  Furthermore, Remark (\ref{r.cofree.birep}) and Proposition  (\ref{p.8}) imply the following proposition which gives an explicit description for cofree coalgebras in the concrete category $(Rep_{\!_{(\mathit{Q}_1,\mathit{Q}_2,...,\mathit{Q}_n)}} , \mathscr{U})$, where 
 ${\mathscr{U}}: CoAlg(Rep_{\!_{(\mathit{Q}_1,\mathit{Q}_2,...,\mathit{Q}_n)}})  \rightarrow  Rep_{\!_{(\mathit{Q}_1,\mathit{Q}_2,...,\mathit{Q}_n)}} $ is the obvious forgetful functor with a right adjoint $\mathscr{V}$. \\

\begin{proposition} \label{p.17}  
Let $\bar{M} = (M^{(1)},M^{(2)},...,M^{(n)},\psi_{\!_{1}}, \psi_{\!_{2}}, ..., \psi_{\!_{n-1}}) \in Rep_{\!_{(\mathit{Q}_1,\mathit{Q}_2,...,\mathit{Q}_n)}}$. Then the cofree object over $\bar{M}$ is exactly $(\mathcal{V}_1(M^{(1)}),\mathcal{V}_2(M^{(2)}), . . ., \mathcal{V}_n(M^{(n)}),{\psi}'_{\!_{1}}, {\psi}'_{\!_{2}}, ..., {\psi}'_{\!_{n-1}})$,  for some unique $k$-linear maps ${\psi}'_{\!_{1}}, {\psi}'_{\!_{2}}, ..., {\psi}'_{\!_{n-1}}$. \\
\end{proposition}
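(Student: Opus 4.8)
The plan is to combine the cofree-object description from Remark \ref{r.cofree.birep} with the explicit colimit construction from Proposition \ref{p.8} (and its inductive extension Proposition \ref{p.8.1}). By Proposition \ref{p.14.1} the forgetful functor $\mathscr{U}_n$ has a right adjoint $\mathscr{V}$, so the cofree coalgebra $\mathscr{V}(\bar{M})$ over $\bar{M}$ exists and, by Remark \ref{r.cofree.birep}, is realized as the colimit
$$\mathscr{V}(\bar{M})=\lim\limits_{\stackrel{\longrightarrow}{[\bar{f}:\bar{G}\rightarrow \bar{M}]\in Rep_{\!_{(\mathit{Q}_1,\mathit{Q}_2,...,\mathit{Q}_n)}},\, \bar{G} \in CoAlg(Rep_{\!_{(\mathit{Q}_1,\mathit{Q}_2,...,\mathit{Q}_n)}})},\, \bar{G} \in rep_{\!_{(\mathit{Q}_1,\mathit{Q}_2,...,\mathit{Q}_n)}}}\bar{G}$$
over the (essentially small) diagram of finite-dimensional coalgebra maps into $\bar{M}$.

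First I would apply Proposition \ref{p.8.1}: the colimit on the right is computed componentwise, i.e. the underlying object of $\mathscr{V}(\bar{M})$ in $Rep_k(\mathit{Q}_1)\times\cdots\times Rep_k(\mathit{Q}_n)$ is the product of the colimits of the $m$-th components $\bar{G}\mapsto G^{(m)}$ of the diagram, together with the unique family of compatibility maps $\psi'_{\!_{1}},\dots,\psi'_{\!_{n-1}}$ produced by the universal property of colimits in $Vect_k$ exactly as in the proof of Proposition \ref{p.8}. So it remains to identify the $m$-th component colimit with $\mathcal{V}_m(M^{(m)})$. For this I would use the fact (Corollary \ref{p.16}, together with the generating-set description preceding Proposition \ref{p.17}) that the diagram indexing $\mathscr{V}(\bar{M})$ projects, via $\bar{G}=(G^{(1)},\dots,G^{(n)},\chi_1,\dots,\chi_{n-1})\mapsto G^{(m)}$, cofinally onto the diagram of finite-dimensional coalgebra maps $G^{(m)}\rightarrow M^{(m)}$ in $Rep_k(\mathit{Q}_m)$, whose colimit is precisely $\mathcal{V}_m(M^{(m)})$ by the analogue of Remark \ref{r.cofree.birep} for the category $Rep_k(\mathit{Q}_m)$. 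Running $m$ from $1$ to $n$ then yields that the underlying $n$-tuple of $\mathscr{V}(\bar{M})$ is $(\mathcal{V}_1(M^{(1)}),\dots,\mathcal{V}_n(M^{(n)}))$, and the $\psi'_{\!_{m}}$ are the unique maps making each $\bar{\zeta}^{\bar{G}}$ a morphism of $n$-representations, which is the claimed form.

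The main obstacle I expect is the cofinality argument for the component projections: one must check that for a fixed finite-dimensional coalgebra map $g:G^{(m)}\rightarrow M^{(m)}$ there is at least one (and the poset of such is connected) coalgebra morphism $\bar{G}\rightarrow\bar{M}$ of $n$-representations whose $m$-th component is $g$ — the obvious candidate being the $n$-representation that is $G^{(m)}$ in slot $m$, is $0$ (or a chosen finite-dimensional subcoalgebra of $M^{(j)}$) in the other slots, and has all compatibility maps $\chi$ determined by $\psi_{\!_{m}}$ and the zero maps, using that $Rep_{\!_{(\mathit{Q}_1,\dots,\mathit{Q}_n)}}$ is abelian (Theorem \ref{thm.Abelian2}) so such subobjects and the induced $\psi$-structure behave well. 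Once cofinality is in place, everything else is the routine bookkeeping already carried out in full in the proof of Proposition \ref{p.8}, so I would simply invoke that proof mutatis mutandis rather than repeat it. Finally, the uniqueness of the $\psi'_{\!_{m}}$ is immediate from the uniqueness clause in the universal property of colimits in $Vect_k$, exactly as in Propositions \ref{p.7} and \ref{p.8}.
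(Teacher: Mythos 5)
Your skeleton is the same as the paper's: invoke Proposition (\ref{p.14.1}) and Remark (\ref{r.cofree.birep}) to write the cofree coalgebra $\mathscr{V}(\bar{M})$ as the colimit over finite-dimensional coalgebra maps into $\bar{M}$, compute that colimit componentwise via Proposition (\ref{p.8.1}) with the $\psi'_{\!_{m}}$ supplied uniquely as in the proof of Proposition (\ref{p.8}), and then identify the $m$-th component colimit with $\mathcal{V}_m(M^{(m)})$. The paper's proof simply writes this last identification as an equality of colimits over two different index diagrams, with no justification; you correctly isolate it as the point that actually needs an argument and propose finality (cofinality) of the component projection functors, which is the right thing to worry about.

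However, your proposed witness for nonemptiness of the comma categories does not work as stated. If $\bar{G}$ is taken to be $G^{(m)}$ in slot $m$, zero in the other slots, with all compatibility maps $\chi$ equal to zero, then a tuple $(0,\dots,g,\dots,0)$ is a morphism $\bar{G}\rightarrow\bar{M}$ only when the square (\ref{diag.eq010.1}) at level $m+1$ holds, and that square reads $\psi_{\!_{m+1}}\circ g_{t(\gamma^{(m)})}=0$ for all relevant pairs of arrows; since $\psi_{\!_{m+1}}$ and $g$ are arbitrary, this fails in general (it is only automatic when $m=n$). Your fallback, ``a chosen finite-dimensional subcoalgebra of $M^{(j)}$,'' is not meaningful: $\bar{M}$ is merely an object of $Rep_{\!_{(\mathit{Q}_1,\mathit{Q}_2,...,\mathit{Q}_n)}}$, not a coalgebra, so its slots carry no coalgebra structure from which to take subcoalgebras. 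To make the finality argument go through you would have to produce, for each $g:G^{(m)}\rightarrow M^{(m)}$, finite-dimensional coalgebras in the later slots together with maps to the $M^{(j)}$ through which $\psi_{\!_{m+1}}\circ g_{t}$ factors compatibly with the comultiplications (and with the counits, if coalgebras here are counital, in which case the one-slot object is not even a counital coalgebra for the natural unit object), and then verify connectedness of the comma categories. None of this is addressed in the paper either — its proof asserts the componentwise formula outright — but as written your cofinality step does not close the gap you yourself flagged, so the proposal is incomplete exactly at its load-bearing point.
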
 

\begin{proof}
This can be proved by applying Remark (\ref{r.cofree.birep}) and Proposition (\ref{p.8}). Explicitly, if  $\bar{G} =  (G^{(1)},G^{(2)},...,G^{(n)},\chi_1,\chi_2, ..., \chi_{n-1})$, , $\bar{f} = (f^{(1)},f^{(2)}),...,f^{(n)})$, we have the following.

\begin{tabular}{lllll}
 $\mathscr{V}(\bar{M})$ &  $=\lim\limits_{\stackrel{\longrightarrow}{[\bar{f}:\bar{G} \rightarrow \bar{M}]\in Rep_{\!_{(\mathit{Q}_1,\mathit{Q}_2,...,\mathit{Q}_n)}},\,\bar{G} \in CoAlg(Rep_{\!_{(\mathit{Q}_1,\mathit{Q}_2,...,\mathit{Q}_n)}})},\,\bar{G} \in rep_{\!_{(\mathit{Q}_1,\mathit{Q}_2,...,\mathit{Q}_n)}}} \bar{G}, \,\,\, $\\
 & \\ 
   & $= (\lim\limits_{\stackrel{\longrightarrow}{[f^{(1)}:G^{(1)} \rightarrow M^{(1)}]\in Rep_k(\mathit{Q}_1),\,G^{(1)} \in CoAlg(Rep_k(\mathit{Q}_1))},\,G^{(1)} \in rep_k(\mathit{Q}_1)} G^{(1)}, $ \\
  &   \\
  & $\,\,\,\,\,\,\,\,\,\,\,\, \,\,\,\,\,\,\,\,\,\,\,\, \,\,\,\,\,\,\,\,\,\,\,\,\,\,\,\,\,\,\,\,\,\,\,\,. \,\,\,\,\,\,\,\,\,\,\,\,\,\,\,\,\,\,\,\,\,\,\,\,\,\,\,\,\,\,\,\,\,\,\,\, . \,\,\,\,\,\,\,\,\,\,\,\,\,\,\,\,\,\,\,\,\,\,\,\,\,\,\,\,\,\,\,\,\,\,\,\,. \,\,\,\,\,\,\,\,\,\,\,\,\,\,\,\,\,\,\,\,\,\,\,\, \,\,\,\,\,\,\,\,\,\,\,\,,$ \\
  & \\
  &  $ \lim\limits_{\stackrel{\longrightarrow}{[f^{(n)}:G^{(n)} \rightarrow M^{(n)}]\in Rep_k(\mathit{Q}_n),\,G^{(n)} \in CoAlg(Rep_k(\mathit{Q}_n))},\,G^{(n)} \in rep_k(\mathit{Q}_n)} G^{(n)}, \,\, {\psi}'_{\!_{1}}, {\psi}'_{\!_{2}}, ..., {\psi}'_{\!_{n-1}})$ \\
  &   \\
   &   \\
  &   $ = (\mathcal{V}_1(M^{(1)}),\mathcal{V}_2(M^{(2)}), . . ., \mathcal{V}_n(M^{(n)}),{\psi}'_{\!_{1}}, {\psi}'_{\!_{2}}, ..., {\psi}'_{\!_{n-1}})$, \\
  
  &   \\
 
 \end{tabular}\\
 
for some unique $k$-linear maps ${\psi}'_{\!_{1}}, {\psi}'_{\!_{2}}, ..., {\psi}'_{\!_{n-1}}$. \\

\end{proof}
 
 \vspace{.2cm}
 
We end this paper by pointing out that the universal investigation above can be adjusted to study cofree objects in the centralizer and the center categories of $Rep_{\!_{(\mathit{Q}_1,\mathit{Q}_2,...,\mathit{Q}_n)}}$. Indeed, one can use \cite{Abdulwahid0} to study and describe them in terms of cofree objects in centralizer and  center categories of $Rep_k(\mathit{Q}_j)$,  $j \in \{1, \,\,  2, \,\, . . ., \,\, n\}$.

 \vspace{.5cm}

\begin{center}
• \textbf{Acknowledgment}
\end{center}
I would like to thank my adivsor Prof. Miodrag Iovanov, who I learned a lot from him,  for his support and his unremitting encouragement.\\

 \vspace{.5cm}

\vspace*{3mm} 
\begin{flushright}
\begin{minipage}{148mm}\sc\footnotesize

Adnan Hashim Abdulwahid\\
University of Iowa, \\
Department of Mathematics, MacLean Hall\\
Iowa City, IA, USA

{\tt \begin{tabular}{lllll}
{\it E--mail address} : &   {\color{blue} adnan-al-khafaji@uiowa.edu}\\
&  {\color{blue} adnanalgebra@gmail.com}\\
\end{tabular} }\vspace*{3mm}
\end{minipage}
\end{flushright}


\begin{thebibliography}{}
\bibitem{Abdulwahid0} Adnan H. Abdulwahid. {\it Cofree Objects in The Centralizer and The Center Categories}. (2016). Preprint
available at ArXiv.org:math.CT/1603.02386. http://arxiv.org/pdf/1603.02386v3.pdf.

\bibitem{Abdulwahid} Adnan H. Abdulwahid,  Miodrag C. Iovanov. {\it Generators for Comonoids and Universal Constructions}. Arch. Math. 106 (2016), 21-–33, $\copyright$ 2015 Springer International Publishing, published online December 11, 2015, DOI 10.1007/s00013-015-0826-6.

\bibitem{Abdulwahid1} Adnan H. Abdulwahid. {\it $n$-representations of Quivers}. (2016). Preprint available at ArXiv.org: math.RT/1604.05512.  http://arxiv.org/pdf/1604.05512v4.pdf.

\bibitem{Adamek} J. Adamek , H. Herrlich,  G. Strecker. {\it Abstract and Concrete Categories: The Joy of Cats}. Dover Publication. 2009.

\bibitem{Assem} Ibrahim Assem, Andrzej Skowronski, Daniel Simson. {\it Elements of the Representation Theory of Associative Algebras}. Volume 1: Techniques of Representation Theory. London Mathematical Society Student Texts 65, Cambridge University Press, Cambridge, 2006.

\bibitem{Auslander1} M. Auslander, Idun Reiten, S.O. Smal{\o}.  {\it Representation Theory of Artin Algebras}. Cambridge Studies in Advanced Mathematics 36, Cambridge University Press, Cambridge. 1995.


\bibitem{Awodey} Steve Awodey. {\it Category Theory}. Oxford University Press, 2nd ed., 2010.

\bibitem{Bakalov} B. Bakalov, A. Kirillov Jr.. {\it Lectures on Tensor Categories and Modular Functor}. AMS University Lecture Series, 2001, 221 pp.

\bibitem{Barot} Michael Barot. {\it Introduction to The Representation Theory of Algebras}.  $\copyright$ Springer International Publishing Switzerland. 2015.

\bibitem{Benson} D. J. Benson. {\it Representations and cohomology I: Basic representation theory of finite groups and associative algebras}.  Cambridge Stud. Adv. Math. 30, Cambridge University Press, 1991.
 
\bibitem{Borceux1} F. Borceux. {\it Handbook of Categorical Algebra 1: Basic Category Theory}. Cambridge University Press. 1994.

\bibitem{Borceux2} F. Borceux. {\it Handbook of Categorical Algebra 2: Categories and Structures}. Cambridge University Press. 1994.

\bibitem{Etingof} Pavel Etingof, Shlomo Gelaki, Dmitri Nikshych, Victor Ostrik. {\it Tensor Categories}. Mathematical Surveys and Monographs 205, American Mathematical Society, 2015.

\bibitem{Etingof1} Pavel Etingof, Oleg Golberg, Sebastian Hensel
Tiankai Liu, Alex Schwendner, Dmitry Vaintrob, Elena Yudovina. {\it Introduction to Representation Theory}. Volume 59, American Mathematical Society, RI, 2011.

\bibitem{Freyd} P.J. Freyd, A. Scedrov. {\it Categories, Allegories}. Elsevier Science Publishing Company, Inc. 1990.

\bibitem{Gabriel} P. Gabriel. {\it Unzerlegbare Darstellungen I}. Manuscripta Math., 6 (1972), 71–103.


\bibitem{Kilp} Mati Kilp, Ulrich Knauer, Alexander V. Mikhalev . {\it Monoids, Acts, and Categories: With Applications to Wreath Products and Graphs}. De Gruyter exposition in math., Berlin 29. 2000.

\bibitem{Leinster1} Tom Leinster. {\it Basic Category Theory}. volume 298 of London Mathematical Society Lecture Note Series. Cambridge University Press, Cambridge, 2014.

\bibitem{Leinster2} Tom Leinster. {\it Higher Operads, Higher Categories}. volume 298 of London Mathematical Society Lecture Note Series. Cambridge University Press, Cambridge, 2004. 

\bibitem{Mac Lane1} S. Mac Lane. {\it Categories for the Working Mathematician}. Graduate Texts in Mathematics vol. 5, Springer-Verlag, New York, 2nd ed., 1998. 


\bibitem{McLarty} Colin McLarty. {\it Elementary Categories, Elementary Toposes}. Oxford University Press. New York. 2005.

\bibitem{Mitchell} B. Mitchell. {\it Theory of Categories}.  Academic Press, 1965.
 
\bibitem{Pareigis} B. Pareigis. {\it Categories and Functors}.  Academic Press, 1971.

\bibitem{Rotman} Joseph J. Rotman. {\it An Introduction to Homological Algebra}. 2nd Edition, Springer, New York, 2009.


\bibitem{Schiffler} Ralf Schiffler. {\it Quiver Representations}. CMS Books in Mathematics Series. Springer International Publishing. 2014.

\bibitem{Schubert} H. Schubert. {\it Categories}. Springer-Verlag, 1972.

\bibitem{Sergeichuk} V.V. Sergeichuk. {\it Linearization method in classification problems of linear algebra}. S$\tilde{a}$o Paulo J. Math. Sci.1(no. 2) (2007), 219–240.


\bibitem{Street} Ross Street. {\it Quantum Groups: a path to current algebra}. Number 19 in Australian Mathematical Society Lecture Series. Cambridge University Press, 2007.

\bibitem{Wisbauer} R. Wisbauer. {\it Foundations of Module and Ring Theory: A Handbook for Study and Research}. Springer-Verlag. 1991.

\bibitem{Zimmermann} Alexander Zimmermann. {\it Representation Theory: A Homological Algebra Point of View}. Springer Verlag London, 2014.

\end{thebibliography}
\end{document}